\newcommand{\dd}[2] {\ensuremath{\frac{\partial {#1}}{\partial {#2}}}}
\newcommand{\xx}{ \mbox{$\textbf{x}$}}
\newcommand{\uu}{ \mbox{$\textbf{u}$}}
\newcommand{\etz}{\mbox{$\epsilon \rightarrow 0 \: $}}
\newcommand{\uue}{ \mbox{$\textbf{u}^{\epsilon}$}}
\newcommand{\ueind}[1]{ \mbox{$u^{\epsilon}_{#1}$}}
\newcommand{\veind}[1]{ \mbox{$v^{\epsilon}_{#1}$}}
\newcommand{\uind}[1]{ \mbox{$u_{#1}$}}
\newcommand{\vind}[1]{ \mbox{$v_{#1}$}}
\newcommand{\qq}{ \mbox{$\mathbb{Q}$} }
\newcommand{\rr}{ \mbox{$\mathbb{R}$} }
\providecommand{\cspace}[2]{ \mbox{$C^{#1}_{#2}$}}
\providecommand{\cspaceof}[3]{ \mbox{$C^{#1}_{#2}\left(#3\right)$}}
\providecommand{\lspace}[2]{ \mbox{$L^{#1}_{#2}$}}
\providecommand{\hspace}[1]{  \mbox{$H^{#1}$}}
\providecommand{\wspace}[2]{  \mbox{$W^{#1}_{#2}$}}
\providecommand{\sspace}[1]{  \mbox{$\mathcal{S}^{#1}$}}
\newcommand{\supp}{ \mbox{$\mathrm{supp}$} }
\newcommand{\dist}{ \mbox{$\mathrm{dist}$} }
\newcommand{\osc}{ \mbox{$ \mathrm{osc}$} }
\newcommand{\D}{ \mbox{$\Delta$} }
\newcommand{\puccip}{\mbox{$\mathcal{M}^{+}$}}
\newcommand{\puccin}{\mbox{$\mathcal{M}^{-}$}}
\newcommand{\tr}{\mbox{$\mathrm{Tr}$}}
\newcommand{\Dij}{ \mbox{$D_{ij}$}}
\newcommand{\holder}{H\mbox{$\ddot{\mathrm{o}}$}lder  }
\providecommand{\diff}[1]{  \mbox{$\mathrm{d#1}$}  }
\newcommand{\dx}{ \mbox{$\mathrm{d\xx}$} }
\newcommand{\dy}{ \mbox{$\mathrm{dy}$} }
\providecommand{\abs}[1]{\left\vert#1\right\vert}
\providecommand{\norm}[2]{\left\Vert#1\right\Vert_{#2}}
\newtheorem{prop}{Proposition}[section]
\newtheorem{lemma}[prop]{Lemma}
\newtheorem{coro}[prop]{Corolary}
\newtheorem{theor}[prop]{Theorem}
\newtheorem{definition}[prop]{Definition}
\newtheorem{remark}[prop]{Remark}
\title[A free boundary problem arising from segregation of populations]{A free boundary problem arising from segregation of populations with high competition}
\author{Veronica Quitalo}
\date{\today}
\address{Department of Mathematics, The University of Texas at Austin, 1 University Station C1200, Austin, TX 78712} \email{vquitalo@math.utexas.edu}
\keywords {Fully nonlinear elliptic systems, Pucci operator, Regularity for viscosity solutions, Segregation of populations}
\subjclass[2010]{Primary: 35J60; Secondary: 35R35, 35B65, 35Q92}
\thanks{The author is supported by CoLab,  UT Austin PhD programs and UT Austin.}
\begin{document}

\maketitle

\begin{abstract}
 In this work, we show how to obtain a free boundary problem as the limit of a fully non linear elliptic system of equations that models population segregation (Gause-Lotka-Volterra type). We study the regularity of the solutions. In particular, we prove Lipschitz regularity across the free boundary. The problem is motivated by the work done by Caffarelli, Karakhanyan and Fang-Hua Lin for the linear case.
  \end{abstract}

\section{Introduction}

The problem we study in the present paper is motivated by the  Gause-Lotka-Voltera model of extinction or coexistence of species that live in the same territory, can diffuse, and have high competition rates.

Consider the equation, 
\begin{eqnarray*}
\dd{\uind{i}}{t} =\underbrace{ d_{i} \D\uind{i}}_{\mbox{diffusion term}} + R_{i}\uind{i}- a_{i}\uind{i}^{2}- \sum_{i \neq j} b_{ij}\uind{i} \uind{j} & \mbox{in} \: \Omega,
\end{eqnarray*}
which models populations of different species in competition, where
\begin{itemize}
\item[ ] $\uind{i} (x, t)$ is the density of the population $i$ at time $t$ and position $x;$ 
\item[ ] $R_{i}$ is the intrinsic rate of growth of species $i$;
\item[ ] $d_{i}$ is the diffusion rate for species $i$;
\item[ ] $a_{i}$ is a positive number that characterizes the intraspecies competition for the species $i$;
\item[ ] $b_{ij}$  is a positive number that characterizes the interspecies competition between the species $i $ and $j$.
 \end{itemize}

 In the papers  \cite{mimura_effect_1991, shigesada_effects_1984} this model was studied initially without diffusion. These papers studied how species can survive or get extinct with time, depending on the interactions among them.  Upon adding diffusion, Mimura, Ei  and Fang proved that  the existence of a stable solution depends on the shape of the domain and on the relations between the coefficients in the equation. The characterization for two species  has been proved to be easier, while the three species interactions remain to be  fully understood in these papers. 

In the sequence of papers by  Dancer and Du \cite{dancer_positive_1995-1, dancer_positive_1995, dancer_competing_1995}  the authors decided to first understand better the steady case (time independent) in order to obtain results for the parabolic problem. In these papers, one can find sufficient and necessary conditions for the existence of positive solutions $(\uind{1},\uind{2})$ and  $(\uind{1},\uind{2},\uind{3})$ with explicit conditions on the coefficients $R_{i}, a_{i}, b_{ij}$   for the following problem: 
\begin{equation}
\label{second model}
\left\{
\begin{split}
- & \D\uind{i} = R_{i}\uind{i}- a_{i}\uind{i}^{2}- \sum_{i \neq j} b_{ij}\uind{i}\uind{j} \quad \mbox{in}\:  \Omega,\\
&\uind{i}=0  \quad \mbox{on} \: \partial \Omega,\\
&\uind{i} > 0 \quad \mbox{ in}\:  \Omega,\\
\end{split}
\right.
\end{equation}
with $i=1,2$ and $i=1,2,3, $ respectively.

The  spatial segregation obtained in the limit as $b_{ij} \rightarrow \infty$  of the competition-diffusion system was  associated with a free boundary problem by Dancer, Hilhorst, Mimura, and Peletier  in \cite{dancer_spatial_1999} (i.e. in the case of high competition between the species). In  \cite{dancer_free_2003} 
the existence and uniqueness of the solution to Problem (\ref{second model}) with just two populations  has been  studied  using variational methods.

Later in \cite{conti_neharis_2002}, Conti, Terracini and Verzini proved that the limit problem is related with the  optimal partition problem in $N $ dimensional domains. Since then,  several papers by Conti, Felli, Terracini and Verzini \cite{conti_asymptotic_2005, conti_regularity_2005, conti_coexistence_2006, conti_minimal_2008, conti_variational_2003} studied with a general formulation, the existence, uniqueness and regularity  for the asymptotic limit of the following system,
\begin{equation*}
\left\{
\begin{split}
-&  \D\ueind{1} = f(\ueind{1})- \frac{1}{\epsilon} \ueind{1}\ueind{2}  \quad\mbox{in}\: \Omega,\\
-&  \D\ueind{2} = f(\ueind{2})-  \frac{1}{\epsilon} \ueind{1}\ueind{2} \quad \mbox{in} \:  \Omega,\\
&\uind{i}=\phi_{i}   \quad \mbox{on} \: \partial \Omega, \:i=1,2.\\
\end{split}
\right.
\end{equation*}
where $\phi_{i} (x)\phi_{j} (x) =0, $ for $i \neq j.$  In these papers, the existence of a limit pair of functions $(\uind{1}, \uind{2})$ such that $(\ueind{1}, \ueind{2}) \rightarrow (\uind{1}, \uind{2})$  when $\epsilon \rightarrow 0$   is shown  to have a tight connection with two different mathematical problems.  Namely, 
\begin{enumerate}
\item[a)] to find the solution of a free boundary problem characterized by the conditions:
\begin{eqnarray*}
\left\{
\begin{split}
- & \D\uind{i} = f(\uind{i})\chi_{\{\uind{i} >0\}} \quad i=1,2, \\
&\uind{i}(x)> 0 \quad \mbox{in}\:  \Omega, \:  i=1,2, \\
&\uind{1}(x)\,\uind{2}(x)=0 \quad  \mbox{in}\:  \Omega,\\
&\uind{i}=\phi_{i} \quad  \mbox{on} \: \partial \, \Omega,  \: i=1,2, \\
\end{split}
\right.
\end{eqnarray*}

\item[b)] to find the solution for a optimal partition problem. 
\end{enumerate} 

The existence and uniqueness of solution for a type of free boundary problem of the form
$$
\left\{
\begin{split}
- & \D u= f(u) \chi_{\{u >0\}} \\
&u(x)> 0 \quad \mbox{in}\:  \Omega,\\
&u=0  \quad  \mbox{on} \: \partial \Omega\\
\end{split}
\right.
$$
with $u$ bounded, was studied using variational methods by Dancer  \cite{dancer_uniqueness_2006}. 

Then the regularity of  solutions for the free boundary problem
\begin{equation}
\label{eq: caff problem}
\left\{
\begin{split}
&\D\ueind{i} = \frac{1}{\epsilon} \ueind{i} \sum_{i\neq j} \ueind{j} \quad  \mbox{in} \:  \Omega,  \:  i=1, \ldots, d,  \\
&\ueind{i}>0  \quad  \mbox{in} \:  \Omega, \:  i=1, \ldots, d,  \\
&\ueind{i}(x)=\phi_{i}(x)\geq 0 \quad  \mbox{on} \:  \partial \, \Omega, \:  i=1, \ldots, d, \\
&\phi_{i}\,\phi_{j}=0 \quad   \mbox{on} \:  \partial \, \Omega,\: i \neq j \:\\
\end{split}
\right.
\end{equation}
was studied   by Caffarelli, Karakhanyan and Lin in  \cite{caffarelli_singularly_2008, caffarelli_geometry_2009} with the  viscosity approach.   

More specifically, in  \cite{caffarelli_geometry_2009} the authors proved that  the singular perturbed elliptic system  (\ref{eq: caff problem})
has as limit, when $\epsilon \rightarrow 0, $ the following free boundary problem 
\begin{equation}
\label{eq: caff limit problem}
\left\{
\begin{split}
&\D\uind{i} =0 \quad \mbox{when } \uind{i}>0, \: i=1, \ldots, d, \\
&\D(\uind{i}-\sum_{i \neq j} \uind{j})\leq 0 \quad \mbox{in} \:  \Omega, \: i=1, \ldots, d,  \\
&\uind{i}(x)>0 \quad \mbox{in} \:  \Omega,  \: i=1, \ldots, d,  \\
&\uind{i}\,\uind{j}=0 \quad   \mbox{in} \: \Omega, \: i \neq j, \\
&\uind{i}=\phi_{i} \quad   \mbox{on} \: \partial \Omega,  \: i=1, \ldots, d.\\
\end{split}
\right.
\end{equation}
They also proved that the  limit solutions $\uind{i}$ are \holder continuous and have linear growth from a free boundary point.  Also  that the set of interfaces $\{ x: \uu(x)=0\} $ consists of two parts: a singular set of Hausdorff dimension $n-2$; and a family of analytic surfaces, level surfaces of harmonic functions.

The goal of this paper is to generalize  the regularity results  for the system (\ref{eq: caff problem}) and (\ref{eq: caff limit problem}), presented in Sections 1 and 2 of \cite{caffarelli_geometry_2009}, to the following nonlinear elliptic system of equations 
\begin{eqnarray*}
\left\{
\begin{split}
& \puccin  (\ueind{i}) = \frac{1}{\epsilon}  \ueind{i} \sum_{j \neq i} \ueind{j}, \quad   \mbox{in} \:  \Omega,  \: i=1, \ldots, d,  \\
&\ueind{i}>0 \quad  \mbox{in} \:  \Omega, \: i=1, \ldots, d,  \\
&  \ueind{i} = \phi_{i} \quad  \mbox{on} \: \partial \Omega, \: i=1, \ldots, d,\\
 & \phi_{i}\, \phi_{j}=0 \quad \mbox{on} \: \partial \Omega, \:  i \neq j, \:     \\
  \end{split}
  \right.
\end{eqnarray*}
  where $\puccin$ denotes the extremal Pucci operator (see (\ref{pucci def})),  and to characterize the analogous limit problem (\ref{eq: caff limit problem}) for this case. We also address the existence of solutions of this system.

We have chosen this problem, besides its intrinsic mathematical interest,  in order to study a  model that takes into account  diffusion with preferential directions, so we are able to  model situations with maximal diffusion. The choice of the operator is also  related with its  natural  comparison with a non-divergence linear operator with measurable coefficients.

  The paper is organized as follows: in Section \ref{sec: main results} we present the main theorems  as well as some definitions and background. The proofs of these results are presented in different sections for the reader's convenience. Thus,  Section \ref{subsec: existence}  is dedicate to  the existence of solutions $\uue$.  The proof of \holder regularity up to the boundary for an equation of the type $\puccin(u) = f(x)$ with \holder boundary values in Lipschitz domain is a subsection of this section.  Then, in Section  \ref{sec: holder}, we prove \holder regularity uniform in $\epsilon$  for \uue. The characterization of the limit problem as a free boundary problem has its proof  in Section \ref{sec: limit}. The main result, the linear decay from the free boundary, is proved in Section \ref{sec: lipschitz}. The appendices contain some technical proofs and results that could distract the reader from the essential ideas if they were to be presented in  their respective sections. Thus, Appendix \ref{general pucci} has some elementary general properties of the Pucci operators. Appendix \ref{app: F S inequality} has the Fabes and Strook inequality used in Section  \ref{sec: holder}. Appendices \ref{apd for lips: subhar}, \ref{apd for lips: monotonicity} and \ref{apd for lips: barriers}  contain the Alt-Caffarelli- Friedman Monotonicity formula and the proofs of some results used in the proof of Theorem \ref{theorem: lips} in Section \ref{sec: lipschitz}.

\section{Main Results. Background and some definitions}
\label{sec: main results}

 Let $\Omega \subset \rr^{n}$ be a bounded domain where $d$ populations  co-exist. 
 Consider the following system of fully nonlinear elliptic equations with Dirichlet boundary data for
\begin{equation}
\label{eq: main problem}
\left\{
\begin{split}
&  \puccin  (\ueind{i}) = \frac{1}{\epsilon}  \ueind{i} \sum_{j \neq i} \ueind{j} , \qquad i=1, \ldots, d, \quad  \mbox{in} \: \Omega,   \\
  & \ueind{i} = \phi_{i},  \quad i=1, \ldots, d, \quad  \mbox{on} \quad \partial \Omega,
  \end{split}
  \right.
 \end{equation}
 where $\ueind{i}, (i=1, \ldots, d$) are non-negative functions defined in  $\Omega$ that can be seen as  a density of the population $i,$ and  the parameter $ \frac{1}{\epsilon}$ characterizes the level of competition between species.

Each $\phi_{i}$ is a non-negative \holder  continuous function defined on $ \partial \Omega$ such that $\phi_{i} (x)\phi_{j}(x)=0$ for $i\neq j,$ meaning that they have disjoint supports.

 Here $\puccin $ denotes the  extremal Pucci operator, defined as
\begin{eqnarray}
\label{pucci def}  
\puccin  (\omega) :=  \inf_{A \in \mathcal{A}_{\lambda, \Lambda}}    a_{ij}D_{ij}(\omega (x))=\Lambda \sum_{  e_{i}<0 } e_{i}  + \lambda \sum_{ e_{i}>0 } e_{i},
\end{eqnarray}
where  $\mathcal{A}_{\lambda, \Lambda}$ is the set of  symmetric $n \times n$ real matrices   with eigenvalues   in $[\lambda, \Lambda],$ for some fixed constants $0<\lambda<\Lambda, $ and $e_{i} $ are the eigenvalues of the matrix  $D^{2} \omega (x).$

We assume that $\ueind{i}$ are bounded, $0 \leq \ueind{i} \leq N, $ for all $i.$ Note that $\lambda \D \omega \geq \puccin (\omega),$ thus $\ueind{i} $  are subharmonic, for all $i.$

\begin{remark}
Observe that if $u$ is continuous and subharmonic in the viscosity sense then $u$ is subharmonic in the distributional sense, meaning that 
\begin{eqnarray*}  
\int_{\Omega}\D u \: \phi \: \dx := \int_{\Omega}  u\:  \D \phi \: \dx \geq 0 \qquad \forall \phi \geq 0, \: \phi \in C^{\infty}_{0}.
\end{eqnarray*}  
\end{remark}

Our results in this paper are the following:

\begin{theor}[Existence]
\label{theorem: existence} 
Let $\epsilon >0$ constant, and $\Omega$ be a Lipschitz domain. Let $\phi_{i} $ be a non-negative H\"older continuous functions defined on $\partial \Omega.$ Then there exist continuous functions  $(\ueind{1}, \cdots, \ueind{d})$ depending on the parameter $\epsilon$ such that $\ueind{i}$ is a  viscosity solution of  
\begin{eqnarray*} 
 \puccin  (\ueind{i}) = \frac{1}{\epsilon}  \ueind{i} \sum_{j \neq i} \ueind{j}, \quad \mbox{for all} \quad i=1, \ldots, d.
 \end{eqnarray*} 
  \end{theor}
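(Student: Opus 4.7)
The plan is to construct the solution via a Schauder fixed point argument applied to the coupled system. Fix $\epsilon > 0$ and set $N := \max_i \|\phi_i\|_{L^\infty(\partial\Omega)}$. In the Banach space $C(\overline{\Omega};\mathbb{R}^d)$, consider the closed convex set
\begin{equation*}
K := \{ v = (v_1,\ldots,v_d) \in C(\overline{\Omega};\mathbb{R}^d) : 0 \le v_i \le N, \ v_i|_{\partial\Omega} = \phi_i \},
\end{equation*}
and define $T : K \to C(\overline{\Omega};\mathbb{R}^d)$ by $T(v) = u$, where each component $u_i$ is the viscosity solution of the \emph{scalar} Dirichlet problem
\begin{equation*}
\puccin(u_i) - \tfrac{1}{\epsilon} c_i(x)\, u_i = 0 \text{ in } \Omega, \qquad u_i = \phi_i \text{ on } \partial\Omega,
\end{equation*}
with $c_i(x) := \sum_{j\neq i} v_j(x) \ge 0$. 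Because $c_i \ge 0$, the operator $F(D^2 w, w) := \puccin(D^2 w) - \tfrac{1}{\epsilon} c_i(x)\, w$ is proper and uniformly elliptic, so the Dirichlet problem admits a unique viscosity solution by Perron's method, provided one can build a barrier at every boundary point; the required H\"older regularity up to the boundary for $\puccin$-equations on the Lipschitz domain $\Omega$ is exactly the content of the subsection announced in Section \ref{subsec: existence}. A fixed point $u = T(u)$ solves \eqref{eq: main problem} by construction.

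I then verify the hypotheses of Schauder's theorem in three steps. First, $T(K) \subset K$: the constants $w \equiv 0$ and $w \equiv N$ are respectively a sub- and a supersolution of the scalar problem, since $\puccin(0) = 0$ gives $F(0,0) = 0$ and $F(0,N) = -\tfrac{1}{\epsilon} c_i(x) N \le 0$, so comparison for proper fully nonlinear equations forces $0 \le u_i \le N$, while the boundary trace equals $\phi_i$ by construction. Second, $T(K)$ is precompact in $C(\overline{\Omega};\mathbb{R}^d)$: the right-hand side $\tfrac{1}{\epsilon} c_i u_i$ is bounded in $L^\infty$ by $\tfrac{(d-1)N^2}{\epsilon}$ uniformly in $v \in K$, so the Krylov--Safonov--Caffarelli interior $C^\alpha$ estimate for Pucci equations with bounded right-hand side, together with the boundary $C^\alpha$ estimate from Section \ref{subsec: existence}, yields a uniform bound $\|u_i\|_{C^\alpha(\overline{\Omega})} \le C$ independent of $v$, and Arzel\`a--Ascoli supplies the precompactness. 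Third, $T$ is continuous on $K$: if $v^k \to v$ uniformly, then $c_i^k \to c_i$ uniformly, the sequence $T(v^k)$ is equibounded in $C^\alpha$, and the standard stability of viscosity solutions under uniform convergence of coefficients identifies any subsequential limit as $T(v)$; uniqueness then forces convergence of the full sequence. Schauder's fixed point theorem now produces $u \in K$ with $T(u) = u$.

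The main obstacle is the boundary regularity input just invoked: proving H\"older continuity up to the boundary for $\puccin(w) - c(x)\, w = f$, with $c \ge 0$ and $f$ in $L^\infty$, on a Lipschitz domain with H\"older boundary data, uniformly in $\|c\|_{L^\infty}$ and $\|f\|_{L^\infty}$. This requires constructing explicit Pucci-subbarriers at each boundary point, exploiting the uniform exterior cone condition satisfied by Lipschitz domains, with the attainable H\"older exponent governed by the cone aperture and the ellipticity ratio $\Lambda/\lambda$. Once this boundary regularity is in hand, Perron's method, interior $C^\alpha$, comparison for proper fully nonlinear equations, and viscosity stability are standard tools, and the fixed point argument described above goes through without further complication.
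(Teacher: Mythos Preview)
Your proposal is correct and follows essentially the same Schauder fixed-point strategy as the paper, with the same convex set $K$, the same freezing map $T$, and the same identification of boundary $C^\alpha$ regularity for Pucci equations on Lipschitz domains as the decisive technical input. The only differences are implementation details: the paper obtains the scalar solutions by regularizing the zero-order coefficient and invoking a classical Gilbarg--Trudinger existence theorem on smooth subdomains (rather than Perron's method), and proves continuity of $T$ by an explicit barrier-comparison estimate yielding $\|T(v^n)_i - T(v)_i\|_{L^\infty} \le C \max_j \|v^n_j - v_j\|_{L^\infty}$ (rather than compactness plus stability plus uniqueness).
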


\begin{theor}[Regularity of solutions]
\label{theorem: regularity of solutions} 
Let $\epsilon $ and $\phi_{i} $ be as in Theorem \ref{theorem: existence}. 
Let $\uue =(\ueind{1}, \cdots, \ueind{d})$ be solutions of Problem (\ref{eq: main problem}) in $B_{1}(0)$. Then there exist a constant $\alpha$, $0<\alpha<1 $, such that for any $\epsilon$,  $\uue \in \left( \cspaceof{\alpha}{}{B_{1}(0)} \right)^{d} $ and
\begin{eqnarray*}  
\Vert \uue\Vert_{\left(\cspace{\alpha}{}{\left( B_{\frac{1}{2}} \right)} \right)^{d}}\leq C(N),
\end{eqnarray*}  
 with $N= \sup_{j} \norm{\ueind{j}}{L^{\infty}(B_{1}(0))}$ and $C(N)$ independent of $\epsilon.$ 
\end{theor}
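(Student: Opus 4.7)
The plan is to derive a uniform-in-$\epsilon$ oscillation-decay estimate at every dyadic scale, from which $C^{\alpha}$ regularity follows by the classical iteration. The starting point is that each $\ueind{i}$ is a bounded, non-negative subsolution of the extremal Pucci equation: indeed $\puccin(\ueind{i})=\frac{1}{\epsilon}\ueind{i}\sum_{j\neq i}\ueind{j}\geq 0$. Consequently $N-\ueind{i}$ is a non-negative supersolution, since $\puccin(N-\ueind{i})=-\puccip(\ueind{i})\leq -\puccin(\ueind{i})\leq 0$, so the Krylov--Safonov weak Harnack inequality for Pucci supersolutions applies to $N-\ueind{i}$ on any sub-ball of $B_{1}(0)$.

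The oscillation decay is obtained by a dichotomy on the species of largest oscillation. Fix $x_{0}\in B_{1/2}$ and $B_{r}(x_{0})\subset B_{3/4}$, and assume without loss of generality that $\ueind{1}$ realises the largest oscillation at scale $r$, with $M=\sup_{B_{r}}\ueind{1}$ and $m=\inf_{B_{r}}\ueind{1}$. In the \emph{first alternative}, $\vert\{\ueind{1}\leq (M+m)/2\}\cap B_{r}\vert\geq \vert B_{r}\vert/2$; then $v:=M-\ueind{1}$ is a non-negative supersolution of $\puccin$ satisfying $v\geq (M-m)/2$ on a set of density $1/2$, and the weak Harnack yields $\inf_{B_{r/2}}v\geq c(M-m)$, i.e.\ $\sup_{B_{r/2}}\ueind{1}\leq M - c(M-m)$, the required oscillation decay for $\ueind{1}$. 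In the \emph{second alternative}, $\vert\{\ueind{1}>(M+m)/2\}\cap B_{r}\vert\geq \vert B_{r}\vert/2$; on this set the equation for each $\ueind{j}$, $j\neq 1$, reads
\[
\puccin(\ueind{j})\;\geq\;\frac{1}{\epsilon}\,\ueind{j}\,\ueind{1}\;\geq\;\frac{M+m}{2\epsilon}\,\ueind{j},
\]
a Pucci inequality with an arbitrarily large zero-order coefficient. Applying the Fabes--Strook inequality of Appendix \ref{app: F S inequality} to a linearisation $L_{A}$ in the Pucci family, one obtains a pointwise smallness estimate $\sup_{B_{r/2}}\ueind{j}\leq \delta$ with $\delta/\epsilon$ bounded independently of $\epsilon$. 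Consequently $\frac{1}{\epsilon}\ueind{1}\sum_{j\neq 1}\ueind{j}$ is uniformly bounded on $B_{r/2}$, and the standard Krylov--Safonov $C^{\alpha}$ estimate for Pucci equations with bounded right-hand side delivers the required oscillation decay for $\ueind{1}$ on $B_{r/4}$; for $j\neq 1$ the decay follows immediately from $\ueind{j}\leq \delta$.

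Iterating the dichotomy dyadically gives a geometric decay $\max_{i}\mathrm{osc}_{B_{2^{-k}r}(x_{0})}\ueind{i}\leq C(1-\theta)^{k}N$ for some $\theta\in(0,1)$ depending only on $n,\lambda,\Lambda,d$, and the Campanato characterisation yields $\uue\in (C^{\alpha}(B_{1/2}))^{d}$ with $\alpha=-\log(1-\theta)/\log 2$ and norm controlled by $C(N)$. The main obstacle is the second alternative: converting the large-potential inequality $\puccin(\ueind{j})\geq K\ueind{j}$ with $K\sim M/\epsilon\to\infty$ into a quantitative, uniform-in-$\epsilon$ interior smallness estimate on $\ueind{j}$. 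This requires a non-divergence Harnack-type estimate together with Pucci-adapted barriers that remain usable under arbitrarily large zero-order perturbations, and this is precisely the role of the Fabes--Strook inequality in Appendix \ref{app: F S inequality}.
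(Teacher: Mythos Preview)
Your first alternative is correct and is exactly the paper's Lemma~4.1(1): the weak Harnack inequality applied to $M-\ueind{1}\in\overline{S}(\lambda,\Lambda,0)$ gives the oscillation drop for the species of largest oscillation when the ``low'' set has density at least $1/2$.

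The second alternative, however, has a genuine gap. From $\puccin(\ueind{j})\geq \frac{M+m}{2\epsilon}\,\ueind{j}$ on a half-measure set you assert a pointwise bound $\sup_{B_{r/2}}\ueind{j}\leq\delta$ with $\delta/\epsilon$ bounded independently of $\epsilon$. This is not what the Fabes--Strook inequality delivers: the Green-function argument (as in the paper's Lemma~4.1(2)--(3)) produces an \emph{oscillation} decay $\max\ueind{j}-\ueind{j}\geq c_{0}(\gamma_{0})\,O_{j}$, where the constant $c_{0}$ depends on the size of the zero-order coefficient $\gamma_{0}=\min\bigl(\tfrac{1}{2},\tfrac{M+m}{2\epsilon}\bigr)$. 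When $(M+m)/\epsilon$ is small---which is unavoidable in the iteration, since after rescaling to a smaller ball the maxima $M$ shrink while $\epsilon$ is fixed, so the effective ratio can become arbitrarily small---the decay constant $c_{0}(\gamma_{0})$ degenerates and the argument stalls. In particular, your claim that the right-hand side $\frac{1}{\epsilon}\ueind{1}\sum_{j\neq 1}\ueind{j}$ becomes uniformly bounded is not justified, and the subsequent appeal to Krylov--Safonov for $\ueind{1}$ is circular.

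The paper closes this gap in two ways you are missing. First, it renormalises at each step so that $\max_{k}\sup\ueind{k}=1$, keeping track of how the effective $\epsilon$ changes, and then splits into the cases $1/\bar\epsilon>1$ and $1/\bar\epsilon\leq 1$. In the second case the right-hand side is automatically bounded by $d$, and interior $C^{1,\alpha}$ regularity (Proposition~2.9) is used directly to locate a sub-level set and force decay via Lemma~4.1(1); this requires a further claim of the type $\theta\leq N O_{i}$, proved by a touching-paraboloid contradiction. Second, the paper does \emph{not} attempt to make every oscillation decay at every step; it shows only that the \emph{largest} oscillation decays, and handles the possibility that the same species keeps having the largest oscillation by Lemma~4.2, which compares $\ueind{1}$ with its $\puccin$-harmonic replacement and shows that once $\sum_{j\neq 1}O_{j}\leq\delta O_{1}$ the oscillation of $\ueind{1}$ must drop. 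Your iteration scheme, as written, needs both of these ingredients to close.
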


In the  limit as $\epsilon \rightarrow 0,$ 
 this model forces  the populations to segregate,  meaning that in the limit the supports of the functions are disjoint 
 and 
 \begin{eqnarray*}  
 \frac{\ueind{i} \ueind{j}}{\epsilon} \rightharpoonup \mu \quad \mbox{in the sense of measures, when} \: \epsilon \rightarrow 0.
\end{eqnarray*}  
 The measure $\mu $ has support on the free boundary. Recall that the support of a measure $\mu$ is the complementary of the set 
$
\{ E: E \: \mbox{the biggest open set such that} \: \mu (E)=0 \}.
$

\begin{theor}[Characterization of the limit problem]
\label{theorem: limit problem}
Let  $\phi_{i} $ be as in Theorem \ref{theorem: existence}. If  \\$\uu \in \left( \cspace{\alpha}{} \right)^{d}$  is the limit of solutions of (\ref{eq: main problem}), then
\begin{enumerate}
\item $\puccin \left(    \uind{i} - \sum_{k\neq j}  \uind{k} \right) \leq 0;$
\item $(\supp  \:u_{i})^{o} \cap \left(\supp \:(\sum_{k \neq i} u_{k})\right)^{o} = \emptyset$ for $i=1, \ldots d;$
\item $\puccin  (u_{i}) = 0, $ when $ u_{i}(x)>0,$ for $ x \in \Omega \quad i=1, \ldots, d;$
\item $\uind{i}(x)= \phi_{i}(x),$ for $x \in \partial \Omega,   \quad i=1, \ldots, d. $
\end{enumerate}
\end{theor}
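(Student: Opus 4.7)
The four conclusions can be obtained in the order $(4), (2), (1), (3)$; only $(1)$ and $(3)$ require real work.

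Item $(4)$ is the direct transfer of boundary data: from Theorem \ref{theorem: regularity of solutions} together with the \holder regularity up to $\partial \Omega$ established in the proof of Theorem \ref{theorem: existence}, the convergence $\uue \to \uu$ is uniform on $\overline{\Omega}$, and $\ueind{i}|_{\partial \Omega} = \phi_{i}$ for every $\epsilon$ passes to the limit. For $(2)$ I would observe that each $\ueind{i}$ is subharmonic (because $\puccin(\omega)\leq \lambda \D \omega$ whenever $\D\omega \geq 0$), so the equation gives $\lambda\D \ueind{i} \geq \frac{1}{\epsilon}\ueind{i}\sum_{j \neq i}\ueind{j}$; integrating against a nonnegative cutoff yields a uniform $L^{1}_{\mathrm{loc}}$ bound on $\frac{1}{\epsilon}\ueind{i}\ueind{j}$, hence $\ueind{i}\ueind{j} \to 0$ in $L^{1}_{\mathrm{loc}}$. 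Combined with the locally uniform convergence $\uue \to \uu$ provided by Theorem \ref{theorem: regularity of solutions}, this forces $u_{i}u_{j} \equiv 0$, and continuity upgrades the pointwise identity to the disjoint-interior statement of $(2)$.

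The heart of the argument is $(1)$. I would first prove, at the $\epsilon$ level, that $w_{\epsilon} := \ueind{i} - \sum_{k\neq i}\ueind{k}$ satisfies $\puccin(w_{\epsilon}) \leq 0$ in the viscosity sense. The formal calculation combines three pointwise Pucci identities --- the subadditivity $\puccin(u+v)\leq \puccin(u)+\puccip(v)$, the sign reversal $\puccip(-v) = -\puccin(v)$, and the superadditivity $\puccin(\sum_{k} v_{k})\geq \sum_{k}\puccin(v_{k})$ --- with the equations satisfied by each $\ueind{k}$. The positive cross-terms $\frac{1}{\epsilon}\ueind{i}\ueind{k}$ coming from $\puccin(\ueind{i})$ cancel exactly with the corresponding terms in $\sum_{k\neq i}\puccin(\ueind{k})$, and what remains is a manifestly nonpositive expression in the mutual products $\ueind{k}\ueind{j}$ with $k,j \neq i$. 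To upgrade this computation to a viscosity statement I would regularize each $\ueind{k}$ by inf- and sup-convolutions; these semiconcave/semiconvex approximants preserve the Pucci viscosity inequalities and make the algebraic identities above valid pointwise. Sending the regularization parameter to zero yields $\puccin(w_{\epsilon}) \leq 0$ in the viscosity sense. Statement $(1)$ then follows from the standard stability of viscosity supersolutions of $\puccin(\cdot) = 0$ under the locally uniform convergence $\uue \to \uu$ supplied by Theorem \ref{theorem: regularity of solutions}.

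Item $(3)$ decouples into two inequalities. The subsolution bound $\puccin(u_{i}) \geq 0$ transfers directly to the limit, via stability, from $\puccin(\ueind{i}) = \frac{1}{\epsilon}\ueind{i}\sum_{j\neq i}\ueind{j} \geq 0$. The opposite inequality $\puccin(u_{i})\leq 0$ on $\{u_{i}>0\}$ is a corollary of $(1)$ and $(2)$: at any $x_{0}$ with $u_{i}(x_{0}) > 0$, continuity and $(2)$ force $u_{k}\equiv 0$ in a neighborhood of $x_{0}$ for every $k \neq i$, so $u_{i} - \sum_{k\neq i}u_{k} = u_{i}$ on that neighborhood, and $(1)$ gives $\puccin(u_{i}) \leq 0$ there. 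Combining the two yields $\puccin(u_{i}) = 0$ wherever $u_{i}>0$. The main obstacle, as already signalled, is the viscosity justification of $(1)$: the Pucci operator is nonlinear, so differences of viscosity solutions do not inherit the algebraic inequalities for free, and the inf/sup-convolution regularization (or an equivalent doubling-of-variables argument) is where the genuine technical effort is concentrated.
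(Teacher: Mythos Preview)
Your approach to (4) contains a genuine gap. The uniform-in-$\epsilon$ H\"older estimate of Theorem \ref{theorem: regularity of solutions} is purely interior, and the boundary regularity from the existence proof (Proposition \ref{prop: holder up boundary}) has a constant depending on $\norm{f}{L^{\infty}}$, hence on $\epsilon$; the paper flags this explicitly in the remark following Proposition \ref{prop: holder point boundary }. Consequently you do \emph{not} have uniform convergence of $\uue$ on $\overline{\Omega}$, and cannot simply pass boundary values to the limit. The paper instead uses barriers: an upper barrier $u^{\star}_{i}$ solving $\puccin(u^{\star}_{i})=0$ with data $\phi_{i}$, and a lower barrier $\omega_{i}$ solving $\puccin(\omega_{i})=0$ with data $\phi_{i}-\sum_{j\neq i}\phi_{j}$, comparing with $\ueind{i}$ and $\ueind{i}-\sum_{j\neq i}\ueind{j}$ respectively, and then letting $\epsilon\to 0$.

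For (1) your worry about viscosity justification is unnecessary: for each fixed $\epsilon$ the right-hand side $\frac{1}{\epsilon}\ueind{i}\sum_{j\neq i}\ueind{j}$ is $C^{\alpha}$, so by the remark after Proposition \ref{prop: interior holder} each $\ueind{i}$ is $C^{2,\alpha}$ in the interior, and the Pucci algebra $\puccin(\ueind{i}-\sum_{k\neq i}\ueind{k})\leq \puccin(\ueind{i})-\sum_{k\neq i}\puccin(\ueind{k})\leq 0$ holds pointwise; the paper does exactly this and then invokes closedness of $\overline{S}(\lambda,\Lambda,0)$ under uniform convergence. No inf/sup-convolutions are needed.

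Your arguments for (2) and (3) are correct and in fact cleaner than the paper's. The paper proves (2) via a Green's-function computation showing $\fint_{B_{h}(x_{0})}\sum_{k\neq i}\ueind{k}\to 0$ when $u_{i}(x_{0})>0$, and proves (3) by showing $\frac{1}{\epsilon}\ueind{i}\sum_{k\neq i}\ueind{k}\to 0$ locally uniformly on $\{u_{i}>0\}$ so that the equation itself passes to the limit. Your route --- the $L^{1}_{\mathrm{loc}}$ bound on $\frac{1}{\epsilon}\ueind{i}\ueind{j}$ for (2), and reading off $\puccin(u_{i})\leq 0$ on $\{u_{i}>0\}$ directly from (1) once (2) kills the other components --- achieves the same with less work.
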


\begin{theor}[Lipschitz regularity for the free boundary problem]
\label{theorem: lips}
If  $\uu \in (\cspace{\alpha}{}{(B_{1}(0))})^{d}$  is the limit of solutions of (\ref{eq: main problem}) in $B_{1}(0),$ and $x_{0}$ belongs to the set  $\partial \,( \supp \, \uind{ 1})\cap B_{\frac{1}{2}}(0),$   then, without loss of generality, the growth of $u_{1}$ near the boundary of its support  is controlled in a linear way and $\uind{1}$ is Lipschitz. More precisely, there exist a universal constant $C$ such that for any  solution $\uu, $ for any point $x_{0}$ on the free boundary:
\begin{enumerate}
\item $\sup_{B_{R}(x_{0})} \uind{1} \leq C\, R,$
\item $\Vert \uind{1} \Vert_{Lip(B_{R}(x_{0}))} \leq C, $
\end{enumerate}
where $C=C(n,\Vert \uu\Vert_{L^{2}(B_{1})})$ and $R\leq \frac{1}{4}.$
\end{theor}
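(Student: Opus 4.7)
The plan is to adapt the strategy of Caffarelli--Karakhanyan--Lin for the Laplacian case, with the Alt--Caffarelli--Friedman (ACF) monotonicity formula as the main engine. After translation, assume $x_0 = 0$ and set $v := \sum_{k \neq 1} u_k$. By Theorem \ref{theorem: limit problem}, $u_1$ and $v$ are continuous, non-negative, satisfy $u_1 v \equiv 0$, have disjoint interiors of support, and both vanish at the origin. Using $\puccin(u_1) = 0$ on $\{u_1 > 0\}$ together with the pointwise inequality $\lambda \D w \geq \puccin(w)$ (extended to the viscosity/distributional sense), $u_1$ is subharmonic in $B_1$; the same holds for $v$, being a sum of subharmonic functions.

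With this setup, apply the ACF monotonicity formula (Appendix \ref{apd for lips: monotonicity}) to the pair $(u_1, v)$. Since both functions are non-negative, continuous, subharmonic, vanish at 0, and have disjoint positivity sets, the quantity
$$ J(r) = \frac{1}{r^4} \left(\int_{B_r(0)} \frac{|\nabla u_1|^2}{|x|^{n-2}}\,dx\right)\left(\int_{B_r(0)} \frac{|\nabla v|^2}{|x|^{n-2}}\,dx\right) $$
is non-decreasing in $r \in (0, 1/2]$. A Caccioppoli-type estimate at radius $r = 1/2$ then bounds $J(1/2)$ by $C \Vert \uu \Vert_{L^2(B_1)}^4$, so $J(r) \leq C$ uniformly for $r \leq 1/4$.

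To obtain (1), combine the ACF bound with subharmonicity. Via Poincar\'e, the weighted Dirichlet energies appearing in $J$ control the $L^2$-averages of $u_1$ and $v$ on $B_r$; the product of averages being $O(r^4)$ forces at least one factor to be $O(r^2)$, and a dichotomy argument (which factor is small is decided by $x_0$ being a free boundary point of $u_1$ specifically) transfers this to $u_1$. A blow-up/contradiction argument upgrades this to the uniform estimate $\sup_{B_R(x_0)} u_1 \leq C R$: any blow-up $u_1^{r_j}(x) := u_1(r_j x)/r_j$ remains a viscosity solution of the same Pucci equation in its positivity set and a subharmonic function globally, and the ACF bound forces the blow-up limit to be a linear cone supported on a half-space, yielding linear growth back at unit scale. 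For (2), combine (1) with interior $C^{1,\alpha}$-regularity for $\puccin(u_1) = 0$ (Caffarelli--Cabr\'e): for $x \in \{u_1 > 0\}$ with $d(x) := \dist(x, \partial\{u_1 > 0\})$,
$$ |\nabla u_1(x)| \leq \frac{C}{d(x)} \sup_{B_{d(x)/2}(x)} u_1 \leq C, $$
the last inequality by (1) applied at the nearest free boundary point. Since $u_1 \equiv 0$ outside its positivity set, Lipschitz regularity extends to all of $B_R(x_0)$.

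The main obstacle is the rigorous use of ACF in this fully nonlinear setting. While $u_1$ is subharmonic and ACF is classically valid for subharmonic functions with disjoint positivity sets, one must carefully justify the integration-by-parts identities across a free boundary that is only H\"older-regular a priori, and verify the formula using only subharmonicity (not harmonicity) of $u_1$. A secondary difficulty is to carry out the blow-up upgrade from ``$J$ bounded'' to linear growth so that the resulting constant depends only on $n$ and $\Vert \uu \Vert_{L^2(B_1)}$, exploiting stability of viscosity solutions of $\puccin(\,\cdot\,) = 0$ under uniform convergence.
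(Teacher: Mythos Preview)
Your proposal has a genuine gap in how the contradiction is closed. The ACF monotonicity formula only bounds the \emph{product} of the two weighted Dirichlet energies; it does not by itself force linear growth of $u_1$. Your ``dichotomy argument'' is where the argument breaks down: knowing that $x_0 \in \partial(\supp u_1)$ does not decide which factor is small, and in fact if $u_1$ grows fast near $x_0$ while $v$ is tiny (or identically zero) in a neighborhood, the ACF product can stay bounded with no contradiction in sight. Your blow-up step then appeals to a classification of global cones (``linear cone supported on a half-space''), but such a classification is available for harmonic functions, not for viscosity solutions of $\puccin(\cdot)=0$; there is no Liouville/cone rigidity for the Pucci operator that would finish the argument here.

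The paper closes the contradiction differently, and this is the idea you are missing. One assumes $u_1(y)=Md$ with $M$ large and $d=\dist(y,\partial\supp u_1)$, rescales so $d=1$, and uses Harnack plus a radial \emph{sub}solution barrier to get a quantitative lower bound $u_1(x)\ge \overline{M}\,(1-|x|)$ in $B_1\setminus B_{1/2}$ with $\overline{M}\sim M$. This large lower bound on $u_1$ is fed into ACF (or, when $\supp v$ is thin, into a direct subharmonic decay argument) to force the complementary upper bound $v(x)\le (C/\overline{M})\,\dist(x,\partial B_1)$ near the free boundary point $z_0$. The contradiction then comes not from a blow-up classification but from a \emph{touching argument}: one constructs an explicit smooth radial function $\eta$ with $\eta\le u_1-v$ in $B_\delta(z_0)$, $\eta(z_0)=(u_1-v)(z_0)=0$, and $\puccin(\eta)(z_0)>0$ (because $\overline{M}$ is large), which violates the supersolution property $\puccin(u_1-v)\le 0$ established in Theorem~\ref{theorem: limit problem}(1). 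That last step is precisely where the fully nonlinear structure enters and is what replaces the harmonic cone classification you were reaching for. Your treatment of part~(2) via interior $C^{1,\alpha}$ estimates is correct and matches the paper.
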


The proofs are developed in the following sections. Although, the last three results are similar in spirit to the ones proved in \cite{caffarelli_geometry_2009} for the elliptic linear system of equations, our proofs use different techniques.

 \subsection{Background and some definitions}

In this part we present the definition of viscosity solutions, the spaces $\underbar{S}(\lambda, \Lambda, f)$ and $\overline{S}(\lambda, \Lambda, f)$ and some results from the fully nonlinear elliptic theory (see \cite{caffarelli_fully_1995} for the proofs and more detail).

Here $\puccip $ will denote the  positive extremal Pucci operator, 
\begin{eqnarray*}   
 \puccip (\omega) =  \sup_{A \in \mathcal{A}_{\lambda, \Lambda}}    a_{ij}D_{ij}(\omega)=\lambda \sum_{  e_{i}<0 } e_{i}  + \Lambda \sum_{ e_{i}>0 } e_{i}.
\end{eqnarray*}  
%

\begin{definition}
Let  $f$ be a continuous function defined in $\Omega$ and $0<\lambda< \Lambda$ two constants.
 We denote by 
$\underbar{S}(\lambda, \Lambda, f)$ 
the space of continuous functions $u $ defined in $\Omega$ that are viscosity  subsolutions  of 
 $\puccip(u)=f(x)$  in $\Omega$, meaning that if $x_{0} \in \Omega$,  $A $ is a neighborhood of $x_{0}$, and  $P$ a paraboloid (polynomial of degree 2) that touches $u$ from above at $x_{0}, $ i.e.
\begin{eqnarray*}  
 P(x) \geq u(x) \qquad x \in A \qquad \mbox{and}  \qquad P(x_{0}) =u(x_{0}),
\end{eqnarray*}  
then 
\begin{eqnarray*}  
\puccip(P(x_{0})) \geq f(x_{0}).
\end{eqnarray*}  
In similar way, we denote by 
$\overline{S}(\lambda, \Lambda, f)$ 
the space of continuous functions $u $ defined in $\Omega$ that are viscosity supersolutions  of 
 $\puccin(u)=f(x)$  in $\Omega$, meaning that if $x_{0} \in \Omega$,  $A $ is a neighborhood of $x_{0}$, and  $P$ a paraboloid (polynomial of degree 2) that touches $u$ from below at $x_{0}, $ i.e.
\begin{eqnarray*}  
 P(x) \leq u(x) \qquad x \in A \qquad \mbox{and}  \qquad P(x_{0})= u(x_{0}),
\end{eqnarray*}  
then 
\begin{eqnarray*}  
\puccin(P(x_{0})) \leq f(x_{0}).
\end{eqnarray*}  
\end{definition}

\begin{remark}
As in \cite{caffarelli_fully_1995} we will denote by $S^{\star}(\lambda, \Lambda, f)$ the set of viscosity solutions $$ \underbar{S}(\lambda, \Lambda,-\abs{f}) \cap \overline{S}(\lambda, \Lambda, \abs{f}).$$
\end{remark}

\begin{definition}
Let $F: \mathcal{S}\times \Omega \rightarrow \rr,$  where $ \mathcal{S}$ is the space of real $n \times n$ symmetric matrices  and $\Omega \subset \rr^{n}.$ We say that 

\begin{itemize}
\item[-] $F$ is a uniform elliptic operator if there are two positive constants $\lambda \leq \Lambda,$ called ellipticity constants, such that, for any $M \in \mathcal{S}$ and $x \in \Omega $ 
\begin{eqnarray*}
\lambda \norm{N}{} \leq F(M+N) -F(M) \leq \Lambda \norm{N}{}
\end{eqnarray*}
 for all $ N \in \mathcal{S} $ non-negative definite matrix, where  $\norm{N}{}=\sup_{\abs{x}=1}{\abs{Nx}}$ is the value of the maximum eigenvalue of $N$ if $N\geq 0.$

\item[-] $F$ is concave (convex) if it is concave (convex) as a function of $M \in \mathcal{S} .$
\end{itemize}
\end{definition}

Now, we recall the comparison principle for viscosity solutions,  Corollary 3.7  in \cite{caffarelli_fully_1995},  that states that a viscosity subsolution that is negative on the boundary has to remain negative in whole domain, and that a viscosity supersolution that is positive on the boundary has to remain positive in whole domain:

\begin{prop}
\label{prop: comparison principle}
Assume that $u \in C(\overline{\Omega}).$ Then,
\begin{enumerate}
\item $u \in \underbar{S}(\lambda, \Lambda, 0)$ and $u \leq 0$ on $\partial \Omega$  imply $u\leq 0$ in $\Omega.$
\item $u \in \overline{S}(\lambda, \Lambda, 0)$ and $u \geq 0$ on $\partial \Omega$  imply $u\geq 0$ in $\Omega.$
\end{enumerate}
  \end{prop}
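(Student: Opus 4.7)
The plan is to prove (1); assertion (2) then follows by the duality observation that if $u\in\overline{S}(\lambda,\Lambda,0)$, then $v:=-u$ belongs to $\underbar{S}(\lambda,\Lambda,0)$. Indeed, a paraboloid $P$ touches $v$ from above at $x_{0}$ precisely when $-P$ touches $u$ from below at $x_{0}$, and the elementary Pucci duality $\puccip(-M)=-\puccin(M)$ converts the supersolution inequality $\puccin(D^{2}(-P))(x_{0})\le 0$ for $u$ into the subsolution inequality $\puccip(D^{2}P)(x_{0})\ge 0$ for $v$. Since $v\le 0$ on $\partial\Omega$ when $u\ge 0$, applying (1) to $v$ yields (2).

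For (1), I argue by contradiction and assume $M:=\max_{\overline{\Omega}}u>0$. The naive idea of touching $u$ at an interior maximizer by the constant $M$ fails, since $\puccip(0)=0\ge 0$ produces no contradiction. The remedy is a strict perturbation by a classical strict supersolution. Take $\phi(x):=e^{\alpha x_{1}}$ for some $\alpha>0$; a direct computation shows that $D^{2}\phi(x)=\alpha^{2}e^{\alpha x_{1}}\,e_{1}\otimes e_{1}$ has a single nonzero eigenvalue $\alpha^{2}e^{\alpha x_{1}}>0$, so by the eigenvalue formula for $\puccin$,
\[
\puccin(D^{2}\phi)(x)=\lambda\alpha^{2}e^{\alpha x_{1}}\ge c_{0}>0 \qquad\text{on }\overline{\Omega}.
\]
Set $K:=\max_{\overline{\Omega}}\phi-\min_{\overline{\Omega}}\phi<\infty$ and consider $v_{\varepsilon}:=u+\varepsilon\phi$ for $0<\varepsilon<M/(2K)$. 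On $\partial\Omega$ one has $v_{\varepsilon}\le\varepsilon\max_{\overline{\Omega}}\phi$, whereas at an interior maximizer $x_{0}$ of $u$, $v_{\varepsilon}(x_{0})\ge M+\varepsilon\min_{\overline{\Omega}}\phi$; for $\varepsilon$ small the second quantity strictly exceeds the first, so $v_{\varepsilon}$ attains its maximum at some interior point $x_{\varepsilon}\in\Omega$.

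At $x_{\varepsilon}$, the smooth function $P(x):=v_{\varepsilon}(x_{\varepsilon})-\varepsilon\phi(x)$ touches $u$ from above. To match the paper's definition of $\underbar{S}(\lambda,\Lambda,0)$, which requires a degree-2 polynomial, I replace $P$ by its second-order Taylor expansion at $x_{\varepsilon}$ augmented by the bump $\tfrac{\eta}{2}|x-x_{\varepsilon}|^{2}$; for every $\eta>0$ the resulting paraboloid still touches $u$ from above in a small neighborhood of $x_{\varepsilon}$, so the subsolution property yields $\puccip\!\bigl(-\varepsilon D^{2}\phi(x_{\varepsilon})+\eta I\bigr)\ge 0$. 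Sending $\eta\downarrow 0$ and invoking $\puccip(-M)=-\puccin(M)$ produces $\puccin(D^{2}\phi)(x_{\varepsilon})\le 0$, contradicting the strict lower bound $c_{0}>0$. The only technical subtlety I anticipate is this passage from the $C^{2}$ test function $P$ to an osculating paraboloid; once it is handled by the quadratic-bump trick, everything else is an immediate consequence of the eigenvalue formulas for $\puccipm$ and the strict sign of $\puccin(e^{\alpha x_{1}})$.
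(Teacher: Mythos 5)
Your proof is correct, but it follows a genuinely different route from the paper: the paper does not prove this proposition at all, it simply quotes it as Corollary 3.7 of Caffarelli--Cabr\'e, where the statement is deduced from the Alexandroff--Bakelman--Pucci estimate, i.e.\ from the quantitative bound $\sup_{\Omega}u\le \sup_{\partial\Omega}u^{+}+C\Vert f\Vert_{L^{n}}$ specialized to $f=0$. You instead give a short self-contained perturbation argument: contradict an interior maximum of $u+\varepsilon e^{\alpha x_{1}}$, using that $e^{\alpha x_{1}}$ is a strict classical subsolution of $\puccin$ on the bounded set $\overline{\Omega}$, together with the duality $\puccip(-M)=-\puccin(M)$ (which is listed in Appendix A of the paper and also justifies your reduction of (2) to (1) via $v=-u$). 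The one delicate step, passing from the smooth test function $v_{\varepsilon}(x_{\varepsilon})-\varepsilon\phi$ to a genuine paraboloid as the paper's definition requires, is handled correctly by the quadratic bump $\tfrac{\eta}{2}\vert x-x_{\varepsilon}\vert^{2}$; note you could avoid the limit $\eta\downarrow 0$ altogether, since $\puccip\bigl(-\varepsilon D^{2}\phi(x_{\varepsilon})+\eta I\bigr)\le -\varepsilon\lambda\alpha^{2}e^{\alpha x_{\varepsilon,1}}+n\Lambda\eta<0$ for $\eta$ small already contradicts the subsolution inequality. The trade-off between the two approaches: the ABP route yields the quantitative estimate with nonzero right-hand side, which the paper does need elsewhere (e.g.\ in the remark preceding the $L^{\epsilon}$ lemma), whereas your argument is more elementary and proves exactly the qualitative $f=0$ statement asserted here; it implicitly uses the boundedness of $\Omega$ (so that $K<\infty$, $c_{0}>0$ and the maximum is attained), which is the paper's standing assumption.
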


The following compactness result (Proposition 4.11 in \cite{caffarelli_fully_1995}) follows from the closedness of the family of viscosity solutions of Problem (\ref{eq:general})  under the uniform convergence and the Ascoli-Arzela theorem.

\begin{prop}
\label{prop: compactness }
Let $\{  F_{k}\}_{k \geq 1}$ be a sequence of uniformly elliptic operators with ellipticity constants $\lambda, \, \Lambda$  and let $\{u_{k}\}_{k \geq 1} \subset C (\Omega)$ be viscosity solutions in $\Omega$ of 
\begin{equation}
\label{eq:general}
F_{k}(D^2 u_{k},x) = f(x)
\end{equation}
Assume that $F_{k}$ converges uniformly in compact sets of $ \mathcal{S}\times \Omega$ to $F,$ where $\mathcal{S}$ is the space of real  symmetric matrices, and that $u_{k}$ is uniformly bounded in  compact sets of $\Omega$.
Then there exist $u \in C(\Omega)$ and a subsequence of $\{u_{k}\}_{k \geq 1} $ that converges uniformly to $u$ in compact sets of $\Omega$. Moreover, $F(D^2 u, x)= f(x)$ in the viscosity sense in $\Omega.$ 
  \end{prop}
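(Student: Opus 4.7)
The plan is to combine three ingredients: uniform interior Hölder estimates for the Pucci class $S^*(\lambda,\Lambda,\cdot)$, an Arzelà-Ascoli diagonal argument to extract a locally uniformly convergent subsequence, and the standard stability-of-viscosity-solutions argument to identify the limit equation.

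First, I would exploit uniform ellipticity of each $F_k$ to place all of the $u_k$ in a single Pucci class. Uniform ellipticity with constants $\lambda,\Lambda$ gives, for any symmetric $M,N$,
\[
\puccin(N) \le F_k(M+N,x) - F_k(M,x) \le \puccip(N).
\]
Applied with $M=0$, $N=D^2 u_k$, and using $F_k(D^2 u_k,x)=f(x)$ after absorbing the locally bounded term $F_k(0,\cdot)$ into the right-hand side, this places $u_k$ in $S^*(\lambda,\Lambda,|f|+C)$ on every $K\Subset\Omega$. The Krylov-Safonov interior Hölder estimate for the Pucci class (Chapter 4 of \cite{caffarelli_fully_1995}) then produces a universal $\alpha\in(0,1)$ and, for every pair $K\Subset K'\Subset\Omega$, an estimate on $\|u_k\|_{C^{\alpha}(K)}$ depending only on $n,\lambda,\Lambda,K,K'$, the local sup-norm of $u_k$ on $K'$, and $\|f\|_{L^\infty(K')}$. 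Since the $u_k$ are uniformly bounded on compact sets by hypothesis, a diagonal Arzelà-Ascoli argument along an exhaustion $K_j\Subset\Omega$ extracts a subsequence, still denoted $\{u_k\}$, converging locally uniformly to some $u\in C(\Omega)$.

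Next, I would pass to the viscosity limit. Fix $x_0\in\Omega$ and let $P$ be a paraboloid touching $u$ from above at $x_0$ in a small closed ball $\overline{B_r(x_0)}\subset\Omega$. To make the touching strict, replace $P$ by $\tilde P(x):=P(x)+\delta|x-x_0|^2$ for small $\delta>0$; then $\tilde P-u>0$ on $\partial B_r(x_0)$ while $\tilde P(x_0)=u(x_0)$. Because $u_k\to u$ uniformly on $\overline{B_r(x_0)}$, for $k$ large the function $\tilde P - u_k$ attains its minimum on $\overline{B_r(x_0)}$ at some interior point $x_k$, with $x_k\to x_0$ and $c_k:=\min_{\overline{B_r(x_0)}}(\tilde P-u_k)\to 0$. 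Then $\tilde P-c_k$ touches $u_k$ from above at $x_k$, and the viscosity subsolution property of $u_k$ yields $F_k(D^2\tilde P, x_k)\ge f(x_k)$. Letting $k\to\infty$, uniform convergence of $F_k$ on compact sets of $\mathcal{S}\times\Omega$ (applied at the fixed matrix $D^2\tilde P$) together with continuity of $f$ gives $F(D^2\tilde P,x_0)\ge f(x_0)$; sending $\delta\to 0$ and using continuity of $F$ in its matrix slot produces $F(D^2 P,x_0)\ge f(x_0)$. The supersolution half is symmetric.

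The main obstacle is the stability step, specifically ensuring that the touching points $x_k$ remain in the interior of $B_r(x_0)$ so that the viscosity subsolution inequality for $u_k$ is available. The perturbation $P \mapsto P+\delta|x-x_0|^2$ handles this cleanly by making $\tilde P-u$ strictly positive on $\partial B_r(x_0)$, and once this is in place the remaining limit passages are immediate from the convergence hypotheses on $F_k\to F$ and the continuity of $f$ and $F$.
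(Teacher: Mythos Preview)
Your proposal is correct and follows exactly the approach the paper indicates: the paper does not give a detailed proof of this proposition but simply records it as Proposition 4.11 of \cite{caffarelli_fully_1995}, noting that it ``follows from the closedness of the family of viscosity solutions \ldots\ under the uniform convergence and the Ascoli--Arzela theorem.'' Your argument spells out precisely these two ingredients --- the interior H\"older estimate (to obtain equicontinuity and run Arzel\`a--Ascoli) and the stability of viscosity solutions under locally uniform convergence --- so there is nothing to add.
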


Below is the $L^{\epsilon}$ Lemma that follows from  Lemma 4.6 in \cite{caffarelli_fully_1995}, using a standard covering argument. Note that it is enough to consider $f^{+}$ instead of $\abs{f}$ due to the  Alexandroff-Bakelman-Pucci estimate, Theorem 3.2  in \cite{caffarelli_fully_1995}.

 \begin{lemma}
 \label{le lemma}
 
 If $u \in \overline{S}(f^{+})$ in $B_{1}(0),$ $u \in \cspace{}{}{(\overline{B_{1}(0)}) },$  $f$ a is continuous and bounded function in $B_{1}(0),$ and they satisfy:
\begin{enumerate}
\item  $\inf_{B_{\frac{1}{2}}(0) } u(x) \leq  1  $
\item $u(x) \geq 0$ in $B_{1}(0)$
\item $\norm{f^{+}}{\lspace{n}{}{(B_{1}(0))}}\leq \epsilon_{0}$  
\end{enumerate}
Then, if $\epsilon_{0}$ is sufficiently small, there exist $d$ and $\epsilon$ positive universal constants such that:  
$$\abs{\{x \in B_{\frac{1}{4}}(0):  u(x) \geq    t   \}} \leq d t^{-\epsilon}, \quad \mbox{for all} \quad t >0.$$ 

  \end{lemma}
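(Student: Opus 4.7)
The plan is to reduce the statement to the one-step decay from Lemma 4.6 of Caffarelli--Cabr\'e cited in the excerpt and then promote it to a power-type decay via a dyadic Calder\'on--Zygmund iteration; this is the classical route to an $L^{\epsilon}$ estimate for nonnegative supersolutions of uniformly elliptic equations.

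First I would apply the Caffarelli--Cabr\'e base estimate directly to $u$: together with hypotheses (1), (2), (3) it yields universal constants $M > 1$ and $\mu \in (0,1)$ such that
$$
\left|\{x \in B_{1/4}(0) : u(x) > M\}\right| \leq (1-\mu)\,|B_{1/4}(0)|,
$$
provided the smallness threshold $\epsilon_0$ is chosen small enough to absorb the Alexandroff--Bakelman--Pucci contribution coming from $f^{+}$. This is the base case of the iteration.

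Next I would iterate by the standard dyadic covering argument. Setting $A_k = \{x \in B_{1/4}(0) : u(x) > M^k\}$, I would subdivide $B_{1/4}(0)$ into dyadic subcubes and stop at any subcube $Q$ on which the density of $A_{k+1}$ exceeds $1-\mu/2$. On the predecessor cube $\tilde Q$ of each stopped $Q$, the rescaled function $v = u/M^k$ is still a nonnegative element of $\overline{S}(f^{+}/M^k)$, and the $L^n$ norm of its right-hand side is only smaller than before. Applying the base case to $v$ on $\tilde Q$ and summing over the stopped predecessors via a Vitali-type covering gives an inequality of the form
$$
|A_{k+1}| \leq (1-\mu/2)\,|A_k| + C\,\norm{f^{+}}{\lspaceof{n}{}{B_1(0)}}^{n},
$$
which, because the $f$-error is a fixed small quantity while the geometric factor accumulates, leads by induction to $|A_k| \leq C(1-\mu/2)^k$. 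For arbitrary $t > 0$ choose $k$ with $M^k \leq t < M^{k+1}$; then
$$
\left|\{x \in B_{1/4}(0) : u(x) > t\}\right| \leq |A_k| \leq d\, t^{-\epsilon},
$$
with $\epsilon = -\log(1-\mu/2)/\log M$ and $d$ absorbing the multiplicative constants.

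The main obstacle I expect is the bookkeeping in the iteration step: one must verify that the rescaled hypothesis ``$\inf v \leq 1$ on a suitable subcube'' is available at each dyadic level, and that the additive error produced by $f^{+}$ does not overwhelm the geometric decay. Both points are handled by fixing $\epsilon_0$ once and for all (universally small, independent of the level $k$) and exploiting the fact that $f^{+}/M^k$ has strictly decreasing $L^n$ norm along the iteration, so the error term stays subordinate to the geometric factor $(1-\mu/2)^k$ throughout.
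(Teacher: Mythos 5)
Your overall strategy is legitimate but it is not what the paper does: the paper gives no proof at all here, it simply invokes Lemma 4.6 of Caffarelli--Cabr\'e (which is already the full power-type decay on cubes) together with a standard covering/rescaling argument to pass from cubes to the balls $B_{1}, B_{1/2}, B_{1/4}$, plus ABP to replace $\abs{f}$ by $f^{+}$. What you propose is instead a reconstruction of the \emph{proof} of that cube lemma, starting from the one-step critical density estimate (which is Lemma 4.5 in Caffarelli--Cabr\'e, not 4.6 --- a citation slip) and promoting it by a Calder\'on--Zygmund iteration. That is a perfectly standard and more self-contained route, so the difference from the paper is acceptable; the problem is in how you execute the iteration.

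The genuine gap is the displayed recursion $\abs{A_{k+1}}\leq(1-\mu/2)\abs{A_k}+C\,\|f^{+}\|_{L^{n}(B_1)}^{n}$ with a $k$-independent additive error: iterating it only yields $\abs{A_k}\leq(1-\mu/2)^{k}\abs{A_0}+\tfrac{2C}{\mu}\epsilon_0^{n}$, which stagnates at a positive constant and therefore cannot give $\abs{A_k}\leq C(1-\mu/2)^{k}$, no matter how small $\epsilon_0$ is fixed. In the correct Calder\'on--Zygmund argument there is no additive error at all: one applies the base estimate to $v=u/M^{k}$ rescaled to the (enlarged) predecessor cube $\tilde Q$, and rescaling to a cube of side $r<1$ multiplies the $L^{n}$ norm of the right-hand side by $r\leq1$, so the smallness hypothesis is automatically preserved and the dyadic covering lemma (Lemma 4.2 in Caffarelli--Cabr\'e, not a Vitali covering) yields the purely multiplicative bound $\abs{A_{k+1}}\leq(1-\mu)\abs{A_k}$. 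Relatedly, the hypothesis ``$\inf v\leq1$ on the enlargement of $\tilde Q$'' is not something you can secure by choosing $\epsilon_0$; it enters through the contrapositive: either the enlarged predecessor contains a point with $u\leq M^{k}$, in which case the base lemma forces $\abs{A_{k+1}\cap Q}\leq(1-\mu)\abs{Q}$, contradicting the stopping condition, or else $\tilde Q\subset A_k$, which is exactly what the covering lemma needs. With that structure in place (and the final, routine adjustment from cubes to the balls in the statement, where $u\geq0$ on $B_1$ and $\inf_{B_{1/2}}u\leq1$ are used to ensure the enlarged cubes stay admissible), your argument closes; as written, the induction step does not.
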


Now, we recall the inequality that gives interior \holder regularity and that follows from the Harnack inequality for viscosity solutions:

\begin{prop}
\label{prop: osc decay}
Let $\omega  \in \overline{S}(\lambda, \Lambda, \abs{f}) \cap \underbar{S}(\lambda, \Lambda, -\abs{f}) $ with $f$ a continuous and bounded function in $B_{1}(0)$. Then, there exists a universal constant $\mu <1$ such that
\begin{eqnarray*}  
\osc_{B_{\frac{1}{2}}(0)} \omega \leq \mu \osc_{B_{\frac{1}{2}}(0)} \omega + \norm{f}{L^{n}(B_{1}(0))}.
\end{eqnarray*}  
  \end{prop}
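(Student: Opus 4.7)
The plan is to reduce the claim to the Harnack inequality for viscosity solutions in $S^{\star}(\lambda,\Lambda,|f|)$, which is the standard consequence of Lemma \ref{le lemma} combined with the local maximum principle for subsolutions in $\underbar{S}(\lambda,\Lambda,|f|)$ (Theorem 4.3 of \cite{caffarelli_fully_1995}). Reading the right-hand side of the target inequality as $\mu\,\osc_{B_{1}(0)}\omega + \norm{f}{L^{n}(B_{1}(0))}$ (the estimate would otherwise fail already for $\omega(x)=x_1$ with $f\equiv 0$), what has to be proved is the classical oscillation decay, and it will follow from applying Harnack separately to $\omega - m$ and $M-\omega$ and adding the two resulting bounds.

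Set $M := \sup_{B_{1}(0)} \omega$, $m := \inf_{B_{1}(0)} \omega$, and introduce $v_{1} := \omega - m$ and $v_{2} := M - \omega$, both nonnegative in $B_{1}(0)$. Subtraction of a constant does not affect membership in the Pucci classes, and since $\puccip(-u) = -\puccin(u)$ a sign flip interchanges $\overline{S}$ and $\underbar{S}$ (replacing $f$ by $-f$); hence both $v_{1}$ and $v_{2}$ lie in $S^{\star}(\lambda,\Lambda,|f|)$, and the Harnack inequality is available for each of them on $B_{1}(0)$. Writing $M_{1/2} := \sup_{B_{1/2}(0)} \omega$ and $m_{1/2} := \inf_{B_{1/2}(0)} \omega$, Harnack applied to $v_{1}$ and to $v_{2}$ produces, with a universal constant $C_{H}$,
\begin{align*}
M_{1/2} - m &\leq C_{H}\bigl( m_{1/2} - m + \norm{f}{L^{n}(B_{1})} \bigr), \\
M - m_{1/2} &\leq C_{H}\bigl( M - M_{1/2} + \norm{f}{L^{n}(B_{1})} \bigr).
\end{align*}

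Adding these two inequalities collapses the left-hand side to $(M-m) + (M_{1/2}-m_{1/2})$ and the right-hand side to $C_{H}(M-m) - C_{H}(M_{1/2}-m_{1/2}) + 2C_{H}\norm{f}{L^{n}(B_{1})}$. A one-line rearrangement then yields
\begin{equation*}
(C_{H}+1)\,\osc_{B_{1/2}(0)}\omega \leq (C_{H}-1)\,\osc_{B_{1}(0)}\omega + 2C_{H}\norm{f}{L^{n}(B_{1}(0))},
\end{equation*}
which is the desired estimate with $\mu := (C_{H}-1)/(C_{H}+1) \in (0,1)$. There is no real obstacle here: once Harnack is in hand, the argument is simply the standard splitting of the oscillation followed by elementary arithmetic. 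The one technical point that needs verification is that Lemma \ref{le lemma} really is the weak-Harnack half of the full Harnack inequality for $S^{\star}(\lambda,\Lambda,|f|)$, and that it combines with the local maximum principle for subsolutions in $\underbar{S}(\lambda,\Lambda,|f|)$ to deliver the two-sided bound $\sup_{B_{1/2}} v \leq C_{H}(\inf_{B_{1/2}} v + \norm{f}{L^{n}(B_{1})})$ that was used above.
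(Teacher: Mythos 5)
Your proof is correct, and it follows exactly the route the paper intends: the proposition is recalled there without proof as a standard consequence of the Harnack inequality for functions in $S^{\star}(\lambda,\Lambda,\abs{f})$, and your splitting into $\omega-m$ and $M-\omega$ (after noting that constants and the sign flip keep you in $S^{\star}(\lambda,\Lambda,\abs{f})$) is precisely that standard derivation; you are also right that the right-hand side oscillation should read $\osc_{B_{1}(0)}\omega$, the statement as printed being a typo. The only cosmetic discrepancy is that your bookkeeping gives the factor $2C_{H}/(C_{H}+1)<2$ in front of $\norm{f}{L^{n}(B_{1}(0))}$ rather than $1$, which is immaterial since the proposition is later applied only with $f\equiv 0$.
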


The interior \holder regularity that we will use is a particular case of Theorem 7.1 in \cite{caffarelli_fully_1995},  and Sobolev embedding:

\begin{prop}
\label{prop: interior holder}
Let $\omega $ be a bounded viscosity solution  of $\puccin(\omega)=f(x)$ in $B_{1}(0),$
with $f$ a continuous bounded function in $B_{1}(0)$. Then there exists a positive constant $C$ depending only on $n, \lambda, \Lambda$ such that $\omega \in \wspace{2,p}{}{(B_{\frac{1}{2}}(0))},$ for any $p<\infty,$ and so $ \omega \in \cspace{1,\alpha}{}{(B_{\frac{1}{2}}(0))}$ for any $\alpha <1,$ and we have
\begin{eqnarray*}
\norm{\omega}{C^{1,\alpha}(B_{\frac{1}{2}}(0))}\leq C\left(  \norm{\omega}{L^{\infty}(B_{1}(0))}+ \norm{f}{L^{p}(B_{1}(0))}\right).
\end{eqnarray*}
  \end{prop}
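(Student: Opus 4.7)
The plan is to deduce the statement from two standard ingredients: the interior $W^{2,p}$ estimate for viscosity solutions of concave fully nonlinear uniformly elliptic equations (Theorem 7.1 of Caffarelli--Cabr\'e, which the proposition explicitly cites), followed by the Sobolev embedding $W^{2,p}\hookrightarrow C^{1,\alpha}$ for $p>n$.

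First, I would verify that $\puccin$ meets the structural hypotheses of the cited theorem. From the explicit formula in (\ref{pucci def}), $\puccin$ is uniformly elliptic with ellipticity constants $\lambda,\Lambda$; and since $\puccin(M)=\inf_{A\in\mathcal{A}_{\lambda,\Lambda}}\tr(AM)$ is a pointwise infimum of linear functions of $M$, it is concave in $M$. It also satisfies $\puccin(0)=0$ and depends on no other variable than $D^{2}\omega$, so the operator $F(M,x):=\puccin(M)-f(x)$ fits the framework of concave uniformly elliptic equations with a continuous right-hand side. Continuity and boundedness of $f$ on $B_{1}(0)$ immediately give $f\in L^{p}(B_{1}(0))$ for every $p<\infty$.

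Second, I would apply Theorem 7.1 of \cite{caffarelli_fully_1995} (the $W^{2,p}$ estimate for concave equations) to $\omega$ on $B_{1}(0)$ with any fixed $p<\infty$. This yields $\omega\in W^{2,p}(B_{1/2}(0))$ together with
\begin{eqnarray*}
\norm{\omega}{W^{2,p}(B_{1/2}(0))}\leq C\bigl(\norm{\omega}{L^{\infty}(B_{1}(0))}+\norm{f}{L^{p}(B_{1}(0))}\bigr),
\end{eqnarray*}
where $C=C(n,\lambda,\Lambda,p)$. Given any target exponent $\alpha\in(0,1)$, I would then select $p>n$ large enough that $1-\tfrac{n}{p}\geq\alpha$; the Sobolev embedding $W^{2,p}(B_{1/2}(0))\hookrightarrow C^{1,\alpha}(B_{1/2}(0))$ applied to the second derivatives, combined with the previous bound, produces the desired estimate.

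The only genuine obstacle is the $W^{2,p}$ estimate itself, whose proof in \cite{caffarelli_fully_1995} combines the Alexandroff--Bakelman--Pucci estimate, the Krylov--Safonov Harnack inequality, a Calder\'on--Zygmund decomposition, and crucially exploits the concavity of $\puccin$ (to pass from Evans--Krylov-type interior second-derivative control on solutions of the homogeneous equation to $L^{p}$ control of $D^{2}\omega$ in the inhomogeneous case). Since this is already established in the cited monograph, the proposition is obtained by direct quotation together with the elementary Sobolev step outlined above.
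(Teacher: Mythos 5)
Your proposal is correct and follows exactly the route the paper takes: the paper offers no separate proof, stating only that the proposition is a particular case of Theorem 7.1 in \cite{caffarelli_fully_1995} (the interior $W^{2,p}$ estimate for concave uniformly elliptic equations, which applies since $\puccin$ is an infimum of linear functions of $D^{2}\omega$) combined with the Sobolev embedding $W^{2,p}\hookrightarrow C^{1,\alpha}$ for $p>n$. Your verification of the structural hypotheses and the choice $1-\tfrac{n}{p}\geq\alpha$ simply makes explicit what the paper leaves to the citation.
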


\begin{remark}
$\:$
\begin{enumerate}
\item The same result under the same hypothesis is also valid for a general uniformly elliptic operator, concave or convex.
\item Observe that if $f \in C^{\alpha}$ then $\omega \in C^{2,\alpha}.$
\end{enumerate}
\end{remark}

\section{Existence (Theorem \ref{theorem: existence})}
\label{subsec: existence}

To prove the existence theorem, Theorem \ref{theorem: existence},  we will need a fixed point argument that can be found in  \cite{gilbarg_elliptic_2001}, pg 280. We recall the result here for the sake of completeness:

\begin{prop}
\label{prop: fixed point}
Let $\sigma$ be a closed, convex subset of a Banach space $B.$ Let $T:  \sigma \rightarrow \sigma$ be a continuous function such that $T(\sigma )$ is a pre-compact set. Then $T$ has a fixed point.
  \end{prop}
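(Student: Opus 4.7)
The plan is to reduce this Schauder-type result to the finite-dimensional Brouwer fixed point theorem via a finite-dimensional approximation of the operator $T$, exploiting precompactness of $T(\sigma)$ to build, for each scale $\varepsilon>0$, a continuous map on a compact convex subset of a finite-dimensional space, and then to pass to the limit using continuity of $T$ and closedness of $\sigma$.

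First, fix $\varepsilon>0$. Since $T(\sigma)$ is precompact in $B$, choose a finite $\varepsilon$-net $\{y_1,\dots,y_N\}\subset T(\sigma)\subset\sigma$. Define the Schauder projection
\begin{equation*}
P_\varepsilon(y)\;=\;\frac{\sum_{i=1}^N \mu_i(y)\,y_i}{\sum_{i=1}^N \mu_i(y)},\qquad \mu_i(y)=\max\bigl(\varepsilon-\|y-y_i\|,\,0\bigr),
\end{equation*}
which is well-defined and continuous on $T(\sigma)$ because for each $y\in T(\sigma)$ at least one $\mu_i(y)>0$. A direct estimate gives $\|P_\varepsilon(y)-y\|\le\varepsilon$, and $P_\varepsilon(y)$ is a convex combination of points in $\sigma$, hence lies in $\sigma$. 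Let $\Delta_\varepsilon=\mathrm{conv}\{y_1,\dots,y_N\}\subset\sigma$: this is a compact convex subset of the finite-dimensional subspace spanned by $\{y_1,\dots,y_N\}$. The composition $T_\varepsilon:=P_\varepsilon\circ T$ is then a continuous map from $\sigma$ into $\Delta_\varepsilon$, and by restriction a continuous self-map of the finite-dimensional compact convex set $\Delta_\varepsilon$.

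Next I would invoke Brouwer's fixed point theorem on $T_\varepsilon|_{\Delta_\varepsilon}$ to obtain $x_\varepsilon\in\Delta_\varepsilon\subset\sigma$ with $T_\varepsilon(x_\varepsilon)=x_\varepsilon$. By construction $\|x_\varepsilon-T(x_\varepsilon)\|=\|P_\varepsilon(T(x_\varepsilon))-T(x_\varepsilon)\|\le\varepsilon$. Taking a sequence $\varepsilon_k\to 0$ produces $x_k\in\sigma$ with $\|x_k-T(x_k)\|\le\varepsilon_k$. Since $T(x_k)\in T(\sigma)$ and $T(\sigma)$ is precompact, a subsequence satisfies $T(x_{k_j})\to z$ in $B$. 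The estimate $\|x_{k_j}-T(x_{k_j})\|\to 0$ then forces $x_{k_j}\to z$, and because $\sigma$ is closed we have $z\in\sigma$. Continuity of $T$ gives $T(x_{k_j})\to T(z)$, so $T(z)=z$, which is the desired fixed point.

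The main obstacle, and the only nontrivial ingredient beyond the Brouwer theorem, is constructing the projection $P_\varepsilon$ and verifying its key properties: that the denominator is strictly positive everywhere on $T(\sigma)$ (guaranteed by the $\varepsilon$-net property), that $P_\varepsilon$ is continuous (clear from continuity of each $\mu_i$ and of the quotient), that $P_\varepsilon(y)$ stays within $\sigma$ (this uses convexity of $\sigma$ together with $y_i\in\sigma$), and that the uniform bound $\|P_\varepsilon(y)-y\|\le\varepsilon$ holds, which follows since only those $y_i$ within distance $\varepsilon$ of $y$ contribute a positive weight. Once these points are in place, the passage to the limit is a routine application of continuity and precompactness, and the closedness of $\sigma$ ensures the limit lies in the prescribed set.
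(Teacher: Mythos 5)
Your proof is correct: the $\varepsilon$-net, the Schauder projection $P_\varepsilon$ with its three properties (well-definedness, continuity, $\|P_\varepsilon(y)-y\|\le\varepsilon$), the Brouwer step on $\Delta_\varepsilon=\mathrm{conv}\{y_1,\dots,y_N\}$, and the limiting argument using precompactness of $T(\sigma)$, closedness of $\sigma$ and continuity of $T$ are all sound. The paper itself does not prove this proposition; it simply cites Gilbarg--Trudinger (p.\ 280), so you have supplied the standard Schauder argument that the citation points to. The only difference worth noting is that the textbook route proves the theorem first for a compact convex set and then handles the precompact-image case by passing to the closed convex hull of $T(\sigma)$, which requires Mazur's theorem that the closed convex hull of a precompact set in a Banach space is compact; your version applies the finite-dimensional approximation directly to the precompact image and so avoids that auxiliary compactness result, at the cost of carrying the approximation and limit argument yourself. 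Both are complete proofs of the statement as used in the paper.
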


To apply the fixed point theorem, we need   an existence result and regularity up to the boundary for a  Bellman-type equation. In the following results, we denote by $G$ the operator
\begin{eqnarray*}
G[\omega_{i}]:=G(D^{2} \omega_{i}, x)=
\inf_{
\begin{split}
& \quad a_{st} \in \qq  \\ 
&\quad \lbrack a_{st} \rbrack
 \in \mathcal{A}_{\lambda, \Lambda}\\
\end{split}}
\left( a_{st}D_{st} \omega_{i}-\frac{1}{\epsilon} \omega_{i} \sum_{j \neq i} \ueind{j}\right)=\puccin(\omega_{i}) -\frac{1}{\epsilon} \omega_{i} \sum_{j \neq i} \ueind{j},
\end{eqnarray*}
with $\ueind{j}$ fixed positive continuous functions and  $\mathcal{A}_{\lambda, \Lambda}$ the set of  symmetric $n \times n$ real matrices   with eigenvalues   in $[\lambda, \Lambda],$ for $0<\lambda<\Lambda. $  The existence result is Theorem 17.18 in \cite{gilbarg_elliptic_2001},  but we also state it below in the adequate form for our purpose. 

\begin{prop}
\label{prop: general existence}
Let $\Omega $ be a bounded domain in $\rr^{n}$ satisfying the exterior sphere condition for  all $x \in \partial \Omega.$
Let $\ueind{j},$  $j \neq i,$  be given functions, and $a_{st} $ symmetric matrices. 
 Suppose that, for all $s, t, j$, there exists a positive constant $\mu$ such that:
\begin{itemize}
\item[] $a_{st} \in C^{2}(\Omega)$ and $\norm{a_{st}}{C^{2}(\Omega)}\leq \mu \lambda$;
\item[] $\frac{1}{\epsilon}\ueind{j} \in C^{2}(\Omega) $ and $\norm{ \frac{1}{\epsilon}\ueind{j} }{C^{2}(\Omega)}\leq \mu \lambda;$
\end{itemize}
and
\begin{itemize}
\item[] $0\leq \lambda \abs{\xi}^{2}\leq a_{st} \xi_{s} \xi_{t} \leq \Lambda \abs{\xi}^{2}, \quad \mbox{and} \quad \ueind{j} \geq 0.$
\end{itemize}
Then, for any $\phi_{i} \in C(\partial \Omega),$ there exists a unique solution $\omega_{i} \in C^{2} (\Omega) \cap C(\overline{\Omega})$  of 
\[
\left\{ 
\begin{array}{ll} 
 G (D^{2} \omega_{i}, x)=0, & \mbox{in } \: \Omega\\
\omega_{i}= \phi_{i} &\mbox{on } \: \partial \Omega.
\end{array} 
\right. 
\]
  \end{prop}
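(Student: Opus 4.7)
The plan is to reduce the statement to a direct application of Theorem 17.18 in \cite{gilbarg_elliptic_2001}, which provides classical solvability of Dirichlet problems for concave, uniformly elliptic Bellman operators on domains with the exterior sphere condition. All that is really needed is to verify the structural hypotheses for our operator $G$, and then invoke the theorem. I expect the delicate point to be checking the regularity/growth of $G$ as a function of the Hessian and $x$ variables uniformly over the index set $\mathcal{A}_{\lambda,\Lambda}$, rather than any genuinely new PDE argument.

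First I would record the structural properties of $G$. Since $G[\omega]$ is the pointwise infimum over the compact set $\mathcal{A}_{\lambda,\Lambda}$ of the affine operators $\omega \mapsto a_{st}D_{st}\omega - \tfrac{1}{\epsilon}\omega \sum_{j\neq i}\ueind{j}$, it is automatically concave in $D^{2}\omega$, and uniformly elliptic with ellipticity constants $\lambda,\Lambda$ by the eigenvalue characterization of $\puccin$ in \eqref{pucci def}. The zeroth-order coefficient $c(x) := -\tfrac{1}{\epsilon}\sum_{j\neq i}\ueind{j}(x)$ is nonpositive because $\ueind{j}\geq 0$, which gives the required monotonicity of $G$ in $\omega$ and ensures that the comparison principle applies.

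Next I would verify the regularity assumptions of Theorem 17.18. The hypotheses that $a_{st},\tfrac{1}{\epsilon}\ueind{j}\in C^{2}(\Omega)$ with $C^{2}$-norms bounded by $\mu\lambda$ translate, after taking the infimum over $\mathcal{A}_{\lambda,\Lambda}$, into uniform control on the $x$-derivatives of $G(M,x)$; the Hessian dependence is already linear (hence smooth) in $M$. Compactness of $\mathcal{A}_{\lambda,\Lambda}$ together with the smoothness of the coefficients gives joint continuity of $G$ in $(M,x)$ and the concavity in $M$ already noted, which are exactly the structural requirements of the Gilbarg--Trudinger theorem.

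For boundary data, the exterior sphere condition at each $x_{0}\in\partial\Omega$ allows one to build classical upper and lower barriers of the form $c_{1}-c_{2}|x-y|^{-p}$ (with $y$ the center of the exterior ball and $p=p(n,\lambda,\Lambda)$ large), for which $\puccip$ applied to the barrier, and therefore $G$ applied to the barrier, has a definite sign. Together with the continuity of $\phi_{i}$ this produces the continuous attainment of the boundary data needed in Theorem 17.18. Uniqueness follows immediately from the comparison principle (Proposition \ref{prop: comparison principle}): the difference of two solutions is both a sub- and a supersolution of the linearized equation obtained by freezing a minimizing matrix, vanishes on $\partial\Omega$, and so must vanish in $\Omega$. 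The main obstacle, modest as it is, will be checking the quantitative structure conditions in the form precisely required by \cite{gilbarg_elliptic_2001}; the rest is bookkeeping.
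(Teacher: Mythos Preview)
Your proposal is correct and is exactly the approach the paper takes: the paper does not prove this proposition at all but simply states it as a restatement of Theorem 17.18 in \cite{gilbarg_elliptic_2001} adapted to the present setting. Your additional verification of the structural hypotheses (concavity, uniform ellipticity, nonpositive zeroth-order term, barrier construction under the exterior sphere condition) is precisely the bookkeeping implicit in that citation, and the uniqueness argument via comparison is likewise the one the paper records separately as Proposition~\ref{prop: general comparison principle}.
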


 We also need a generalization of the comparison principle that the reader can find on page 443 in \cite{gilbarg_elliptic_2001} and comment on page 446:

\begin{prop}
\label{prop: general comparison principle}
Let $u, \, v \in C(\overline{\Omega})\cap C^{2} (\Omega)$. If $G[u] \geq G[v] $ in $\Omega$ and $u  \leq v$ in $\partial \Omega, $ then $u \leq v$ in $\Omega.$
  \end{prop}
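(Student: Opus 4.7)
The plan is to reduce Proposition \ref{prop: general comparison principle} to a classical linear elliptic maximum principle by linearising $\puccin$ at $D^{2}v$ and then analysing the difference $w := u - v \in C(\overline{\Omega}) \cap C^{2}(\Omega)$.

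First I would exploit the concavity and positive homogeneity of the map $M \mapsto \puccin(M) = \inf_{A \in \mathcal{A}_{\lambda,\Lambda}} \tr(AM)$. At each $x \in \Omega$ the infimum defining $\puccin(v)(x)$ is attained, and is in fact realised by the explicit matrix $A^{\ast}(x) \in \mathcal{A}_{\lambda,\Lambda}$ whose eigenvectors coincide with those of $D^{2}v(x)$ and whose eigenvalues are $\Lambda$ on the negative part of the spectrum of $D^{2}v(x)$ and $\lambda$ on the positive part. Since $v \in C^{2}(\Omega)$, this prescription produces a bounded, measurable, uniformly elliptic coefficient field $x \mapsto A^{\ast}(x)$. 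By construction, $\puccin(v)(x) = \tr(A^{\ast}(x)\,D^{2}v(x))$, whereas $\puccin(u)(x) \leq \tr(A^{\ast}(x)\,D^{2}u(x))$; subtracting yields
\begin{equation*}
\puccin(u)(x) - \puccin(v)(x) \;\leq\; \tr\bigl(A^{\ast}(x)\,D^{2}w(x)\bigr).
\end{equation*}

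Next, set $c(x) := \tfrac{1}{\epsilon} \sum_{j \neq i} \ueind{j}(x) \geq 0$. The hypothesis $G[u] \geq G[v]$ reads $\puccin(u) - c\,u \geq \puccin(v) - c\,v$, hence $\puccin(u) - \puccin(v) \geq c\,w$. Combining the two inequalities, $w$ satisfies the pointwise, linear, uniformly elliptic differential inequality
\begin{equation*}
\tr\bigl(A^{\ast}(x)\,D^{2}w\bigr) - c(x)\,w \;\geq\; 0 \qquad \mbox{in } \Omega,
\end{equation*}
together with $w \leq 0$ on $\partial\Omega$. Because the field $A^{\ast}$ has eigenvalues in $[\lambda,\Lambda]$ and the zero-order coefficient $-c(x)$ is non-positive, the maximum principle for linear uniformly elliptic operators with bounded measurable coefficients and non-positive zero-order term then forces $w \leq 0$ in $\Omega$, which is exactly the claim.

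The only genuine subtlety I expect is that the linearised coefficient field $A^{\ast}(x)$ is in general merely measurable (through its dependence on the spectral decomposition of $D^{2}v(x)$, which may degenerate along the zero locus of an eigenvalue), not continuous in $x$. This is precisely why I would invoke the Alexandrov--Bakelman--Pucci form of the linear maximum principle (cf.\ Theorem 3.2 in \cite{caffarelli_fully_1995}), which applies to $C^{2}$ sub\-so\-lu\-tions with bounded measurable coefficients; everything else then follows from the concavity of $\puccin$ and the sign of $c$.
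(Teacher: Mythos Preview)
The paper does not supply its own proof of this proposition; it simply cites Gilbarg--Trudinger, page~443 (Theorem~17.1) together with the remark on page~446. Your argument is correct and is one of the two standard routes to this result: you linearise the concave map $M\mapsto\puccin(M)$ at $D^{2}v$ by picking a matrix $A^{\ast}(x)\in\mathcal{A}_{\lambda,\Lambda}$ realising the infimum, and then feed the difference $w=u-v$ into the linear weak maximum principle with non-positive zero-order coefficient. The Gilbarg--Trudinger proof referenced by the paper is organised slightly differently---it argues directly at a putative interior positive maximum of $u-v$ using the ellipticity (monotonicity in the Hessian slot) and the monotonicity in the zero-order slot, with the non-strict case handled by the remark on page~446---but the content is the same.

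One small comment on your closing paragraph: the measurability of $A^{\ast}$ is indeed the only point to watch, but you do not need the full Alexandrov--Bakelman--Pucci machinery to deal with it. Since $w\in C^{2}(\Omega)\cap C(\overline{\Omega})$ and the inequality $\tr(A^{\ast}(x)D^{2}w)-c(x)w\geq 0$ holds \emph{pointwise}, the classical weak maximum principle (Theorem~3.1/Corollary~3.2 in Gilbarg--Trudinger) already applies: its proof only uses the ellipticity bounds $\lambda\leq A^{\ast}\leq\Lambda$ and the sign $c\geq 0$ at individual points, so mere boundedness of the coefficients is enough. Invoking ABP is harmless, just heavier than necessary.
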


\subsection{\holder regularity up to the boundary.}

The next Proposition is the \holder regularity up to the boundary for a viscosity solution of an equation of the type  $\puccin(\omega) = f(x) $ in a Lipschitz domain. The proof we present here uses the comparison principle and an inductive construction of barriers. A different proof, by Luis Escauriaza, can be found in   \cite{Escauriaza_apriori_1993}, Lemma 3. Escauriaza uses the Fabes and Stroock inequality to estimate the \holder norm of the solution up to the boundary  in terms of the $\lspace{q}{}{}$ norm of the right-hand side. 

This result is an improvement of Proposition 4.12 and 4.13 in \cite{caffarelli_fully_1995}  for general Lipschitz domains.

\begin{prop}
\label{prop: holder up boundary}
Let $\Omega$ be a Lipschitz domain. Let $\omega \in \cspace{2}{}{(\Omega)}\cap \cspace{}{}{(\overline{\Omega})}$ be a viscosity solution of
\begin{equation}
\label{aux problem}
\left\{
\begin{split}
&\puccin(\omega) = f(x)  \quad  \mbox{in} \quad \Omega,   \\
& \omega = \phi   \quad  \mbox{on} \quad \partial \Omega,\\
  \end{split}
 \right.
\end{equation}
with $f \geq 0 $ and $f \in C(\overline{\Omega})\cap C^{2} (\Omega), $ $\phi \in  \cspace{\beta}{}{(\partial \Omega)}.$ Then $\omega \in \cspace{\gamma}{}{(\overline{\Omega})},$ where $\gamma = \min (\frac{\beta}{2}, \alpha) $ and $\alpha$ is the universal \holder exponent for the interior regularity.
 
  \end{prop}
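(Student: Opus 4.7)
The plan is to establish a uniform H\"older modulus of continuity at each boundary point by an inductive construction of barriers, and then glue this estimate to the interior $C^{\alpha}$ regularity supplied by Proposition \ref{prop: interior holder}. Fix $x_{0}\in\partial\Omega$. Since $\Omega$ is Lipschitz, there is a truncated exterior cone $\mathcal{K}_{x_{0}}\subset\overline{\Omega^{c}}$ with vertex at $x_{0}$ and aperture $\theta_{0}$ depending only on the Lipschitz character of $\Omega$. The global problem reduces to showing the pointwise estimate $|\omega(x)-\phi(x_{0})|\le C|x-x_{0}|^{\gamma}$ on $\Omega\cap B_{r_{0}}(x_{0})$ with $\gamma=\min(\beta/2,\alpha)$; the $C^{\gamma}(\overline{\Omega})$ conclusion then follows by the standard dichotomy $|x-y|\le\tfrac12\min(\dist(x,\partial\Omega),\dist(y,\partial\Omega))$ (apply the interior estimate) versus its negation (triangulate through boundary projections and use $\phi\in C^{\beta}$).

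The central object is a nonnegative function $V$ defined on a neighborhood of $x_{0}$ in $\overline\Omega$, with $V(x_{0})=0$, $V(x)\ge c|x-x_{0}|^{\gamma_{0}}$ for some $\gamma_{0}=\gamma_{0}(n,\lambda,\Lambda,\theta_{0})\in(0,1)$, and $\puccin(V)\le 0$ away from $x_{0}$. I would build $V$ by first producing a positive superharmonic function of the appropriate homogeneity in the complement of $\mathcal{K}_{x_{0}}$ (the classical Laplace cone barrier) and then invoking the pointwise inequality $\puccin(V)\le\lambda\Delta V$ to transfer superharmonicity into a Pucci supersolution. As a self-contained alternative, $V$ can be assembled from explicit tilted radial profiles $|x-z|^{-s}$ with $z$ interior to $\mathcal{K}_{x_{0}}$, using the elementary computation $\puccin(|x-z|^{-s})=s\bigl[\lambda(s+1)-\Lambda(n-1)\bigr]|x-z|^{-s-2}\le 0$ for $s$ small, which will be collected in Appendix \ref{general pucci}. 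Absorbing the nonnegative bounded right-hand side $f$ by a small quadratic correction and rescaling $V$ to dyadic scales $r\le r_{0}$ produces a family $W_{r}$ on $\Omega\cap B_{r}(x_{0})$ that dominates $|\phi(x)-\phi(x_{0})|$ on $\partial\Omega\cap B_{r}(x_{0})$ (using $\phi\in C^{\beta}$) and dominates $\osc_{\Omega\cap B_{r}(x_{0})}\omega$ on $\Omega\cap\partial B_{r}(x_{0})$. Applying the comparison principle of Proposition \ref{prop: general comparison principle} to $\pm(\omega-\phi(x_{0}))$ against $\pm W_{r}$ yields, for universal $\mu,\rho\in(0,1)$,
\begin{equation*}
\osc_{\Omega\cap B_{\rho r}(x_{0})}\omega\;\le\;\mu\,\osc_{\Omega\cap B_{r}(x_{0})}\omega+Cr^{\beta}\Vert\phi\Vert_{C^{\beta}(\partial\Omega)}+Cr^{2}\Vert f\Vert_{L^{\infty}(\Omega)},
\end{equation*}
which on iteration gives the pointwise boundary H\"older estimate; the exponent $\min(\beta/2,\alpha)$ then emerges from balancing this boundary decay against interior regularity in the dichotomy above.

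The hard step is the construction of the Pucci supersolution $V$ adapted to a merely Lipschitz (not $C^{1,1}$) exterior cone with the correct homogeneity $\gamma_{0}$. In the $C^{1,1}$ case the distance function gives an exterior ball and a routine $|x|^{\gamma}$-type barrier suffices; in the Lipschitz case the natural separation of variables $V(x)=r^{\gamma_{0}}g(\sigma)$ for $\puccin$ yields a nonlinear eigenvalue problem on $S^{n-1}\setminus(\mathcal{K}_{x_{0}}\cap S^{n-1})$ whose existence theory is delicate. My intended workaround is to bypass this eigenvalue problem entirely by using the classical Laplace cone barrier and invoking $\puccin(V)\le\lambda\Delta V$, accepting a possibly suboptimal $\gamma_{0}$---which is harmless since the final exponent is capped at $\min(\beta/2,\alpha)$ in any case.
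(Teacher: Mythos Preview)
Your overall architecture---pointwise boundary H\"older estimate via an iterated barrier, then gluing to the interior $C^{\alpha}$ estimate through the usual dichotomy on $\abs{x-y}$ versus $\dist(x,\partial\Omega)$---matches the paper's. The gap is in the barrier itself.

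The inequality $\puccin(V)\le\lambda\,\D V$ is correct, so a harmonic (or superharmonic) Laplace cone barrier $V$ is indeed a $\puccin$-supersolution. But this is not enough for the two-sided bound you claim. The step ``apply comparison to $\pm(\omega-\phi(x_{0}))$ against $\pm W_{r}$'' is not symmetric, because $\puccipm$ are not odd. For the upper bound one has
\[
\puccip\bigl((\omega-\phi(x_{0}))-W_{r}\bigr)\ \ge\ \puccin(\omega)+\puccip(-W_{r})\ =\ f-\puccin(W_{r})\ \ge\ 0,
\]
which is fine. For the lower bound you would need $\puccip\bigl(-(\omega-\phi(x_{0}))-W_{r}\bigr)\ge0$, and the best available inequality gives $-f-\puccip(W_{r})$; this fails because for a harmonic $V$ one has $\puccip(V)=(\Lambda-\lambda)\sum_{e_{i}>0}e_{i}\ge0$, not $\le0$. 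A bounded quadratic correction cannot absorb this term, which behaves like $\abs{x-x_{0}}^{\gamma_{0}-2}$ near the vertex. Your small-$s$ tilted profile $\abs{x-z}^{-s}$ has the same defect: it is $\puccin$-super but generally $\puccip$-sub.

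The paper also avoids the cone eigenvalue problem you flag as the ``hard step,'' but in a simpler way than your workaround. Rather than build a homogeneous barrier with vertex at $x_{0}$, it places at each dyadic scale a ball $B^{1}$ entirely inside the exterior cone (possible with a fixed radius ratio because the cone aperture is uniform) and uses the explicit annular barrier $\psi(x)=M_{1}-M_{2}\abs{x-y}^{-\alpha}$ with $y$ the center of $B^{1}$ and $\alpha\ge\frac{\Lambda(n-1)-\lambda}{\lambda}$ \emph{large}. That choice of $\alpha$ makes $\psi$ a $\puccip$-supersolution (Appendix~\ref{apd for lips: barriers}), which is strictly stronger than what your construction produces. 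The upper bound on $\omega$ is then direct, and for the lower bound one applies the same $\puccip$-supersolution barrier to the $\puccip$-subsolution $\frac{\norm{f}{L^{\infty}}}{2n\lambda}\abs{x}^{2}-\omega$; this is where your quadratic correction actually belongs, on the solution side rather than on the barrier.
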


The proof of this Proposition follows the same lines of the proof of Proposition 4.13, in \cite{caffarelli_fully_1995}. Once the interior regularity and the regularity for an arbitrary point in the boundary is guaranteed  the proof is basically the interplay of these two results depending on how close two points are compared to the maximum of their distance to the boundary. The interior regularity comes from Proposition \ref{prop: interior holder}.  To prove the regularity for an arbitrary point on the boundary,  Proposition \ref{prop: holder point boundary }, we need the couple of Lemmas that  follow.   

The first Lemma establishes  the decay of the solution in concentric balls centered at an external point in the outside cone. The proof uses a standard comparison argument and the use of a barrier function.

\begin{lemma}
\label{lemma: intermediate bound}
Let $\Omega$ be an Lipschitz domain and $C$ an external cone centered at $x_{0} \in \partial \Omega$, with some universal opening. Let $y \in C $ be the center of the  balls  
$B^{1} \subset  B^{2} \subset B^{3}$ such that $B^{1} \in C$,  
and that   \mbox{$ \dist (\partial B^{2} \cap \Omega, x_{0}) > \delta>0.$}  
Let $u$ be a solution of $\puccip (u) \geq 0$ in the viscosity sense in $\Omega$, such that
$u \leq 1$ on $B^{3} \cap \Omega$ 
and 
$u \leq 0$ on  $ B^{3} \cap \partial \Omega$.
Then, there exist $\lambda >0$ such that
\begin{eqnarray*}  
u (x)\leq \lambda <1 \quad \mbox{in} \quad B^{2} \cap \Omega.
\end{eqnarray*}  
 
  \end{lemma}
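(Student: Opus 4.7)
The plan is to construct an explicit classical supersolution of $\puccip=0$ on $\Omega \cap B^{3}$ that dominates $u$ on the boundary of this region, and then invoke the comparison principle (Proposition \ref{prop: comparison principle}). Since $u$ is a viscosity subsolution of $\puccip = 0$ (i.e.\ $u \in \underbar{S}(\lambda,\Lambda,0)$), the argument reduces to producing the right explicit barrier.

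I would begin with the radial building block $h(x) := |x-y|^{-\gamma}$ and compute the spectrum of $D^{2} h$ directly: one positive radial eigenvalue $\gamma(\gamma+1)|x-y|^{-\gamma-2}$ and $n-1$ negative tangential eigenvalues $-\gamma|x-y|^{-\gamma-2}$. Plugging these into the definition of $\puccin$ gives
\[
\puccin(h)(x) \,=\, \gamma\,|x-y|^{-\gamma-2}\bigl[\lambda(\gamma+1) - \Lambda(n-1)\bigr],
\]
which is strictly positive once $\gamma$ is fixed sufficiently large depending only on $n,\lambda,\Lambda$. Equivalently, $-h$ satisfies $\puccip(-h) < 0$ in $\rr^{n}\setminus\{y\}$.

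Next I would assemble the barrier
\[
v(x) := a - b\,|x-y|^{-\gamma}, \qquad b := \bigl(r_{1}^{-\gamma}-r_{3}^{-\gamma}\bigr)^{-1}, \qquad a := 1 + b\,r_{3}^{-\gamma},
\]
so that $v\equiv 1$ on $\partial B^{3}$ and $v\equiv 0$ on $\partial B^{1}$. By linearity $\puccip(v) = -b\,\puccin(h) \leq 0$ classically on $\Omega \cap B^{3}$, a set that does not contain $y \in (B^{1})^{\circ} \subset \Omega^{c}$. The boundary comparison splits into two pieces: on $\Omega \cap \partial B^{3}$ one has $v=1 \geq u$ by the first hypothesis; on $\partial\Omega \cap B^{3}$ the inclusion $B^{1}\subset C \subset \Omega^{c}$ forces $|x-y|\geq r_{1}$, hence $v(x) \geq a - b\,r_{1}^{-\gamma} = 0 \geq u(x)$ by the second hypothesis. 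Thus $u \leq v$ on $\partial(\Omega \cap B^{3})$, and the comparison principle (Proposition \ref{prop: comparison principle}) applied to $u-v \in \underbar{S}(\lambda,\Lambda,0)$ yields $u \leq v$ throughout $\Omega \cap B^{3}$. Restricting to $x\in B^{2}\cap\Omega$, where $|x-y|\leq r_{2} < r_{3}$, one obtains
\[
u(x) \,\leq\, v(x) \,\leq\, 1 - \frac{r_{2}^{-\gamma}-r_{3}^{-\gamma}}{r_{1}^{-\gamma}-r_{3}^{-\gamma}} \,<\, 1,
\]
which is the asserted decay.

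The main obstacle I anticipate is ensuring that the resulting constant is universal rather than dependent on the particular configuration $(x_{0},y,B^{1},B^{2},B^{3})$. The two geometric hypotheses---a universal opening of the exterior cone $C$ and the uniform gap $\dist(\partial B^{2}\cap\Omega,x_{0})>\delta$---are precisely what is needed to place $y$ at a distance from $x_{0}$ comparable to $\delta$ and to choose the radii $r_{1}<r_{2}<r_{3}$ with universally bounded ratios $r_{2}/r_{1}$ and $r_{3}/r_{1}$. Once this geometric bookkeeping is in place the explicit formula above produces a universal constant in $(0,1)$ depending only on $n$, $\lambda$, $\Lambda$, the cone opening, and $\delta$, which serves as the $\lambda$ of the statement.
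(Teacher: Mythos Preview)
Your proof is correct and follows essentially the same route as the paper: both construct the radial barrier $v(x)=a-b|x-y|^{-\gamma}$ on the annulus $B^{3}\setminus B^{1}$ (the paper packages this as Lemma~\ref{lemma: barrier super}), check that $\puccip(v)\le 0$ for $\gamma$ large enough, and then apply the comparison principle on $\Omega\cap B^{3}$ to conclude $u\le v$ and read off the strict bound on $B^{2}$. The only difference is cosmetic---you build the barrier inline, while the paper cites its appendix lemma.
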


\begin{proof}
Since the domain is Lipschitz there exists a cone $C$ with opening equal to $\rho,$ such that for any point of the boundary we can place the cone with opening $\rho$ and vertex at that point  such that $C \cap \Omega = \emptyset.$

\begin{figure}[h]
\begin{center}
\includegraphics[scale=0.4]{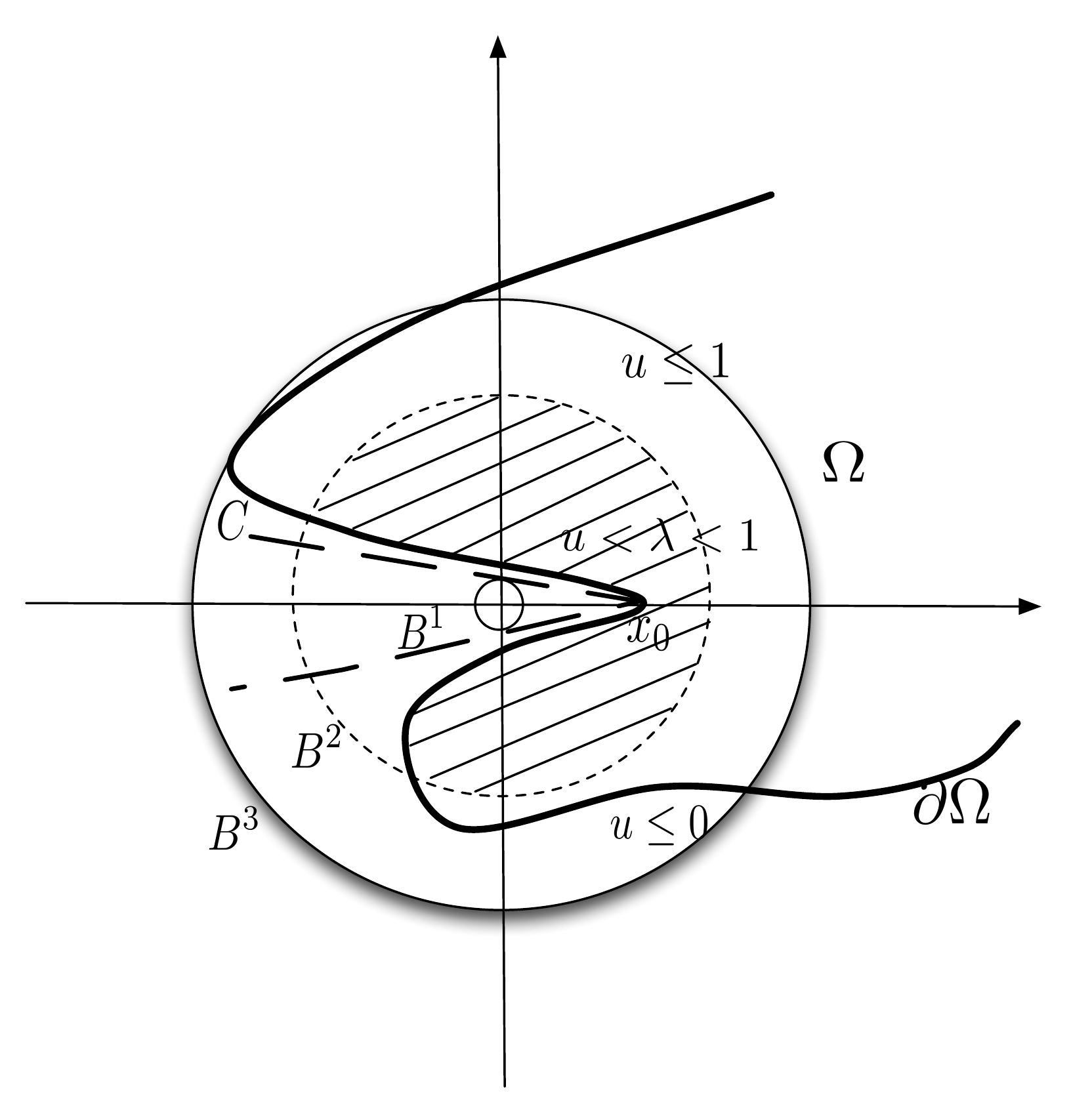}
\caption{Reference decay.}
\label{fig:reference_decay}
\end{center}
\end{figure}

Without loss of generality, take the cone with origin at $x_{0} \in \partial \Omega$ and with axis $e_{n}$, 
$$ C=\left\{ x: (x-x_{0})\cdot e_{n}<  - \rho \sqrt{\sum_{i=1}^{n-1} (x-x_{0})_{i}^{2}} \right\},
$$
and consider the origin as the center of the balls $B^{1}, B^{2}, B^{3}$ as illustrated in Figure \ref{fig:reference_decay}.

Applying Lemma \ref{lemma: barrier super} with $M=\frac{1}{r}$ where $r$ is the radius of $B^{3}$ and $\frac{ar}{b}$
equal to the radius of $B^{1}$ we obtain a supersolution $\psi$, $\puccip(\psi)\leq 0$ in $B^{3}\backslash B^{1},$ such that $\psi(x) =0 $ in $\partial B^{1}$ and $\psi (x)=1$ in $\partial B^{3}.$ And so
\begin{eqnarray*}   
u (x) \leq \psi (x) \quad \mbox{on} \quad \partial( B^{3}\cap \Omega)
\end{eqnarray*}  
Applying the comparison principle stated in Proposition \ref{prop: comparison principle} we can conclude that 
\begin{eqnarray*}  
u (x) \leq \psi (x) \quad \mbox{in} \quad B^{3}\cap \Omega
\end{eqnarray*}

\begin{figure}[h]
\begin{center}
\includegraphics[scale=0.4]{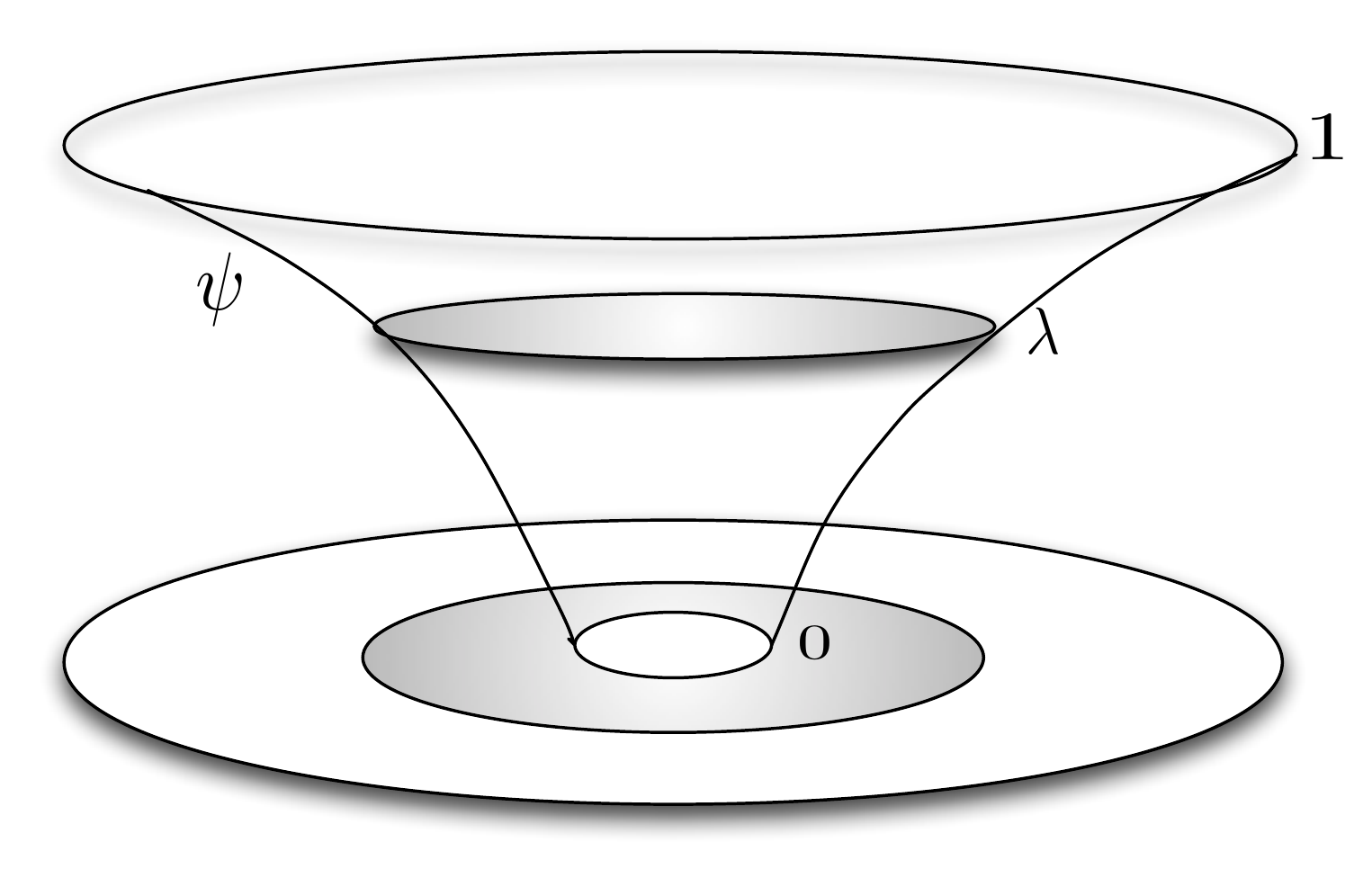}
\caption{Upper bound for $u$ due to the supersolution $\psi$. }
\label{fig: barrier bound}
\end{center}
\end{figure}

Let $\lambda=\psi (x) <1 $ for $x \in \partial B^{2} \cap \Omega.$ As $\psi$ is an increasing function, see Figure \ref{fig: barrier bound},  we can conclude that 
for $x \in B^{2}\cap \Omega$ we have that $u(x) \leq \lambda<1$ as we claim. 
\end{proof}

The next Lemma is important for the iteration construction. Basically we prove that if the boundary data is bounded from above in half of the unit ball centered on a boundary point, and the function is bounded in the unit ball  then in one fourth of the ball, the function decays by a fixed value.

\begin{lemma}
\label{lemma: reference decay }
Let $\Omega$ be a Lipschitz domain and $C$ an external cone with universal opening. Let $0 \in \partial \Omega $ be the origin of the cone. 
Let $v$  be a solution of $\puccip (u) \geq 0$ in the viscosity sense in $\Omega$, such that 
$v \leq 1$ on $B_{1}(0) \cap \Omega,$ $v(0)=0$
and 
$v (x) \leq \left(\frac{1}{2}\right)^{\beta}$ on  $ B_{\frac{1}{2}}(0) \cap \partial \Omega$,
for some $\beta >0$. 
Then, there exists a constant  $\left(\frac{1}{2}\right)^{\beta}<\mu < 1$  such that
\begin{eqnarray*}  
v(x) \leq \mu  \quad \mbox{in} \quad B_{\frac{1}{4}}(0) \cap \Omega.
\end{eqnarray*}

  \end{lemma}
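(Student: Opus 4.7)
The plan is to reduce to Lemma \ref{lemma: intermediate bound} via an affine normalization that absorbs the boundary bound $(1/2)^{\beta}$. Define
$$u(x) := \frac{v(x) - (1/2)^{\beta}}{1 - (1/2)^{\beta}}.$$
Because $\puccip$ is invariant under translation by constants and is positively homogeneous, $u$ is still a viscosity subsolution, $\puccip(u)\geq 0$ in $\Omega$. The hypotheses on $v$ translate directly into $u\leq 1$ on $B_{1}(0)\cap\Omega$ and $u\leq 0$ on $B_{1/2}(0)\cap\partial\Omega$, which is exactly the boundary behavior required to invoke the preceding lemma.

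Next I would produce the three concentric balls required in Lemma \ref{lemma: intermediate bound}. Since $\Omega$ is Lipschitz, the exterior cone $C$ at $0$ has a universal opening $\rho$ (depending only on the Lipschitz constant of $\partial\Omega$). Place $y$ in the interior of $C$, on its axis at a small universal distance $t$ from $0$, and pick radii $r_{1}<r_{2}<r_{3}$ so that $B^{1}=B_{r_{1}}(y)\subset C$, $B^{3}=B_{r_{3}}(y)\subset B_{1/2}(0)$, and $B^{2}=B_{r_{2}}(y)\supset B_{1/4}(0)$. A short geometric check (with, say, $t\approx 1/16$) shows that all four parameters $t,r_{1},r_{2},r_{3}$, together with $\dist(\partial B^{2}\cap\Omega,0)\geq r_{2}-t$, can be chosen universally in terms of $\rho$. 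Since $u\leq 1$ on $B^{3}\cap\Omega\subset B_{1}(0)\cap\Omega$ and $u\leq 0$ on $B^{3}\cap\partial\Omega\subset B_{1/2}(0)\cap\partial\Omega$, Lemma \ref{lemma: intermediate bound} applied to $u$ produces a universal $\lambda<1$ with $u\leq\lambda$ on $B^{2}\cap\Omega$, and in particular on $B_{1/4}(0)\cap\Omega$.

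Undoing the normalization then yields
$$v(x)\leq \lambda\cdot 1+(1-\lambda)(1/2)^{\beta}=:\mu\qquad\text{in }B_{1/4}(0)\cap\Omega,$$
so $\mu$ is a convex combination of $1$ and $(1/2)^{\beta}$. Because $0<\lambda<1$ and $\beta>0$, this immediately gives $(1/2)^{\beta}<\mu<1$, which is the claimed constant. The only non-routine step is the geometric claim in the middle paragraph: one must verify that the axial distance $t$ and the three radii can all be chosen depending only on the Lipschitz character of $\partial\Omega$ (through $\rho$), so that the resulting $\lambda$, and hence $\mu$, is universal. This is the one place where the \emph{Lipschitz} (rather than merely continuous) regularity of $\partial\Omega$ is essential, through the uniform exterior cone. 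The hypothesis $v(0)=0$ plays no role in this single step, but it is the natural normalization that will be used when iterating this decay lemma across dyadic balls to extract \holder continuity at the boundary point $0$.
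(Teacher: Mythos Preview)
Your proof is correct and follows essentially the same approach as the paper's: subtract $(1/2)^{\beta}$, normalize, and apply Lemma~\ref{lemma: intermediate bound} with three concentric balls centered on the cone axis so that $B^{3}$ lies in the region where the boundary bound holds and $B^{2}$ covers $B_{1/4}(0)$. The only cosmetic difference is that the paper first dilates by a factor of $2$ (working with $\omega(x)=\frac{v(x/2)-(1/2)^{\beta}}{1-(1/2)^{\beta}}$ on $B_{1}$) before placing the balls, whereas you place them directly inside $B_{1/2}(0)$; the two are equivalent.
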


\begin{proof}

By hypothesis,  
\begin{eqnarray*}  
v(x) \leq \left( \frac{1}{2}\right)^{\beta}, \quad  x \in \partial \Omega \cap  B_{\frac{1}{2}}(0).\end{eqnarray*}    
Let 
 $$\omega (x)= \frac{v(\frac{x}{2}) - (\frac{1}{2})^{\beta}}{1 -(\frac{1}{2})^{\beta} } \quad \mbox{for} \quad x \in B_{1}(0) \cap \Omega_{\frac{1}{2}},$$
 where $\Omega_\frac{1}{2}= \{2 x: x \in \Omega \}.$ 
 $\omega$ 
 satisfies all the hypotheses of Lemma \ref{lemma: intermediate bound} with:
 \begin{enumerate}
 \item $x_{0}=0;$
 \item $C$ the uniform external cone with axis without loss of generality equals to $e_{n}$  axis;
 \item $B^{1}= B_{r}(y)$ with $y=(0, \cdots, 0, - \frac{1}{10})$ and $r \leq \dist (y, \partial C), r= \frac{\rho}{10};$
  \item $B^{2}= B_{\frac{7}{10}}(y)$;
 \item $B^{3}= B_{\frac{4}{5}}(y)$.   
\end{enumerate}
Observe that 
$B^{3} \subset B_{1}(0)$ and that 
$ B_{\frac{1}{2}}(0)  \subset    B^{2}.$
Then  by Lemma \ref{lemma: intermediate bound} 
\begin{eqnarray*}  
\omega (x )\leq \lambda <1 \quad \mbox{in} \quad B^{2} \cap \Omega_{\frac{1}{2}}
\end{eqnarray*}   
and so we have also that 
\begin{eqnarray*}   
\frac{v(\frac{x}{2}) - (\frac{1}{2})^{\beta}}{1 -\left(\frac{1}{2}\right)^{\beta} } \leq \lambda \Leftrightarrow v(\frac{x}{2}) \leq \lambda (1 -\left(\frac{1}{2}\right)^{\beta} ) + \left(\frac{1}{2}\right)^{\beta}= \underbrace{\lambda + (1-\lambda)\left(\frac{1}{2}\right)^{\beta}}_{\mu}  \qquad \mbox{for} \quad x \in  B_{\frac{1}{2}}(0)  \cap \Omega_{\frac{1}{2}},
\end{eqnarray*}  
$\left(\frac{1}{2}\right)^{\beta}\leq \mu \leq 1.$
Then, we obtain that 
\begin{eqnarray*}  
v(z) \leq \mu \qquad z \in B_{\frac{1}{4}}(0) \cap \Omega.
\end{eqnarray*}  
\end{proof}

Now we will be able to prove an iterative decay illustrated in Figure \ref{fig:iteractivedecay}:

\begin{center}
\hspace{0.5in}
\begin{figure}[h]
\includegraphics[scale=0.4]{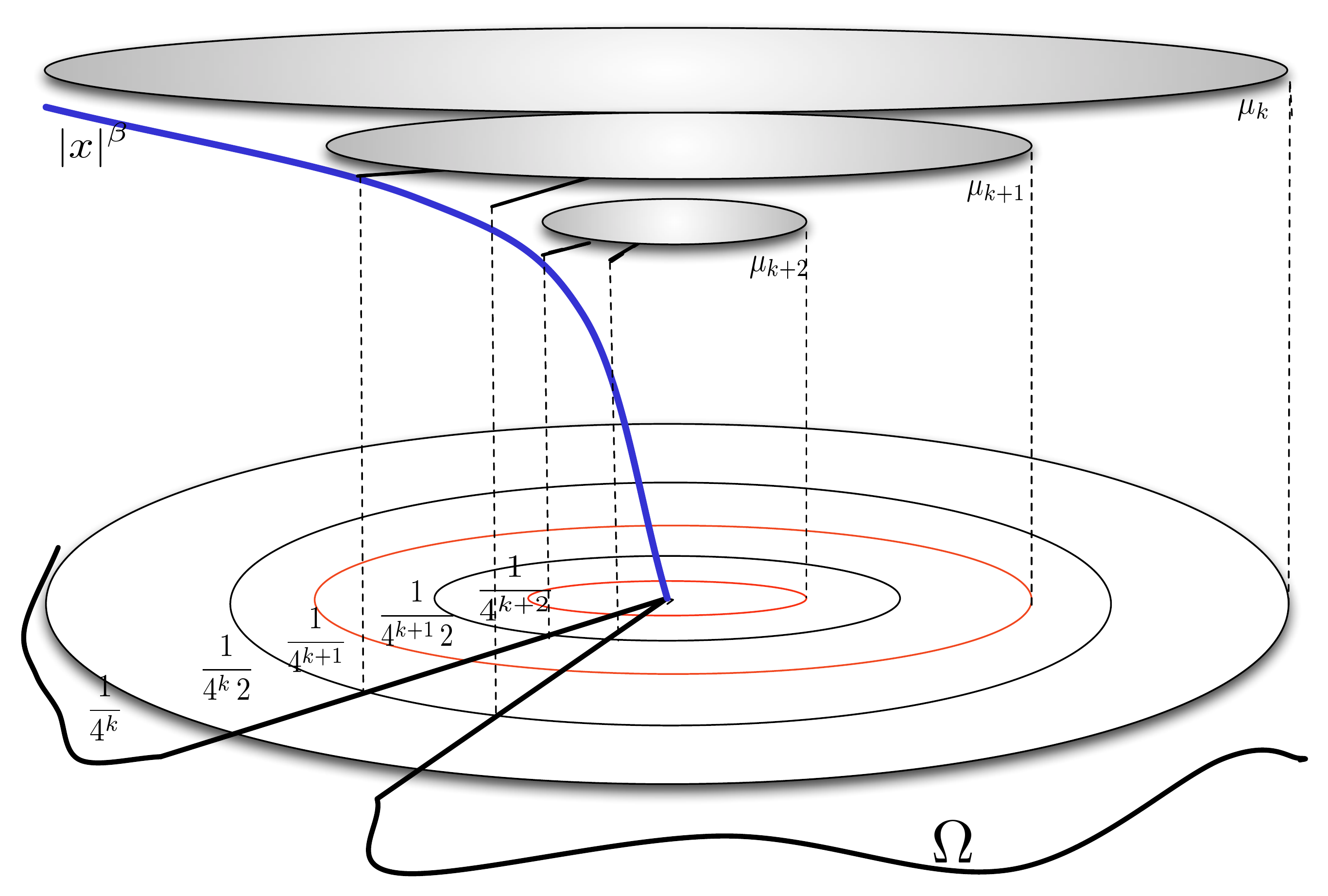}
\caption{Iterative decay.}
\label{fig:iteractivedecay}
\end{figure}
\end{center}

\begin{lemma}
\label{lemma: iteration decay }
Let $\Omega$ be a Lipschitz domain and $C$ an external cone with universal opening. Let $0 \in \partial \Omega $ be the origin of the cone. 
Let $v$  be a solution of $\puccip (u) \geq 0$ in the viscosity sense in $\Omega$, such that 
$v \leq \mu_{k}$ on $B_{\frac{1}{4^{k}}}(0) \cap \Omega,$ ($\mu_{0}=1$), $v(0)=0$
and 
$v (x) \leq \left(\frac{1}{4^{k}\,2}\right)^{\beta}$ on  $ B_{\frac{1}{4^{k}\, 2}}(0) \cap \partial \Omega$. 
Then, there exist a constant, $\mu_{k+1},$  $  \mu_{k+1}:= \lambda  \mu_{k} + (1-\lambda)\left(\frac{1}{4^{k}\,2}\right)^{\beta}$ for some $\lambda \in (0,1)$ universal,  such that
$$
v(x) \leq \mu_{k+1} \quad \mbox{in} \quad B_{\frac{1}{4^{k+1}}}(0) \cap \Omega.
$$

 \end{lemma}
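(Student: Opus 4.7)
The plan is to reduce this statement to Lemma \ref{lemma: intermediate bound} by rescaling and normalizing $v$ so that we are back in the situation treated in Lemma \ref{lemma: reference decay }. The key point that must be monitored is that all the geometric data (cone opening, ellipticity constants, radii of the auxiliary balls $B^1,B^2,B^3$) are scale invariant, so the constant $\lambda$ we eventually extract from Lemma \ref{lemma: intermediate bound} is the \emph{same} universal constant for every $k$.

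First I would zoom in by the factor $4^{k}\cdot 2$. Define $\tilde v(x)=v\!\left(x/(4^{k}\cdot 2)\right)$ on the rescaled domain $\Omega':=(4^{k}\cdot 2)\,\Omega$. Since $\puccip$ is positively homogeneous in $D^{2}$, $\tilde v$ still satisfies $\puccip(\tilde v)\ge 0$ in the viscosity sense in $\Omega'$. The hypotheses translate into
$$
\tilde v \leq \mu_{k}\ \text{ on } B_{2}(0)\cap\Omega',\qquad
\tilde v(0)=0,\qquad
\tilde v \leq \left(\tfrac{1}{4^{k}\cdot 2}\right)^{\beta}\ \text{ on } B_{1}(0)\cap\partial\Omega'.
$$
Moreover, because $\Omega$ is Lipschitz, the external cone with universal opening $\rho$ at $0\in\partial\Omega$ is preserved by dilation, so $\Omega'$ admits the same external cone $C$ at the origin.

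Next, normalize. Set
$$
\hat w(x)=\frac{\tilde v(x)-\left(\tfrac{1}{4^{k}\cdot 2}\right)^{\beta}}{\mu_{k}-\left(\tfrac{1}{4^{k}\cdot 2}\right)^{\beta}},
$$
which is well defined provided $\mu_{k}>\left(1/(4^{k}\cdot 2)\right)^{\beta}$; this is easily checked by induction from the recurrence (and if equality happens the conclusion is immediate from the induction hypothesis since then $\mu_{k+1}=\mu_{k}$). Because subtracting a constant and dividing by a positive constant preserve $\puccip(\cdot)\ge 0$, the function $\hat w$ is still a viscosity supersolution in $\Omega'$, and by construction
$$
\hat w\leq 1 \ \text{ on } B_{1}(0)\cap\Omega',\qquad \hat w\leq 0 \ \text{ on } B_{1}(0)\cap\partial\Omega'.
$$

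Now apply Lemma \ref{lemma: intermediate bound} to $\hat w$ in $\Omega'$, using the same choice of auxiliary balls as in the proof of Lemma \ref{lemma: reference decay }: take $y=(0,\ldots,0,-1/10)\in C$ and set $B^{1}=B_{\rho/10}(y)$, $B^{2}=B_{7/10}(y)$, $B^{3}=B_{4/5}(y)$, so that $B^{3}\subset B_{1}(0)$ and $B_{1/2}(0)\subset B^{2}$. Lemma \ref{lemma: intermediate bound} yields a universal $\lambda\in(0,1)$ (depending only on the ellipticity constants and on $\rho$, hence independent of $k$) such that $\hat w\leq \lambda$ on $B^{2}\cap\Omega'\supset B_{1/2}(0)\cap\Omega'$. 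Unwinding the normalization gives
$$
\tilde v(x)\leq \lambda\,\mu_{k}+(1-\lambda)\left(\tfrac{1}{4^{k}\cdot 2}\right)^{\beta}=\mu_{k+1}\qquad\text{on }B_{1/2}(0)\cap\Omega',
$$
and undoing the $4^{k}\cdot 2$ dilation turns this into $v\leq \mu_{k+1}$ on $B_{1/4^{k+1}}(0)\cap\Omega$, which is the desired inequality. The only real subtlety is confirming that $\lambda$ from Lemma \ref{lemma: intermediate bound} does not deteriorate with $k$, but since the cone opening and the ball configuration are unchanged by the scaling, this is automatic.
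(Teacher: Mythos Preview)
Your proof is correct and follows essentially the same route as the paper: rescale, subtract the boundary bound, divide by the oscillation, and invoke the earlier decay lemma. The only organizational difference is that the paper rescales by $4^{k}$ and appeals to Lemma~\ref{lemma: reference decay } (which itself rescales by $2$ and calls Lemma~\ref{lemma: intermediate bound}), whereas you fold both rescalings into one step by dilating by $4^{k}\cdot 2$ and applying Lemma~\ref{lemma: intermediate bound} directly; the resulting constant $\lambda$ is the same. One small slip: after normalizing you call $\hat w$ a ``viscosity supersolution,'' but $\puccip(\hat w)\ge 0$ makes it a \emph{sub}solution in the terminology used here---the argument is unaffected.
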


\begin{proof}
By scaling and dilation, define for $x \in B_{1}(0)$
\begin{eqnarray*}  
\omega(x)= \frac{v(\frac{x}{4^{k}})- \left( \frac{1}{4^k\, 2}\right)^{\beta}}{ \mu_{k}-\left(\frac{1}{4^{k}\, 2}\right)^{\beta}}.
\end{eqnarray*}  
Since $\omega $ satisfies the hypotheses of Lemma \ref{lemma: reference decay }, considering this dilation and scaling,
we see that
\begin{eqnarray*}  
\omega (x )\leq \lambda <1 \quad \mbox{in} \quad B_{\frac{1}{4}} (0) \cap \Omega
\end{eqnarray*}  
and so, like in the previous proof,  we have also that 
\begin{eqnarray*}  
\frac{v(\frac{x}{4^{k}})- \left( \frac{1}{4^k\, 2}\right)^{\beta}}{ \mu_{k}-\left(\frac{1}{4^{k}\, 2}\right)^{\beta}} \leq \lambda \Leftrightarrow  v(\frac{x}{4^{k}}) \leq \underbrace{\lambda \mu_{k}+ (1-\lambda)\left(\frac{1}{4^{k}\, 2}\right)^{\beta}}_{\mu_{k+1}}  \qquad \mbox{for} \quad x \in  B_{\frac{1}{4}}(0)  \cap \Omega.
\end{eqnarray*}  
Therefore,
\begin{eqnarray*}  
v(y)\leq \mu_{k+1}  \qquad \mbox{for} \quad y \in  B_{\frac{1}{4^{k+1}}}(0)  \cap \Omega,
\end{eqnarray*}  
with
$\left(\frac{1}{4^{k}\, 2}\right)^{\beta}\leq \mu_{k+1} \leq \mu_{k}.$ This finishes the proof.
\end{proof}

\begin{remark}
Observe that the decay at each step of this iteration is constant and equal to $C_{0} \left(\frac{1}{4^k}\right)^{\alpha}$ for $\alpha$ much smaller than $\beta$ and $C_{0}$ a large positive constant. 
In fact, there exit constants $C_{0}$ and $\alpha$ such that $\mu_{k}\leq C_{0} \left(\frac{1}{4^k}\right)^{\alpha}.$ By induction, for $k=0$ the result is true by Lemma \ref{lemma: reference decay } for $C_{0}\geq 1$. Assuming the result valid for a general $k$, we have that 
\begin{eqnarray*}  
\mu_{k+1}=\lambda \mu_{k} + (1-\lambda) \left(\frac{1}{4^k \,2}\right)^{\beta} \leq  \frac{1}{4^{\alpha (k+1)}} \left(    4^{\alpha}  \lambda C_{0}+ (1-\lambda) \frac{1}{2^{\beta-2\alpha} 4^{k (\beta-\alpha)}} \right)
\end{eqnarray*}  
Take $\alpha =\epsilon \beta$ and such that $  4^{\alpha}  \lambda<1 $ then 
\begin{eqnarray*}  
\left(    4^{\alpha}  \lambda C_{0}+ (1-\lambda) \frac{1}{2^{\beta-2\alpha} 4^{k (\beta-\alpha)}} \right) \leq C_{0}
\end{eqnarray*}  
for $C_{0}$ large constant since,
\begin{eqnarray*}  
  4^{\alpha}  \lambda C_{0}+ (1-\lambda) \frac{1}{2^{\beta-2\alpha} 4^{k (\beta-\alpha)}} =   \underbrace{4^{\alpha}  \lambda}_{ <1} C_{0}+ (1-\lambda) \frac{1}{2^{\beta(1-2\epsilon )} 4^{k \beta(1-\epsilon )}} \leq C_{0} 
 \end{eqnarray*}  
 \begin{eqnarray*}  
\Leftrightarrow C_{0} \geq (1-\lambda) \frac{1}{(1 -  4^{\alpha}  \lambda)2^{\beta(1-2\epsilon )} 4^{k \beta(1-\epsilon )}}.  
\end{eqnarray*}

\end{remark}

\begin{remark}
\label{remark: valid for puccip}
Note that Lemmas \ref{lemma: intermediate bound} - \ref{lemma: iteration decay } are valid for  $v$ a viscosity solution of problem (\ref{aux problem}).
\end{remark}

\begin{prop}[\holder regularity up to the boundary]
\label{prop: holder point boundary }
Let $\Omega$ be an Lipschitz domain and $C$ an external cone with universal opening that only depends on the domain. Let $x_{0} \in \partial \Omega $ be the origin of the cone. 
Let $v$ be a viscosity solution of problem (\ref{aux problem}) such that 
$\abs{v(x)} \leq 1$ on $B_{1}(x_{0}) \cap \Omega,$ $v(x_{0})=0$
and 
$\abs{v (x)} \leq \abs{x}^{\beta}$ on  $ B_{1}(x_{0}) \cap \partial \Omega$. 
Then, there exist constants  $C>0$ and $\alpha<< \beta$ such that
 \begin{eqnarray*}  
\sup_{x \in \overline{B}_{1}(x_{0}) \cap \overline{\Omega}}\frac{\abs{v(x) -v(x_{0})}}{\abs{x-x_{0}}^{\alpha}}\leq C 
\end{eqnarray*}  
and $C=C(\norm{f}{\lspace{\infty}{}}).$
  \end{prop}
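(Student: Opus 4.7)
The plan is to iterate the boundary decay Lemma \ref{lemma: iteration decay } so as to obtain a pointwise H\"older estimate at $x_0$ from both sides; after translation we may assume $x_0 = 0$, so $v(0) = 0$. The upper bound $v(x) \leq C\abs{x}^\alpha$ follows directly from the iteration. Because $\puccin(v) = f \geq 0$ in the viscosity sense we have $\puccip(v) \geq \puccin(v) \geq 0$, so the hypotheses of Lemma \ref{lemma: iteration decay } hold at the base scale $k=0$ with $\mu_0 = 1$, and the boundary assumption $\abs{v(x)} \leq \abs{x}^\beta$ supplies $v(x) \leq \left(1/(4^k \cdot 2)\right)^\beta$ on $B_{1/(4^k \cdot 2)}(0) \cap \partial\Omega$ at every dyadic scale (Remark \ref{remark: valid for puccip} guarantees the lemma applies). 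Iterating produces a sequence $\mu_k$ with $v \leq \mu_k$ in $B_{1/4^k}(0) \cap \Omega$; the bound $\mu_k \leq C_0 (1/4^k)^\alpha$ from the remark following Lemma \ref{lemma: iteration decay }, combined with a dyadic interpolation (select $k$ with $1/4^{k+1} < \abs{x} \leq 1/4^k$), then yields $v(x) \leq C\abs{x}^\alpha$ for $\abs{x} \leq 1/4$, and the $L^\infty$ bound absorbs the regime $\abs{x} \geq 1/4$.

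For the lower bound $v(x) \geq -C\abs{x}^\alpha$ one would like to apply the same iteration to $-v$, but $\puccip(-v) = -\puccin(v) = -f$ may be negative, so $-v$ is not the solution of an aux problem with nonnegative right-hand side. The remedy is a quadratic corrector: define $W(x) = \frac{\norm{f}{\lspace{\infty}{}}}{2n\lambda}\abs{x}^2$, so that $D^2 W = \frac{\norm{f}{\lspace{\infty}{}}}{n\lambda} I$ has only positive eigenvalues and $\puccin(W) = \norm{f}{\lspace{\infty}{}}$. By the Pucci superadditivity inequality $\puccip(A + B) \geq \puccip(A) + \puccin(B)$ (Appendix \ref{general pucci}), the function $\tilde v := -v + W$ satisfies $\puccip(\tilde v) \geq -\norm{f}{\lspace{\infty}{}} + \norm{f}{\lspace{\infty}{}} = 0$ in the viscosity sense. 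Moreover $\tilde v(0) = 0$, $\tilde v$ is bounded on $B_1(0) \cap \Omega$ by a constant depending only on $\norm{f}{\lspace{\infty}{}}$, and on the boundary $\abs{\tilde v(x)} \leq \abs{x}^\beta + C\abs{x}^2 \leq C'\abs{x}^{\min(\beta,2)}$. After renormalization, the iteration just used on $v$ applies to $\tilde v$ verbatim and delivers $\tilde v(x) \leq C\abs{x}^\alpha$ with $\alpha$ smaller than both $\beta$ and $2$; hence $-v(x) \leq \tilde v(x) \leq C\abs{x}^\alpha$.

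Combining the two one-sided estimates and using $v(0) = 0$ gives $\abs{v(x) - v(x_0)} \leq C\abs{x - x_0}^\alpha$ for all $x \in \overline{B}_1(x_0) \cap \overline{\Omega}$, with $C = C(n,\lambda,\Lambda,\norm{f}{\lspace{\infty}{}})$, which is the claimed estimate. The principal obstacle is the lower bound: Lemma \ref{lemma: iteration decay } is genuinely one-sided, and its hypothesis $\puccip(u) \geq 0$ is not preserved under $u \mapsto -u$ when $f \neq 0$. The quadratic corrector $W$ absorbs the ``wrong sign'' of $-f$ exactly; everything else (the iteration, the dyadic interpolation, the control on the boundary of $\tilde v$) is routine, though one must be mindful that reducing to the case $\beta \leq 2$, which is harmless since only $\alpha \ll \beta$ is needed, keeps the boundary decay of $\tilde v$ of the right order.
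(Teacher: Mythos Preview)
Your proof is correct and follows essentially the same route as the paper: iterate Lemma~\ref{lemma: iteration decay } for the upper bound, then for the lower bound add the quadratic corrector $W(x)=\dfrac{\norm{f}{\lspace{\infty}{}}}{2n\lambda}\abs{x}^2$ to $-v$ so that $\puccip(W-v)\geq \puccin(W)-\puccin(v)=\norm{f}{\lspace{\infty}{}}-f\geq 0$, and rerun the same iteration. The only cosmetic difference is that you phrase the Pucci inequality as $\puccip(A+B)\geq \puccip(A)+\puccin(B)$ while the paper writes $\puccip(\omega)\geq \puccin(W)-\puccin(v)$ directly; the content is identical.
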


\begin{remark}
The constant $C$ in the previous  proposition would depend on $\epsilon$ if this result was to be applied to our main problem. The uniform  \holder regularity in $\epsilon$ will be proved in the next section and does not depend on this result though.
\end{remark}

\begin{proof}
Assume by translation invariance that $x_{0}=0.$ Note that $v(0)=v(x_{0})=0.$
So if we prove that 
\begin{eqnarray*}  
\abs{ v(x)} \leq C \abs{x}^{\alpha},
\end{eqnarray*}  
the result follows. Observe that on the boundary the regularity of the boundary data gives the result directly.
Let $k$ be such that 
\begin{eqnarray*}  
\frac{1}{4^{k+1}} \leq  \abs{x} \leq \frac{1}{4^k }.
\end{eqnarray*}  
 Using Lemma \ref{lemma: reference decay } followed by Lemma \ref{lemma: iteration decay } (see Remark \ref{remark: valid for puccip}) we can assume that for $x $ such that $\abs{x}=\rho < \frac{1}{4^k} $ we have:
\begin{eqnarray*}  
v(x) \leq \mu_{k}.
\end{eqnarray*}  
Then, taking in account the previous remark, we also have,
\begin{eqnarray*}  
v(x) \leq C_{0}\left( \frac{1}{4^{k}}\right)^{\alpha} 
\end{eqnarray*}  
but then 
\begin{eqnarray*}  
v(x)\leq C_{0}4^{\alpha} \left( \frac{1}{4^{k+1}}\right)^{\alpha} \leq C_{0} 4^{\alpha} \abs{x}^{\alpha}.
\end{eqnarray*}  
To obtain the other inequality observe that $-v(x)\leq 1.$ 
So if we consider instead of $-v$ the function
\begin{eqnarray*}  
\omega(x)=\frac{\norm{f}{\lspace{\infty}{}} }{2n\lambda} \abs{x}^{2} -v(x).
\end{eqnarray*}  
we have that,  for $x \in B_{1}(0),$ 
\begin{eqnarray*}  
\puccip (\omega(x)) \geq \puccin \left(\frac{\norm{f}{\lspace{\infty}{}} }{2n\lambda} \abs{x}^{2}\right) - \puccin (v) \geq \norm{f}{\lspace{\infty}{}} -f(x) \geq 0.
\end{eqnarray*}  
Observe  that we have as well, for   $ x \in \partial \Omega \cap B_{1}(0)$ that
\begin{eqnarray*}  
\omega(x) = \frac{\norm{f}{\lspace{\infty}{}} }{2n\lambda} \abs{x}^{2} - \phi (x) \leq \left( \frac{\norm{f}{\lspace{\infty}{}} }{2n\lambda}  + C \right)\abs{x}^{\beta} .
\end{eqnarray*}  
Therefore, we can apply the comparison principle for $\omega$ and a barrier function as in Lemma \ref{lemma: reference decay }.  
Repeating the same construction as in the proof of  Lemma \ref{lemma: reference decay } and Lemma \ref{lemma: iteration decay } we obtain also that
\begin{eqnarray*}  
\omega (x) \leq \overline{C} \abs{\rho}^{\alpha}
\end{eqnarray*}  
and so, in an analogous way, we obtain that 
\begin{eqnarray*}  
-v(x) \leq \tilde{C} \abs{\rho}^{\alpha}.
\end{eqnarray*}  
with $\tilde{C}$ depending of ${\norm{f}{\lspace{\infty}{}} }.$ This completes the proof.
\end{proof}

With this result that is the  analogous of Proposition 4.12 in  \cite{caffarelli_fully_1995}, the proof of Proposition \ref{prop: holder up boundary} follows as the proof of Propositon 4.13 in  \cite{caffarelli_fully_1995}.

\subsection{Proof of Theorem \ref{theorem: existence}}

In this section we finally present the proof of Theorem \ref{theorem: existence} stated in Section \ref{sec: main results}.

\begin{proof}

Let $B$ be the Banach space of bounded continuous vector-valued functions defined on a domain $\Omega$ with the norm
\begin{eqnarray*}
\norm{ (\ueind{1}, \ueind{2}, \cdots, \ueind{d}) }{B}=\max_{i}\left( \sup_{x \in \Omega} \abs{\uind{i}(x)}\right)
\end{eqnarray*}
Let  $\sigma$ be the subset of bounded continuous functions that satisfy prescribed boundary data, and are bounded from above and from below as is stated below:
\begin{eqnarray*}
\sigma=\{ (\ueind{1}, \ueind{2}, \cdots, \ueind{d}):  \ueind{i} \: \mbox{is continuous},\:  \ueind{i}(x)= \phi_{i} (x) \: \mbox{when} \: x \in \partial \Omega,  \quad 0 \leq \ueind{i} (x) \leq \sup_{i} \norm{\phi_{i}}{\lspace{\infty}{}} \}
\end{eqnarray*}
$\sigma $ is a closed and convex subset of $B$. 
Let $T$ be the operator that is defined in the following way:
$T \left((\ueind{1}, \ueind{2}, \cdots, \ueind{d})\right)= (\veind{1}, \veind{2}, \cdots, \veind{d})$ 
if 
$(\ueind{1}, \ueind{2}, \cdots, \ueind{d})$ and $(\veind{1}, \veind{2}, \cdots, \veind{d})$  are such that,
\begin{eqnarray*}
\left\{
\begin{split}
&  \puccin (\veind{i})=\frac{1}{\epsilon} \sum_{j\neq i} \veind{i} \ueind{j} \quad i=1, \ldots, d
\qquad  \mbox{in} \quad \Omega   \\
 & \veind{i} = \phi_{i},  \quad i=1, \ldots, d, \qquad  \mbox{in} \quad \partial \Omega,\\
  \end{split}
 \right.
\end{eqnarray*}
in the viscosity sense, where $\ueind{j},$ $j\neq i$ are fixed.
Let  $g(\uue)=\frac{1}{\epsilon} \sum_{j\neq i}  \ueind{j}$ and observe that each of the previous equations of the vector $\uue=(\ueind{1}, \ueind{2}, \cdots, \ueind{d}) $ take the form:
$$ \puccin (\veind{i})= \veind{i} \: g(\uue). $$
Observe that if $T $ has a fixed point, then 
$$
T\left((\ueind{1}, \ueind{2}, \cdots, \ueind{d})\right)= (\ueind{1}, \ueind{2}, \cdots, \ueind{d})
$$
meaning that   $\ueind{i} = \phi_{i}$  on the boundary and that
\begin{eqnarray*}
\puccin (\ueind{i})=\frac{1}{\epsilon} \sum_{j\neq i} \ueind{i} \ueind{j} \qquad i=1, \ldots, d
\end{eqnarray*}
 in $\Omega$, which proves the existence as desired.

So in order for $T$ to have a fixed point we need to prove that it satisfies the hypotheses of Proposition \ref{prop: fixed point}:

\begin{enumerate}
\item ${\it{ T(\sigma) \subset \sigma:}} $ We need to prove that there exists a regular solution for each one of the equations  
\begin{eqnarray*}
\left\{
\begin{split}
& \puccin (\veind{k})- \veind{k} \: g(\uue)=0, \quad  k=1, \ldots, d, \quad  \mbox{in} \quad \Omega   \\
&  \veind{k} = \phi_{k}, \qquad k=1, \ldots, d, \quad  \mbox{in} \quad \partial \Omega.\\
  \end{split}
 \right.
\end{eqnarray*}

Observe that if such a regular solution exists, the comparison principle is valid by Proposition \ref{prop: general comparison principle}. 

To use Proposition \ref{prop: general existence} we can rewrite the differential equation in the form
\begin{eqnarray*}
F(\veind{k})= \inf_{
\begin{split}
&\: a_{ij} \in \qq  \\ 
& \: \lbrack a_{ij} \rbrack
 \in \mathcal{A}_{\lambda, \Lambda}\\
\end{split}} 
\left( a_{ij}\Dij \veind{k}- \veind{k} \: g(\uue)\right)=0, \qquad  k=1, \ldots, d.
\end{eqnarray*}
since, by density, taking the infimum in $\qq $ is equal to taking the infimum over $\rr.$ Let $\left(\Omega_{l}\right)_{l \in \mathbb{N}}$ be a family of smooth domains contained in $\Omega$ such that $\Omega_{l}\nearrow \Omega$ as $l \rightarrow \infty.$ Since  we do not have the desired regularity on the coefficients of $\veind{k}$ we will first consider the boundary value problem in each smooth domain $\Omega_{l}$ with the following regularized equation,
\begin{eqnarray*}
F^{\delta}(\veind{k})= \inf_{
\begin{split}
& \: a_{ij} \in \qq  \\ 
& \: \lbrack a_{ij} \rbrack
 \in \mathcal{A}_{\lambda, \Lambda}\\
\end{split}} \left( a_{ij}\Dij \veind{k}- \veind{k} \: \left(g(\uue) \star \rho_{\delta} \right)\right)=0, \qquad  k=1, \ldots, d,
\end{eqnarray*}
where $\rho_{\delta} $ is an approximation of the identity. And we  will consider suitable  boundary data  that converges to the original boundary data when we approach the original domain.

So now, by Proposition \ref{prop: general existence}, there exist a unique solution 
$(\veind{k})^{\delta}_{l} \in \cspace{2}{}{(\Omega)}\cap \cspace{0}{}{(\overline{\Omega)}}$ 
 with 
\begin{eqnarray*}
(\veind{k})^{\delta}_{l}=(\phi_{k})_{l} \quad \mbox{on} \; \partial \Omega_{l}, \quad k=1, \cdots,d.
\end{eqnarray*}

Taking the limit in $l$ we obtain the existence of $(\veind{k})^{\delta},$ solutions in $\Omega$ with boundary data equals to $\phi_{k}.$ 
From Proposition \ref{prop: holder up boundary} we also have that there exists a universal exponent $\gamma$ such that $(\veind{k})^{\delta}$ is \holder continuous up to the boundary, $(\veind{k})^{\delta} \in \cspace{\gamma}{}{(\overline{\Omega})} $.

As we have $F^{\delta} \rightarrow F$ uniformly due to the properties of identity approximation, we can conclude that 
\begin{eqnarray*}
F^{\delta}(\veind{k}^{\delta}) \stackrel{\delta \rightarrow 0}{\longrightarrow} F(\veind{k})
\end{eqnarray*}
by Proposition  \ref{prop: compactness }, this is, that  $\veind{k} \in \cspace{\gamma}{}{(\overline{\Omega})}$ is the solution for our problem $ F(\veind{k})=0 $ in $\Omega,$ for all $k$.

We need also to prove that for each $k,$ $0\leq \veind{k}\leq \sup_{i} \norm{\phi_{i}}{\lspace{\infty}{}}.$

Suppose, by contradiction that  there exists $x_{0}$ such that $ \veind{k} (x_{0})< 0.$ Since $ \veind{k} = \phi_{k}$   on $\partial \Omega$ and $\phi_{k}$ is non-negative, $x_{0}$ must be an interior point. But, if $ \veind{k} $ has an interior minimum attained at $x_{0}$ then there exists a paraboloid $P$ touching $\veind{k}$ from above at $x_{0}$ and such that
\begin{eqnarray*}
\puccin( P(x_{0})) > 0. 
\end{eqnarray*}
Since, $g(\uue)\geq 0,$ on the other hand we have that
\begin{eqnarray*}
 \puccin (\veind{k}(x_{0}))= \veind{k}(x_{0}) \: g(\uue(x_{0})) \leq 0,
\end{eqnarray*}
and so we have a contradiction. 

Analogously, suppose, by contradiction that  there exists $x_{0}$ such that $ \veind{k} (x_{0})> \sup_{i} \norm{\phi_{i}}{\lspace{\infty}{}}.$ Then, by the same reason as before $x_{0}$ must be an interior point and 
\begin{eqnarray*}
\puccin( \veind{k} (x_{0})) < 0. 
\end{eqnarray*}
Since, $g(\uue)\geq 0,$ we have that
\begin{eqnarray*}
 \puccin (\veind{k}(x_{0}))= \veind{k}(x_{0}) \: g(\uue(x_{0})) \geq 0,
\end{eqnarray*}
and so we have a contradiction. 

\item {\it $T$ is continuous:} Let us assume that for each fixed $\epsilon$ we have  $\left((\ueind{1})_{n}, \cdots, (\ueind{d})_{n}\right) \rightarrow \left(\ueind{1}, \cdots, \ueind{d}\right)$ in $[\cspace{}{}{(\Omega)}]^{d}$ meaning that  when $n $ tends to $\infty$,
\begin{eqnarray*}
\max_{ 1<i<d} \norm{(\ueind{i})_{n} - \ueind{i} }{L^{\infty}{}} \rightarrow 0.
\end{eqnarray*}
We need to prove that 
\begin{eqnarray*}
\norm{T\left((\ueind{1})_{n}, \cdots, (\ueind{d})_{n}\right)- T\left(\ueind{1}, \cdots, \ueind{d}\right)}{[C(\Omega)]^{d}} \rightarrow 0
\end{eqnarray*}
when $n\rightarrow \infty$. Since,
\begin{eqnarray*}
&& T\left((\ueind{1})_{n}, \cdots, (\ueind{d})_{n}\right)=((\veind{1})_{n}, \cdots, (\veind{d})_{n})
\end{eqnarray*}
if we prove that there exists a constant $C,$ independent of $i,$  so that we have the estimate 
$$
\norm{(\veind{i})_{n}- \veind{i}}{L^{\infty}} \leq C  \max_{j} \norm{(\ueind{j})_{n}- \ueind{j}}{L^{\infty}{}}
 $$
the result follows. For all $x \in  \Omega$, let $\omega_{n}$ be the function
$$
\omega_{n}(x)= (\veind{i})_{n} (x)- \veind{i}(x),
$$ 
and suppose by contradiction that there exists $y \in \Omega$ such that 
\begin{eqnarray}
\label{contradition h}
\omega_{n}(y) > r^{2} K  \max_{j} \norm{(\ueind{j})_{n}- \ueind{j}}{L^{\infty}{}},
\end{eqnarray}
for some large $K >0,$ where $r $ is such that $\Omega \subset B_{r}(0)$. 
We want to prove that this is impossible if $K$ is sufficiently large. Let $h_{n}$ be the concave radially symmetric function,
\begin{eqnarray*}
h_{n}(x)=\gamma (r^{2}- \abs{x}^{2}),
\end{eqnarray*}
with $\gamma=K \max_{i} \norm{(\ueind{i})_{n}- \ueind{i}}{L^{\infty}{}} $. Observe that:
\begin{enumerate}
\item $h_{n}(x)=0$  on $\partial B_{r}(0);$ 
\item $h_{n}(x)\leq r^{2}\,K\, \max_{j} \norm{(\ueind{j})_{n}- \ueind{j}}{L^{\infty}{}} $ for all $x$ in $B_{r}(0)$;
\item  $0= \omega_{n}(x) \leq h_{n}(x)$  on $\partial \Omega,$ since $(\veind{i})_{n}$ and $\veind{i}$ are solutions with the same boundary data.
\end{enumerate}
So,  since we are assuming (\ref{contradition h}), there exists a negative minimum of $ h_{n}-\omega_{n}$. Let $x_{0}$ be the point where the negative  minimum value of $ h_{n}-\omega_{n}$ is attained, $h_{n}(x_{0})-\omega_{n}(x_{0}) \leq 0.$
Then  we have that for any matrix $A \in \mathcal{A}_{\lambda, \Lambda},$
\begin{eqnarray*}
a_{ij} \Dij \left( \left(h_{n}-\omega_{n}\right)(x_{0})\right)\geq 0 \quad \mbox{and} \quad a_{ij}\Dij h_{n}(x)=\sum_{i} - 2\, \gamma \, a_{ii} \leq 0.
\end{eqnarray*}
Moreover,
\begin{eqnarray*}
\puccip(\omega_{n}) &\geq& \puccin((\veind{i})_{n}(x)) -\puccin(\veind{i})\\
& =& \frac{1}{\epsilon}\left( \left( (  \veind{i})_{n} - \veind{i} \right)  \sum_{j \neq i} (\ueind{j})_{n}  -  \veind{i} \sum_{j \neq i} \left(   \ueind{j}-(\ueind{j})_{n} \right) \right)\\
&\geq&    \frac{1}{\epsilon}\left( \left( (  \veind{i})_{n} - \veind{i} \right)  \sum_{j \neq i} (\ueind{j})_{n}  -  \veind{i} (d-1) \norm{(\ueind{j})_{n}- \ueind{j}}{L^{\infty}{}}\right)
\end{eqnarray*}
adding and substrating $ \frac{1}{\epsilon}\veind{i} \sum_{j \neq i} (\ueind{j})_{n} .  $ 
Then,  if $a_{ij}^{\omega}$ are the coefficients associated to $\omega_{n}, $ i.e.   
\begin{eqnarray*}
a_{ij}^{\omega_{n}}\Dij \omega_{n} (x) = \puccip(\omega_{n}),
\end{eqnarray*}
then
\begin{eqnarray*}
\qquad 0 &\leq &a_{ij}^{\omega_{n}}\Dij \left( h_{n}-\omega_{n}\right)(x_{0})\\  &\leq &
\sum_{i} - 2\, \gamma \, a^{\omega_{n}}_{ii}(x_{0}) - \frac{1}{\epsilon}\left( \left( (  \veind{i})_{n} - \veind{i} \right)(x_{0})  \sum_{j \neq i} (\ueind{j})_{n}(x_{0})   -  \veind{i} (x_{0}) (d-1) \norm{(\ueind{j})_{n}- \ueind{j}}{L^{\infty}{}}\right) \\
&\leq& \sum_{i} - 2\, K  \max_{j} \norm{(\ueind{j})_{n}- \ueind{j}}{L^{\infty}{}}\, a^{\omega_{n}}_{ii}(x_{0}) + \frac{1}{\epsilon}  \veind{i}(x_{0})(d-1) \norm{(\ueind{j})_{n}- \ueind{j}}{L^{\infty}{}},
\end{eqnarray*}
because $0 < h_{n}(x_{0})\leq \omega_{n}(x_{0})$ and $\sum_{j \neq i} (\ueind{j})_{n}(x_{0})\geq 0$ and so
\begin{eqnarray*}
- \frac{1}{\epsilon}\left( (  \veind{i})_{n} - \veind{i} \right)(x_{0})  \sum_{j \neq i} (\ueind{j})_{n}(x_{0})  \leq 0.
\end{eqnarray*}
Taking $K > \frac{d-1}{\lambda \epsilon} \sup_{i} \norm{ \phi_{i}}{L^{\infty}}, $ we obtain that
\begin{eqnarray*}
0 &\leq &a_{ij}^{\omega_{n}}\Dij \left( h_{n}-\omega_{n}\right)(x_{0}) < 0
\end{eqnarray*} 
 which is a contradition.

\item {\it $T(\sigma) $ is precompact:} This a consequence of (1), since the solutions to the equation are \holder  continuous, and this set is precompact in $\sigma$.

\end{enumerate}
This concludes the proof.
\end{proof}

\section{Regularity of solutions (Theorem \ref{theorem: regularity of solutions})}
\label{sec: holder}

Following the proof presented in \cite{caffarelli_geometry_2009},
we will prove the decay of the oscillation of the solution under certain particular hypotheses and then use this result to prove the uniform $C^{\alpha}$ regularity for a solution of the system of equations.

\subsection{Some Lemmas}

We first establish some conditions under which the maximum of a function in a smaller ball is lower.

\begin{lemma}
\label{lemma: cases of decay}
Let $\uue$ be a solution of Problem (\ref{eq: main problem}). Let $M_{i}= \max_{x \in B_{1}(0)} \ueind{i} (x)$  and $O_{i} = \mathrm{osc}_{x \in B_{1}(0)} \ueind{i}(x)$.  
If for some positive constant $\gamma_{0}$ one of the following hypotheses is verified
\begin{enumerate}
\item  $\abs{\{x \in B_{\frac{1}{4} } (0):  \ueind{i}(x) \leq M_{i} - \gamma_{0} O_{i} \}} \geq \gamma_{0} $
\item $\abs{\{x \in B_{\frac{1}{4} } (0): \puccin (\ueind{i}(x) )\geq \gamma_{0} \: O_{i}\}} \geq \gamma_{0} $
\item $\abs{\{x \in B_{\frac{1}{4} } (0): \puccin ( \ueind{i}(x) ) \geq \gamma_{0} \: \ueind{i}(x) \}}  \geq \gamma_{0} $
\end{enumerate}
then there exist a small positive constant $c_{0}= c_{0} (\gamma_{0})$ such that  the following decay estimate is valid:
$$
\max_{x \in B_{\frac{1}{4}}(0)} \ueind{i} (x) \leq M_{i} - c_{0} \: O_{i}
$$
and so 
$$
\osc_{B_{\frac{1}{4}}(0)} \ueind{i} (x) \leq \tilde{c}_{0} \: O_{i}, \qquad \mbox {with} \: \quad \tilde{c}_{0}<1.
$$
\end{lemma}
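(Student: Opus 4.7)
The strategy is to prove each of the three alternative hypotheses separately via the $L^\epsilon$ Lemma (Lemma \ref{le lemma}) or a suitable barrier comparison, all applied to the normalized auxiliary function
\[
\omega(x) \;=\; \frac{M_i - \ueind{i}(x)}{O_i}, \qquad x \in B_1(0),
\]
which is nonnegative in $B_1(0)$. A short computation using the positive homogeneity of $\puccin$, the identity $\puccin(-v) = -\puccip(v)$, and translation invariance gives $\puccin(\omega) = -\puccip(\ueind{i})/O_i \leq -\puccin(\ueind{i})/O_i = -f_i/O_i \leq 0$, where $f_i := \frac{1}{\epsilon}\ueind{i}\sum_{j\neq i}\ueind{j} \geq 0$. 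In particular $\omega \in \overline{S}(\lambda,\Lambda,0)$, and the conclusion of the lemma is equivalent to $\inf_{B_{1/4}(0)}\omega \geq c_0$.

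For Case (1), the result is immediate from the $L^\epsilon$ Lemma. Arguing by contradiction, suppose $\inf_{B_{1/4}(0)}\omega < c_0$; then the rescaled function $\omega/c_0$ belongs to $\overline{S}(\lambda,\Lambda,0)$ and has infimum less than $1$ on $B_{1/2}(0)$. Lemma \ref{le lemma} (with $f \equiv 0$) yields $|\{x \in B_{1/4}(0): \omega(x) \geq \gamma_0\}| \leq d(c_0/\gamma_0)^{\epsilon}$, which contradicts hypothesis (1) as soon as $c_0 = c_0(\gamma_0)$ is chosen small enough.

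For Case (2) I compare $M_i - \ueind{i}$ with an explicit barrier adapted to the set $E := \{x \in B_{1/4}(0): f_i(x) \geq \gamma_0 O_i\}$. Let $\psi$ solve $\puccin(\psi) = -\chi_E$ in $B_{1/2}(0)$ with $\psi = 0$ on $\partial B_{1/2}(0)$. A Fabes-Stroock-type estimate (Appendix \ref{app: F S inequality}), applied to a linear operator attaining the Pucci extremum, yields the quantitative lower bound $\psi(x) \geq c|E|^{p} \geq c\gamma_0^{p}$ for $x \in B_{1/4}(0)$. By positive homogeneity, $\gamma_0 O_i \psi$ solves $\puccin(\cdot) = -\gamma_0 O_i \chi_E$, while $M_i - \ueind{i}$ satisfies $\puccin(M_i - \ueind{i}) \leq -f_i \leq -\gamma_0 O_i \chi_E$ in $B_{1/2}(0)$ and is nonnegative on $\partial B_{1/2}(0)$. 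The comparison principle then delivers $M_i - \ueind{i} \geq c\gamma_0^{1+p} O_i$ in $B_{1/4}(0)$, which is the required decay.

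Case (3) reduces to the previous two by splitting $E_3 := \{x \in B_{1/4}(0): f_i(x) \geq \gamma_0 \ueind{i}(x)\}$ at the level $M_i/2$: since $\ueind{i} \geq 0$ one has $O_i \leq M_i$, so on $E_3 \cap \{\ueind{i} \geq M_i/2\}$ the stronger inequality $f_i \geq \gamma_0 M_i/2 \geq \gamma_0 O_i/2$ holds (reducing to Case (2) with $\gamma_0$ replaced by $\gamma_0/2$), while on $E_3 \cap \{\ueind{i} \leq M_i/2\}$ one has $\ueind{i} \leq M_i - O_i/2$ (reducing to Case (1)). Since one of these two subsets must have measure at least $\gamma_0/2$, a universal $c_0(\gamma_0)$ is produced in either situation. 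The principal technical obstacle will be establishing the lower bound $\psi \geq c|E|^{p}$ needed in Case (2); once this Fabes-Stroock input is in hand, the remaining viscosity comparison steps are routine.
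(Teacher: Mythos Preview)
Your proposal is correct and follows essentially the same route as the paper. Case~(1) via the $L^\epsilon$ lemma applied to $(M_i-\ueind{i})/O_i$ and the level-splitting reduction in Case~(3) match the paper's argument exactly. In Case~(2) there is only a cosmetic difference: the paper works directly with the linear operator $L_a$ realizing $\puccin(\ueind{i})=f_i$ pointwise and its Green's function $G_a$ on $B_1$, writing $M_i-\ueind{i}(x)\geq \int_{E} G_a(x,y)f_i(y)\,dy$ and then invoking Fabes--Stroock on that integral, whereas you first build an auxiliary barrier $\psi$ with $\puccin(\psi)=-\chi_E$ and then reach $M_i-\ueind{i}\geq \gamma_0 O_i\,\psi$ by viscosity comparison. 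Both packagings rest on the same Fabes--Stroock lower bound, which you correctly flag as the only nontrivial ingredient; the paper's version avoids the extra comparison step (and the mild nuisance of a discontinuous right-hand side for $\psi$) at the cost of appealing to the $C^2$ regularity of $\ueind{i}$ to extract the linear operator.
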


\begin{proof} $\:$ 
\begin{enumerate}
\item By contradiction, assume that for all $c_{0}$ small, exist a point $x_{0} \in B_{\frac{1}{4} } (0)$ such that 
$$
\ueind{i} (x_{0}) \geq M_{i} - c_{0} \: O_{i}
$$
and let 
$$
v_{i}(x) = \frac{M_{i} -\ueind{i} (x)  }{O_{i}}.
$$
$v_{i}$ satisfy the following properties, with $f_{i}(x)= - \frac{ \ueind{i} (x) }{\epsilon c_{0} \: O_{i}} \sum_{j} \ueind{j} (x) $:
\begin{enumerate}
\item  $\inf_{B_{\frac{1}{2}}(0) } \frac{ \vind{i}(x)}{c_{0}  } \leq \inf_{B_{\frac{1}{4}}(0) } \frac{ \vind{i}(x)}{c_{0}  } \leq 1  $
\item $ \puccin (\frac{\vind{i}}{c_{0}}) \leq  f_{i}(x)$
\item $\vind{i}(x) \geq 0$ in $B_{1}(0)$
\item $\norm{f_{i}^{+}}{\lspace{n}{}{}}=0$  since $f_{i}^{+} (x)= 0.$
\end{enumerate}

Then we can apply the $L^{\epsilon}-$ Lemma stated as Lemma \ref{le lemma},  to conclude that 
$$\abs{\{x \in B_{\frac{1}{4}}(0):  \frac{ \vind{i}(x)}{c_{0}  } \geq    t   \}} \leq d t^{-\delta}, $$
for $d, \delta$ universal constants and for all $t >0.$
If $t = \frac{\gamma_{0}}{c_{0}}$  then we have
 $$\abs{\{x \in B_{\frac{1}{4}}(0):  \vind{i}(x)  \geq   \gamma_{0}      \}} \leq d (\frac{\gamma_{0}}{c_{0}})^{-\delta}$$
  But, taking in account the  hypothesis we have:
\begin{eqnarray*}
  \gamma_{0} \leq \abs{\{x \in B_{ \frac{1}{4} } (0):  \vind{i}(x)  \geq   \gamma_{0}      \}} \leq d (\frac{\gamma_{0}}{c_{0}})^{-\delta} 
\end{eqnarray*}
 Since $c_{0}$ is arbitrary, if $c_{0}^{\delta} < \frac{\gamma_{0}^{\delta +1}}{d}$ we have  the  contradiction.

\item 
\label{case b}
If $\ueind{i}$ is  a solution for $\puccin(\ueind{i})= f(\ueind{i})$ there exists a symmetric matrix with coefficients $a_{ij}(x)$ with 
$$
\lambda \abs{\xi}^{2}\leq a_{ij}\xi_{i}\xi_{j} \leq \Lambda \abs{\xi}^{2}
$$
such that 
\begin{eqnarray*}
a_{kl}(x) D_{kl}\ueind{i}(x) = f(\ueind{i}(x))
\end{eqnarray*}
For that particular matrix consider the linear problem with measurable coefficients in $B_{1}(0):$
$$
L_{a} \left( v(x)\right) = g(x) 
$$
and let $G(x, \cdot)$ be the respective Green function on  $B_{1}(0)$ such that for all $x \in B_{1}(0), $
\begin{eqnarray*}
v(x)= -\int_{B_{1}(0)} G(x,y) g(y) \dy + \mbox{boundary terms}
\end{eqnarray*}
(see page 415 in \cite{Escauriaza_apriori_1993}).
We conclude 
\begin{eqnarray*}
M_{i} - \ueind{i}(x) \geq -\int_{B_{1}(0)} G(x,y)(-f(\ueind{i}))\dy \geq \int_{A_{i}} (G(x,y))(f(\ueind{i}))\dy,
\end{eqnarray*}
for $A_{i}:=\{x \in B_{\frac{1}{4} } (0): \puccin (\ueind{i}(x) )\geq \gamma_{0} \: O_{i}\},$ since 
\begin{eqnarray*}
a_{kl}(x) D_{kl} (M_{i}-\ueind{i}(x)) = -f(\ueind{i}(x)),
\end{eqnarray*}
and the boundary values are positive and $ G(z, y) f(\ueind{i}(y)) \geq 0$ for all $y.$ Since by hypothesis 
$$\abs{A_{i}}:=\abs{\{x \in B_{\frac{1}{4} } (0): \puccin (\ueind{i}(x) )\geq \gamma_{0} \: O_{i}\}} \geq \gamma_{0} $$ and due to  Fabes-Strook inequality, (see Lemma \ref{lemma: F S ineq} and Theorem 2 in \cite{fabes_lp_1984} for more details)
\begin{eqnarray*}
  \int_{A_{i}}  G(z, y) \gamma_{0} \: O_{i} \dy \geq \gamma_{0} \: O_{i} \left( \frac{\abs{A_{i}}}{\abs{B_{\frac{1}{4}}(0)}}\right)^{\gamma}  \int_{B_{\frac{1}{4}}(0)}  G(z, y)  \dy. 
\end{eqnarray*}
We claim that:
\begin{equation}
\label{claim green}
\frac{1}{2n\Lambda} \int_{B_{\frac{1}{4}}(0)}  G(z, y) 2n\Lambda \dy \geq C.
\end{equation}
So again by hypothesis and due to the claim (that we will prove later) we have
\begin{eqnarray*}
\gamma_{0} \: O_{i} \left( \frac{\abs{A_{i}}}{\abs{B_{\frac{1}{4}}(0)}}\right)^{\gamma}  \int_{B_{\frac{1}{4}}(0)}  G(z, y)  \dy  \geq \gamma_{0} \: O_{i} \left( \frac{\gamma_{0}}{\abs{B_{\frac{1}{4}}(0)}}\right)^{\gamma}  C
\end{eqnarray*}
so 
$$
M_{i}- \ueind{i}(x) \geq \gamma_{0} \: O_{i} \left( \frac{\gamma_{0}}{\abs{B_{\frac{1}{4}}(0)}}\right)^{\gamma}  C= c_{0} O_{i}.
$$
with $c_{0}<1.$

To prove claim (\ref{claim green}) we argue that 
\begin{eqnarray*}
 \int_{B_{\frac{1}{4}}(0)}  G(z, y) 2n\Lambda \dy \geq \int_{B_{\frac{1}{4}}(0)}  G(z, y) 2 \left(\sum_{i} a_{ii}\right) \dy= \frac{1}{4^{2}}- \abs{z}^{2} \geq C
\end{eqnarray*}
for $z $  interior, since
$a_{ij}\Dij (\frac{1}{4^{2}}- \abs{z}^{2} ) = -2 \sum_{i} a_{ii}$ and $-2n\Lambda \leq -2 \sum_{i} a_{ii} \leq -2n \lambda$
\item  Let 
\begin{eqnarray*}
A_{i}=\left\{x \in B_{\frac{1}{4} } (0): \puccin ( \ueind{i}(x) ) \geq \gamma_{0} \: \ueind{i}(x) \right\}
 \end{eqnarray*}
and 
\begin{eqnarray*}
H_{i}=\left\{x \in A_{i}:  \ueind{i}(x)  \leq \frac{M_{i}}{2} \right\} \end{eqnarray*}
and consider the two possible cases: 
$(a) \: \abs{A_{i} \backslash H_{i}} \geq \frac{1}{2} \abs{A_{i}}$ and $(b) \: \abs{A_{i} \backslash H_{i}} < \frac{1}{2} \abs{A_{i}}$.

\begin{enumerate}
\item If $ \abs{A_{i} \backslash H_{i}} \geq \frac{1}{2} \abs{A_{i}}$ then as
\begin{eqnarray*}
 \left\{  x \in A_{i} \backslash H_{i}:   \puccin ( \ueind{i}(x) ) \geq \gamma_{0} \: \frac{M_{i}}{2} \right\} \subset  \left\{x \in  B_{\frac{1}{4} } (0): \puccin ( \ueind{i}(x) ) \geq \frac{\gamma_{0} \: O_{i}}{2} \right\} 
\end{eqnarray*}
since $\frac{O_{i}}{2} \leq \frac{M_{i}}{2},$  we can conclude that
\begin{eqnarray*}
\abs{\left\{x \in B_{\frac{1}{4} } (0): \puccin ( \ueind{i}(x) ) \geq \gamma_{0} \:\frac{ O_{i}}{2} \right\} }  \geq  \abs{A_{i}\backslash H_{i}} \geq \frac{1}{2}\abs{A_{i}} \geq \frac{\gamma_{0}}{2}.
\end{eqnarray*}
 Then we have the decay by (\ref{case b}) with $\gamma_{0}$ replaced by $\frac{\gamma_{0}}{2}.$
\item If $ \abs{A_{i} \backslash H_{i}} < \frac{1}{2} \abs{A_{i}}$ then as
\begin{eqnarray*}
H_{i}=\left\{x \in A_{i}:  \ueind{i}(x)  \leq \frac{M_{i}}{2} \right\} \subset \left\{x \in B_{\frac{1}{4} } (0):  \ueind{i}(x)  \leq M_{i} - \beta_{0} O_{i}\right\} 
\end{eqnarray*}
for $\beta_{0} \leq \frac{M_{i}}{2 O_{i}}$ and  
\begin{eqnarray*}
\abs{H_{i}} = \abs{A_{i}\backslash \left(A_{i} \backslash H_{i}\right)} = \abs{A_{i}}- \abs{\left(A_{i} \backslash H_{i}\right)} \geq \frac{\abs{A_{i}}}{2} \geq \frac{\gamma_{0}}{2} 
\end{eqnarray*}
we have
\begin{eqnarray*}
\abs{ \left\{x \in B_{\frac{1}{4} } (0):  \ueind{i}(x)  \leq M_{i} - \tilde{ \gamma_{0}} O_{i}\right\} } \geq \frac{ \tilde{ \gamma_{0}}}{2}
\end{eqnarray*}
for $\tilde{\gamma_{0}}=\min(\beta_{0}, \gamma_{0})$.  The decay follows by (1).
\end{enumerate}
\end{enumerate}
\end{proof}

Next Lemma states that if all the oscillations are tiny compared to just one that remains big, then the largest oscillation has to decay due to an increase of the minimum in a smaller ball.

\begin{lemma}
\label{lemma: oscillations argument}
Let $\uue$ be a solution of Problem (\ref{eq: main problem}) in $B_{1}(0)$. Let 
$$O^{1}_{i} = \osc_{x \in B_{1}(0)}  \ueind{i}(x). $$
 Assume that for some $\delta >0,$ sufficiently small, 
\begin{eqnarray*}
\sum_{j\neq 1} O^{1}_{j} \leq \delta O^{1}_{1}.
\end{eqnarray*}
then  $O^{1}_{1}$ must decay in $B_{\frac{1}{2}}(0)$, that is, there exist $\mu <1$ such that
\begin{eqnarray*}
O^{\frac{1}{2}}_{1} \leq \mu O^{1}_{1}.
\end{eqnarray*}
\end{lemma}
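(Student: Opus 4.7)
The plan is to argue by contradiction, reducing the statement to an application of Lemma~\ref{lemma: cases of decay} to $\ueind{1}$. Since $\osc_{B_{1}(0)}\ueind{j}\leq \delta O^{1}_{1}$ for every $j\neq 1$, each such $\ueind{j}$ is nearly constant on $B_{1}(0)$: setting $c_{j}:=\inf_{B_{1}(0)}\ueind{j}$ and $C:=\sum_{j\neq 1}c_{j}$, one has $c_{j}\leq \ueind{j}(x)\leq c_{j}+\delta O^{1}_{1}$ and hence $C\leq \sum_{j\neq 1}\ueind{j}(x)\leq C+(d-1)\delta O^{1}_{1}$ for all $x\in B_{1}(0)$. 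The natural dichotomy is then driven by the size of $C$ relative to $\epsilon$.

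\emph{Case I: $C\geq \gamma_{0}\epsilon$ for a universal $\gamma_{0}=\gamma_{0}(\delta)$ to be fixed.} Pointwise on $B_{1}(0)$ the equation gives
$$
\puccin(\ueind{1})(x)=\frac{\ueind{1}(x)}{\epsilon}\sum_{j\neq 1}\ueind{j}(x)\;\geq\;\frac{C}{\epsilon}\,\ueind{1}(x)\;\geq\;\gamma_{0}\,\ueind{1}(x).
$$
Thus the set in hypothesis~(3) of Lemma~\ref{lemma: cases of decay} is all of $B_{1/4}(0)$. After shrinking $\gamma_{0}$ so that $\gamma_{0}\leq |B_{1/4}(0)|$, Lemma~\ref{lemma: cases of decay}(3) produces a universal $\tilde c_{0}<1$ with $\osc_{B_{1/4}(0)}\ueind{1}\leq \tilde c_{0} O^{1}_{1}$, from which the claimed decay at scale $1/2$ will follow, possibly after a short covering/iteration argument to lift the conclusion from $B_{1/4}(0)$ to $B_{1/2}(0)$.

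\emph{Case II: $C<\gamma_{0}\epsilon$.} Then $\ueind{j}\leq \gamma_{0}\epsilon+\delta O^{1}_{1}$ everywhere on $B_{1}(0)$ and the forcing of the Pucci equation for $\ueind{1}$ satisfies
$$
f_{1}^{\epsilon}:=\frac{\ueind{1}}{\epsilon}\sum_{j\neq 1}\ueind{j}\;\leq\; N\gamma_{0}+\frac{N(d-1)\delta}{\epsilon}\,O^{1}_{1}.
$$
When $\delta/\epsilon$ is universally bounded, applying Proposition~\ref{prop: osc decay} to $\ueind{1}\in S^{\star}(\lambda,\Lambda,f_{1}^{\epsilon})$ yields $O^{1/2}_{1}\leq \mu\, O^{1}_{1}$ with universal $\mu<1$ after choosing $\gamma_{0}$ small. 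When $\delta/\epsilon$ is not bounded, the plan is to rescale at the natural screening scale $\sqrt{\epsilon}$, which turns the equation into one with coupling constant $1$, reducing the problem to a version of Case~I on a smaller ball, then lift back to $B_{1/2}(0)$ via a covering argument.

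\textbf{Main obstacle.} The delicate point is Case~II in the strongly coupled regime $\epsilon\ll \delta$: there the effective potential $\epsilon^{-1}\sum_{j\neq 1}\ueind{j}$ need not be uniformly bounded, and the $L^{n}$ error in the oscillation-decay estimate picks up a factor of $\delta/\epsilon$ which is not small compared to $O^{1}_{1}$. Extracting universal constants uniform in $\epsilon$ here is the main difficulty; I expect to resolve it by a careful scale-adapted choice of $\gamma_{0}(\delta)$ together with the compactness statement of Proposition~\ref{prop: compactness }, so that only a bounded number of scales between $1$ and $\sqrt{\epsilon}$ must be analyzed before Case~I becomes operative.
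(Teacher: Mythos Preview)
Your approach is genuinely different from the paper's, and the obstacle you flag in Case~II is real and, as far as I can see, fatal to this strategy. In the regime $\epsilon\ll\delta$ the forcing carries a factor $\delta/\epsilon$ in front of $O^1_1$, so Proposition~\ref{prop: osc decay} yields
\[
O^{1/2}_1 \;\leq\; \mu\, O^1_1 \;+\; C\,\frac{\delta}{\epsilon}\,O^1_1 \;+\; C\gamma_0,
\]
and choosing $\gamma_0$ small controls only the additive tail, not the multiplicative one. Rescaling to the screening length $\sqrt{\epsilon}$ trades this for another problem: after blow-up the domain has radius $1/\sqrt{\epsilon}$, and to return to $B_{1/2}$ in the original variables you must propagate oscillation decay across $\sim\log(1/\epsilon)$ scales, which is precisely the $\epsilon$-uniform estimate you are trying to prove. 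Proposition~\ref{prop: compactness } does not rescue this either, since invoking it requires equicontinuity of the family, i.e.\ the very H\"older bound that Lemma~\ref{lemma: oscillations argument} feeds into.

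The paper avoids any dichotomy on $\epsilon$ by introducing the $\puccin$-harmonic replacement $\omega$ of $\ueind{1}$ on $B_1(0)$ (same boundary values). One has $\ueind{1}\leq\omega$ since $\puccip(\ueind{1}-\omega)\geq 0$, and $\omega\leq \ueind{1}+\sum_{j\neq 1}(M_j-\ueind{j})$ since
\[
\puccin\Big(\ueind{1}+\sum_{j\neq 1}(M_j-\ueind{j})-\omega\Big)\;\leq\;\puccin(\ueind{1})-\sum_{j\neq 1}\puccin(\ueind{j})\;\leq\;0,
\]
the last inequality being the algebraic cancellation built into the system. This pins $|\omega-\ueind{1}|\leq\delta O^1_1$ pointwise with \emph{no} appearance of $\epsilon$; since $\omega\in S^{\star}(\lambda,\Lambda,0)$ its oscillation decays by a universal factor, and that decay transfers to $\ueind{1}$ with an $O(\delta)O^1_1$ error, absorbed by taking $\delta$ small. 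The idea you are missing is that the supersolution property $\puccin(\ueind{1}-\sum_{j\neq 1}\ueind{j})\leq 0$ lets you compare $\ueind{1}$ to its Pucci-harmonic replacement without ever estimating $f_1^{\epsilon}$.
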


\begin{figure}[h]
\begin{center}
\includegraphics[scale=0.4]{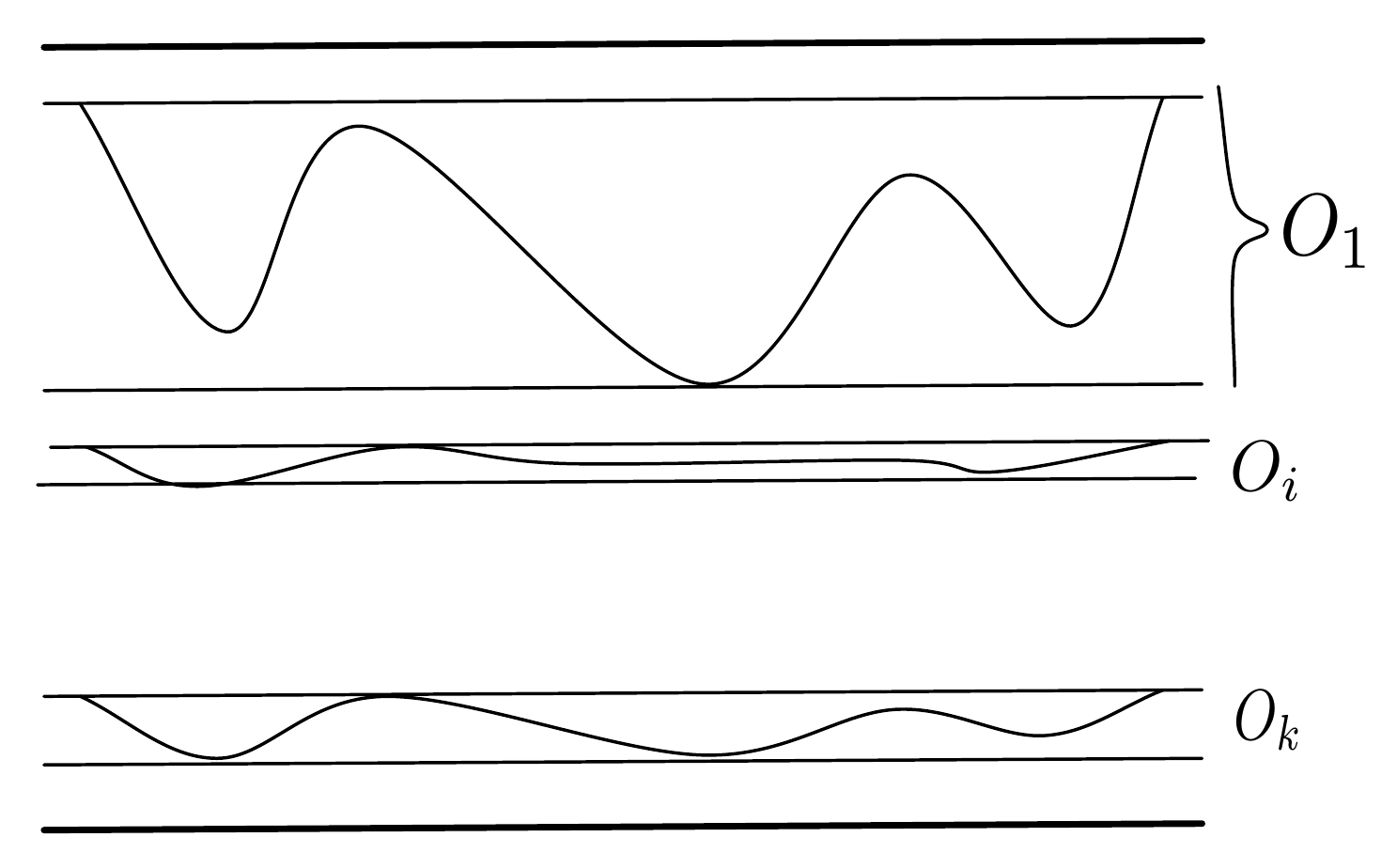}
\caption{Illustration of our hypotheses about oscillations. }
\end{center}
\end{figure}

\begin{proof}

Let $\omega$ be  the  solution of the problem
\begin{eqnarray*}
\left\{
\begin{split}
&\puccin (\omega(x)) =0, \quad x \in B_{1}(0) \\
&\omega (x)= \ueind{1} (x), \quad x \in \partial B_{1}(0) \\
\end{split}
\right.
\end{eqnarray*}
Since 
$\ueind{1}-\omega$
 is a subsolution for the positive Pucci extremal operator 
\begin{eqnarray*}
\puccip(\ueind{1}-\omega) \geq \puccin(\ueind{1}) + \puccip (-\omega) \geq 0,
\end{eqnarray*}
and 
$  \ueind{1} - \omega + \sum_{i\neq 1} (M_{i}- \ueind{i} ) $ 
is a supersolution for the negative Pucci extremal operator,
\begin{eqnarray*}
\puccin \left(    \ueind{1} + \sum_{i\neq 1} ( M_{i}- \ueind{i})  - \omega \right) &\leq& \puccin\left(   \ueind{1} + \sum_{i\neq 1} ( M_{i} - \ueind{i}) \right) +\puccip (-\omega) \\
&\leq & \puccin(   \ueind{1} )+ \sum_{i\neq 1}\puccip ( M_{i} - \ueind{i} )\\
&  \leq &\puccin (  \ueind{1} ) - \sum_{i\neq 1}\puccin  (\ueind{i})  \leq 0,
\end{eqnarray*}
by the maximum principle for viscosity solutions and our hypotheses we get,
\begin{eqnarray*}
\ueind{1} (x) \leq \omega (x) \leq \ueind{1}(x) +  \sum_{i \neq 1}(M_{i} - \ueind{i} (x)) \leq \ueind{1} + \delta O^{1}_{1}.
\end{eqnarray*}
Since $\omega \in S^{\star}(\lambda, \Lambda, 0),$ $\omega$ decays, namely
\begin{eqnarray*}
\osc_{x \in B_{\frac{1}{2}}(0)} \omega(x) \leq \mu  \: \osc_{x \in B_{1}(0)} \omega(x)
\end{eqnarray*}
 and  from this inequalities, since
 $$
 \max \omega- \min \omega \leq \max \ueind{1} + \delta O^{1}_{1} -\min \omega \leq  \max \ueind{1} + \delta O^{1}_{1} - \min \ueind{1},
 $$
 we can conclude that
$$
\osc_{B_{r}(0)} \omega \leq  \osc_{B_{r}(0)} \ueind{1}  +\delta O^{1}_{1}.
$$
In a analogous way, 
$$
\osc_{B_{r}(0)} \ueind{1} \leq \osc_{B_{r}(0)} \omega +\delta O^{1}_{1} .
$$
So, then,
\begin{eqnarray*}
\osc_{B_{\frac{1}{2}}(0)} \ueind{1} \leq \osc_{B_{\frac{1}{2}}(0)} \omega +\delta O^{1}_{1} \leq \mu  \: \osc_{x \in B_{1}(0)} \omega(x)+\delta O^{1}_{1} \leq  \mu  \left(   \osc_{B_{1}(0)} \ueind{1}  +\delta O^{1}_{1}\right)+\delta O^{1}_{1}.
\end{eqnarray*}
Simplifying, 
\begin{eqnarray*}
\osc_{B_{\frac{1}{2}}(0)} \ueind{1} \leq   \left(   \mu   \left(  1  +\delta\right) +\delta \right) O^{1}_{1},
\end{eqnarray*}
which concludes the proof by taking $\delta$ sufficiently small.
\end{proof}

\subsection{Proof of Theorem \ref{theorem: regularity of solutions}}

In this section we finally present the proof of Theorem  \ref{theorem: regularity of solutions} stated in section \ref{sec: main results}.

\begin{proof}

We prove  this theorem iteratively.
We  will prove that the oscillation of $\uue$ will  decay, by some constant factor $\tilde{\mu} <1,$ independent of $\epsilon$  when it goes from $B_{1}(0)$ to $B_{\lambda}(0)$  
for some $\lambda<1$ also independent of $\epsilon$. Meaning that, we will prove that  there exist two constants  $0<\lambda,\tilde{\mu} <1,$  independent of $\epsilon,$ such that for all $i=1,\ldots, d,$ we have 
\begin{eqnarray*}
\osc_{x \in B_{\lambda}(0)} \ueind{i}(x) \leq \tilde{\mu}  \: \osc_{x \in B_{1}(0)} \ueind{i}(x).
\end{eqnarray*}
The $C^{\alpha}$ regularity of each function will follow from here in  a standard way using Lemma 8.23, in \cite{gilbarg_elliptic_2001}. Since, what matters is the ratio between oscillations 
\begin{eqnarray*}
\frac{\osc_{B_{\lambda}(0)} \ueind{i}(x)}{\osc_{B_{1}(0)}\ueind{i}(x)} \leq \tilde{\mu},
\end{eqnarray*}
the result will hold true if we prove this decay for the normalized functions, which satisfy the same equation with a different value of $\epsilon.$ Thus, consider    $\ueind{1}, \: \ueind{2}, \cdots, \ueind{d} $ solutions  of  Problem (\ref{eq: main problem}) on $B_{1}(0)$ and the renormalized functions $\overline{\ueind{1}}, \cdots, \overline{\ueind{d}}, $ 
$$
\overline{\ueind{i}} (x) =  \rho\: \ueind{i } \left(x\right), \quad x \in B_{1}(0),  \qquad i=1, \cdots,d,
$$
with $\rho=\frac{1}{max_{x \in B_{1}(0), k=1, \ldots, d}\:  \ueind{k} (x)}. $
These functions are bounded from above by one and satisfy 
\begin{eqnarray*}
\puccin(\overline{\ueind{i}}(x)  ) = \underbrace{\frac{1 }{\rho \: \epsilon }}_{\overline{\epsilon}} \overline{\ueind{i} } (x) \sum_{i \neq l}\overline{ \ueind{l}}(x),  \qquad i=1, \cdots,d.
\end{eqnarray*}

Briefly, the iterative process consists of the following.  We  prove  that in $B_{\frac{1}{4}}(0)$ at least the largest oscillation decays. Without loss of generality consider $\overline{\ueind{1}}$ the function with the largest oscillation. Then, there exists $\overline{\mu}<1$ such that
$$
\osc_{x \in B_{\frac{1}{4}}(0)} \overline{\ueind{1}}(x) \leq \overline{\mu}  \: \osc_{x \in B_{1}(0)}\overline{ \ueind{1}}(x).
$$
Then, we  consider the renormalization  by the dilation in $x:$ 
$$
\overline{\overline{\ueind{i}}} (x) =\overline{\rho} \: \overline{\ueind{i }} \left(\frac{1}{4}x\right), \quad x \in B_{1}(0),  \qquad i=1, \cdots,d,
$$
with 
\begin{eqnarray*}
\overline{\rho} =\frac{1}{\max_{x \in B_{\frac{1}{4}}(0), k=1, \ldots, d}\:  \overline{\ueind{k}} (x)}>1.
\end{eqnarray*}
Observe that  these  functions  are solutions of  the system 
\begin{eqnarray*}
\puccin\left(\overline{\overline{\ueind{i}}} (x)  \right) = \underbrace{ \frac{1}{\overline{\rho}\, 4^{2} \: \overline{\epsilon} }}_{\overline{\overline{\epsilon}}} \overline{\overline{\ueind{i} } }(x) \sum_{i \neq l}\overline{ \overline{ \ueind{l}}} (x),  \qquad i=1, \cdots,d.
\end{eqnarray*}
\begin{figure}[h]
\begin{center}
\includegraphics[scale=0.4]{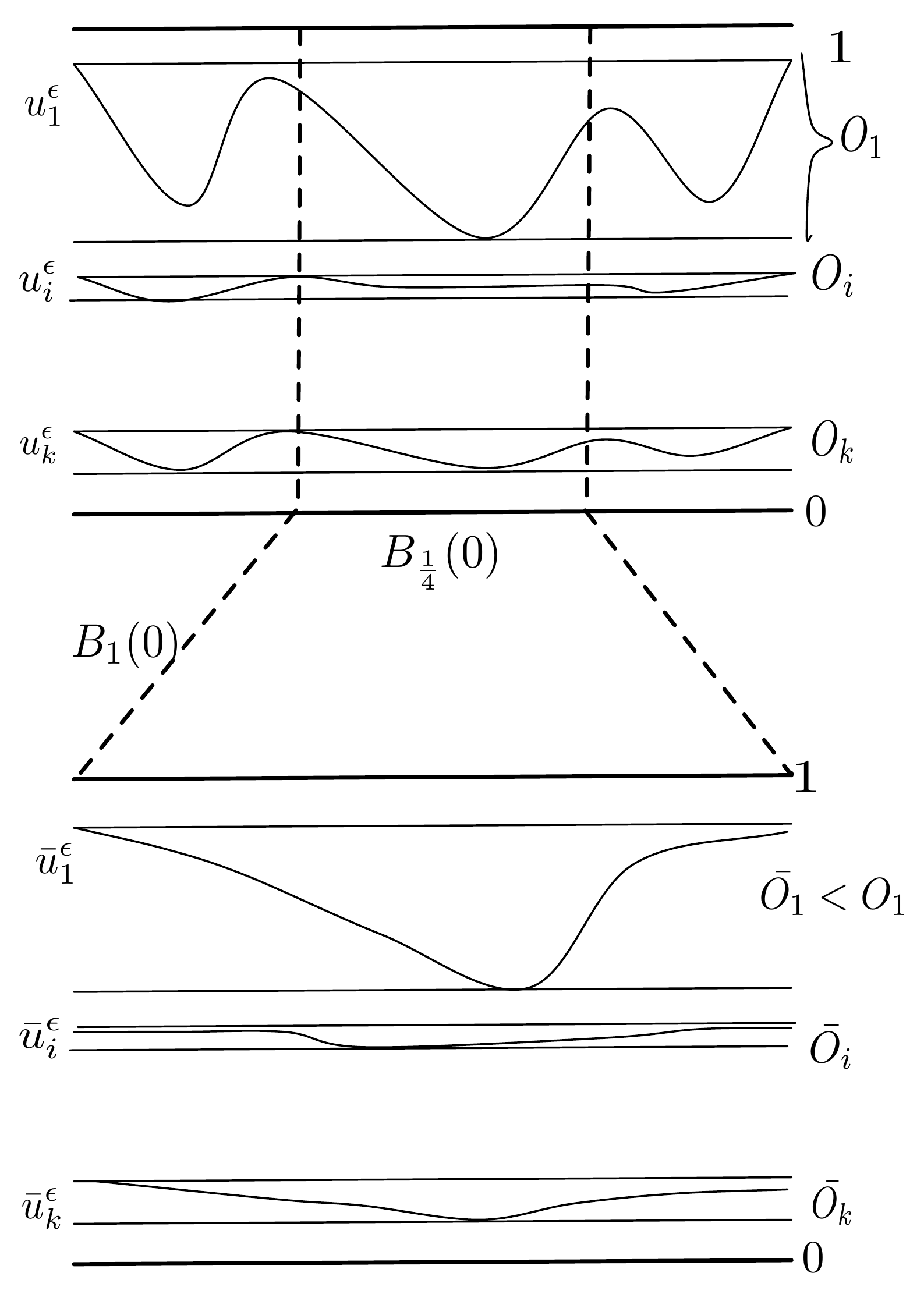}
\caption{Decay iteration. After the renormalization the oscillation of the first function decays while the others remain the same. In the original configuration we register that decay and we proceed with the next renormalization.}
\label{fig:Decay iteration}
\end{center}
\end{figure}

So, basically, they are the solutions of an equivalent system with  a different $\epsilon,$ still defined on $B_{1}(0).$

We start all over to prove that we have again the reduction of  the next largest oscillation,   when we are in $B_{\frac{1}{4}}(0).$ We call the new function with the largest oscillation $\overline{\overline{\ueind{i}}}$. So we have that there exists $ \overline{ \overline{\mu}} <1,$ independent of $\epsilon,$ such that 
\begin{eqnarray*}
 \osc_{x \in B_{\frac{1}{4}}(0)} \overline{\overline{ \ueind{i}}}(x)  \leq \overline{ \overline{\mu}}  \:   \osc_{x \in B_{1}(0)}\overline{ \overline{\ueind{i}}}(x) \Rightarrow \osc_{x \in B_{\frac{1}{4^{2}}}(0)}  \overline{\ueind{i }} (x ) \leq \overline{ \overline{\mu}}\: \osc_{x \in B_{\frac{1}{4}}(0)} \overline{ \ueind{i } }(x ) .
\end{eqnarray*}
Considering what has happened in the previous step we can have several scenarios. Either we have the reduction of the oscillation from $B_{1}(0)$ to $B_{\frac{1}{4^{2}}}(0)$ of just one of the functions, or two or more.
(Later, when there is no possible confusion, the function with the largest oscillation at each iteration will always be denoted by  $\overline{\overline{\ueind{1}}}$).

We repeat this process taking the renormalizations
$$
\overline{\overline{\ueind{i}}} (x) =\frac{1}{max_{z \in B_{1}(0), j}\: \overline{ \ueind{j}} \left(\frac{1}{4^{k}}z\right)}  \: \overline{\ueind{i }} \left(\frac{1}{4^{k}}x\right), \quad x \in B_{1}(0),  \qquad i=1, \cdots,d,
$$
until eventually we have the reduction of all the oscillations, or until the largest oscillation is much larger then the other oscillations. If that is the case, then eventually we will have that, for $O_{j}=\osc_{B_{1}} \overline{\overline{\ueind{j}}} (x),$
$$
\sum_{j\neq 1} O_{j} \leq \delta O_{1}.
$$
In that case, using the Lemma \ref {lemma: oscillations argument}, $O_{1}$ must also decay and we have the result. Thus, after repeating this iterative process a finite number of times we obtain that for some $\lambda <1,$ $\tilde{\mu}<1$ and every $i= 1, \ldots, d,$
$$
 \osc_{x \in B_{\lambda}(0)} \overline{ \ueind{i }} (x ) \leq \tilde{\mu} \: \osc_{x \in B_{1}(0)} \overline{ \ueind{i } }(x ).
$$
 
For simplicity, we will still refer to the renormalized functions  bounded by one, as  $\ueind{i}, $ and also $ \frac{1}{\overline{\rho}\, 4^{k} \:\overline{ \epsilon} } $
 will still be denoted by $\epsilon$ in each new step. Although,  $\epsilon $ can be bigger than the one in the first steps,  depending on the number of steps needed, it will eventually remain smaller than one  since the renormalization after each dilation  will be a multiplication by a factor smaller than one.

In what follows $\ueind{j_{0}}$ denotes at each renormalization, the function that achieves the maximum value $1$ and $\ueind{1}$ is  the function that has maximum oscillation. Naturally, they can be or not the same function. 

Below follows the proof of decay for the renormalized functions in all possible cases.

\underline{Case 1:} 
Let us assume that $\frac{1}{\epsilon} >1$.  \\
  We are going to use the following argument:  observe that,  if there exists $k$  such that:
$$
\abs{\{x \in B_{\frac{1}{4}}: \ueind{k} (x )\geq \gamma_{0}\}}\geq \gamma_{0},  
$$
 then since  for any $j \neq k$
$$
\puccin(\ueind{j} (x)  ) =  \frac{1}{ \epsilon} \ueind{j}  (x) (\ueind{k}+ \cdots)  
$$
we have,
$$
\abs{\left\{x \in B_{\frac{1}{4}}: \puccin (\ueind{j}) \geq \gamma_{0} \ueind{j}\right\}}\geq \gamma_{0}.
$$
And so by (3) in  Lemma  \ref{lemma: cases of decay} we can conclude that $O_{j},$ for all $j\neq k$ decays. 
Let $0<\gamma<\frac{1}{4}$ be a fixed constant.
\begin{enumerate}
\item   If  $\max_{x} \ueind{1} \geq \gamma$ and $O_{1}$ does not decay then by  (1) in Lemma \ref{lemma: cases of decay} we can conclude that there exists $\gamma_{0}$  a positive small constant such that
$$
\abs{\left\{x \in B_{\frac{1}{4}}:\ueind{1} (x) \geq \max_{x} \ueind{1}  - \gamma_{0}O_{1} \geq \frac{\gamma}{2} \right\}}\geq 1- \gamma_{0} \geq \frac{1}{2},
$$
so then as we observed before, we have for all $j\neq 1$  that,
$$
\abs{\left\{x \in B_{\frac{1}{4}}: \puccin (\ueind{j}) \geq  \frac{\gamma}{2} \ueind{j}\right\}}\geq \frac{\gamma}{2}.
$$
And so by (3) in  Lemma  \ref{lemma: cases of decay} we can conclude that $O_{j},$ for all $j\neq 1$ decays.
\item  If  $\max_{x} \ueind{1} < \gamma$ then, since $O_{1}\leq \max_{x} \ueind{1} < \gamma,$ all oscillations are smaller than $\gamma$ (see Figure \ref{fig:case2}).

\begin{figure}[h]
\begin{center}
\includegraphics[scale=0.4]{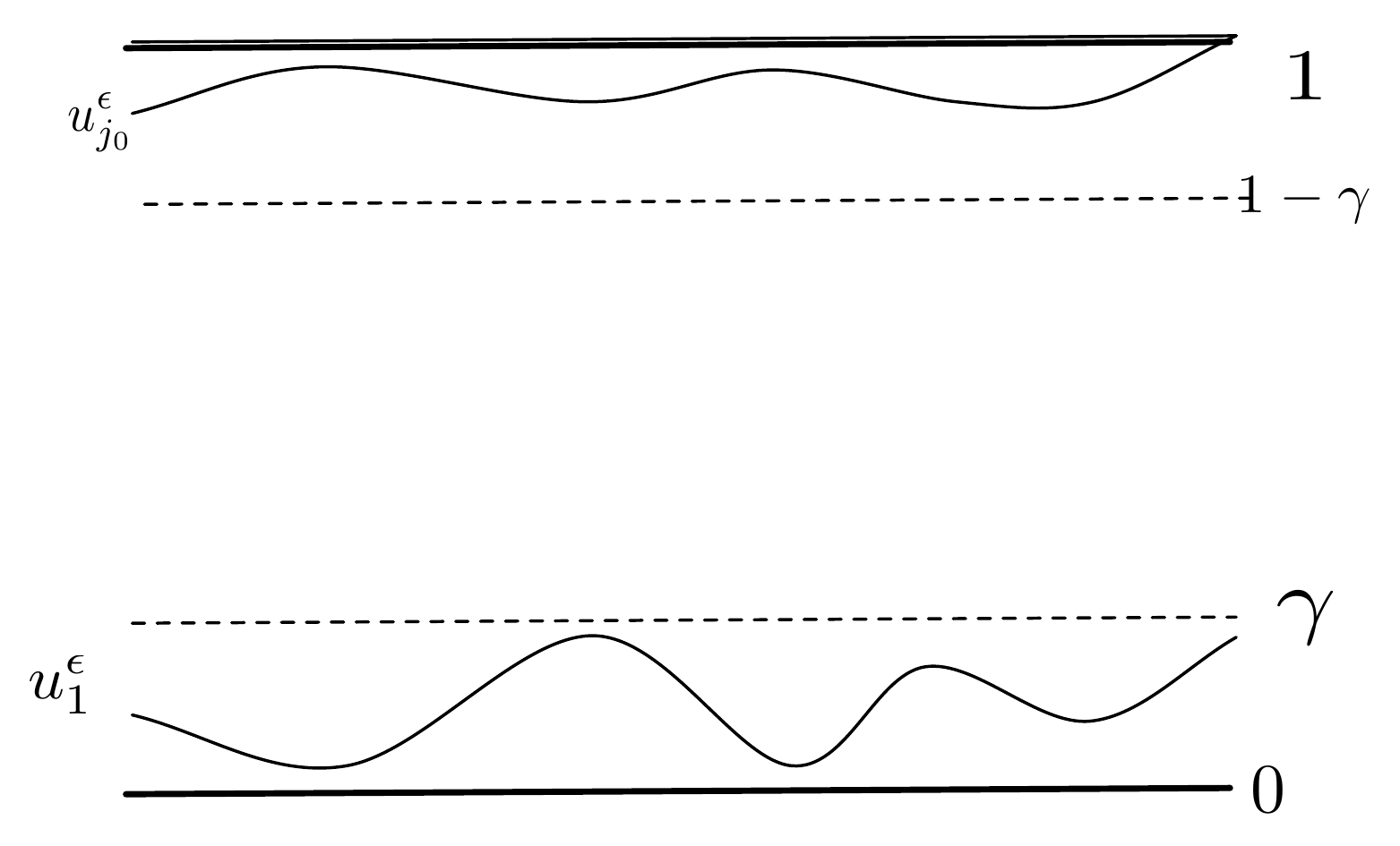}
\caption{All oscillations are smaller than $\gamma$. }
\label{fig:case2}
\end{center}
\end{figure}

Then in particular the function $\ueind{j_{0}}$ has oscillation smaller than $\gamma.$ Thus, 
$$
\ueind{j_{0}} (x) \geq 1-\gamma
$$
 and so for all $j \neq j_{0},$ we have for all $j\neq j_{0}, $
$$
 \puccin (\ueind{j}) \geq (1-\gamma) \ueind{j}.
$$
So again by (3) in Lemma  \ref{lemma: cases of decay} we can conclude that for all $j\neq j_{0}, $ $O_{j}$ decays. In particular, the largest oscillation decayed.
\end{enumerate}

\underline{Case 2:} Let us assume that $\frac{1}{\epsilon} <1$. Let $\theta=\frac{1}{\epsilon}.$  \\
Observe that, since all the functions are bounded from above by one and are positive, we have that for all $i \neq i_{0},$ and $i_{0}$ an arbitrarily  fixed indix,
\begin{equation}
\label{major estimate1}
   \theta \ueind{i_{0}}  \ueind{i} \leq \puccin(\ueind{i}) \leq  \theta \ueind{i} (d-1).
\end{equation}
In a more general point of view we have for any $i=1,\ldots, d,$
\begin{equation}
\label{major estimate2}
0 \leq  \puccin(\ueind{i}) \leq  d,
\end{equation}
We will use one or another expression according to convenience. There are two possible cases: either (1) $ \frac{1}{4} \leq O_1\leq1  $ or (2) $O_{1} <\frac{1}{4}$. We prove that in the first case all the big oscillations decay. So after a finite number of steps, (and  if all the functions did not decay yet in the mean time) we will have that all oscillations are less than $\frac{1}{4}.$ In this case, since the function that attains the maximum will have also oscillation less than $\frac{1}{4}$, we can prove that all the other functions decay. In this way, we will  eventually be again in the case of 
$$
\sum_{j\neq j_{0}} O_{j} \leq \delta O_{j_{0}}.
$$
and as before we have the result. The proof of the decay in each case follows.

\begin{enumerate}
\item Assume that $ \frac{1}{4} < O_{1} \leq 1. $ There exists an interior point  $y \in B_{\frac{1}{4}}(0),$  such that 
$$ \min \ueind{1}+ \frac{1}{8} \leq \ueind{1}(y) \leq \max \ueind{1} -\frac{1}{8}.$$  By (\ref{major estimate2}) we conclude that the equation for $\ueind{1}$ has right hand side continuous and bounded. Then, by regularity,  Proposition \ref{prop: interior holder},   we can conclude that there exists a universal constant  $C$ such that for all $x \in B_{\frac{1}{16\,C}}(y)$  
$$
\abs{\ueind{1}(x)-\ueind{1}(y)}\leq \norm{\nabla \ueind{1}}{L^{\infty}} \frac{1}{16\,C} \leq \norm{ \ueind{1}}{C^{1,\alpha}} \frac{1}{16\,C}\leq   \frac{1}{16}.
$$
And so,  for all $x  \in B_{\frac{1}{16\, C}}(y)$ we have that 
 $$\ueind{1} (x) \leq   \max \ueind{1} -\frac{1}{16}.$$
 Then by (1) of Lemma \ref{lemma: cases of decay}, we can conclude that
\begin{eqnarray*}
\osc_{B_{\frac{1}{4}}(0)}  \ueind{1} \leq \mu \:  \osc_{B_{1}(0)} \ueind{1}   \qquad \mbox {with} \: \quad \mu<1.
\end{eqnarray*}
With this argument we can conclude that all the functions with oscillations bigger than $\frac{1}{4}$ decay. Moreover, repeating the same argument we can conclude that all the functions with oscillations bigger than $\frac{1}{8}$ decay.

\item  Assume that  $O_{1} \leq \frac{1}{4}.$  Observe that in this case $\ueind{j_{0}}(x) \geq \frac{3}{4}$ for all $x \in B_{1}(0)$

Then for all $i \neq  j_{0}$ the function $\ueind{i}$ satisfies the equation
$$
   \frac{3}{4} \theta \, \ueind{i} \leq \puccin(\ueind{i}) \leq  \theta \, \ueind{i} (d-1)
$$
Meaning that,
$$
 \puccin(\ueind{i}) \sim \theta \, \ueind{i} 
$$
Consider $v_{i},$ $i \neq j_{0},$ the renormalized functions $\ueind{i},$ by the renormalization
$$
v_{i}(x) =\frac{1}{\max_{x \in B_{1}(0)}\:  \ueind{i} (x)}  \: \ueind{i } \left(x\right).
$$
Observe that $v_{i}$ has maximum $1$ and satisfies in $B_{1}(0),$
$$
   \theta \frac{3}{4} v_{i} \leq \puccin(v_{i}) \leq  \theta v_{i} (d-1).
$$

For each function $v_{i}$, $i\neq j_{0},$ we can have  two situations. Either (a) $v_{i} (x) \geq \frac{1}{2}$ for all $x$ or (b) $v_{i} (x) < \frac{1}{2}$ for some $x$ in $B_{1}(0).$    In both cases we will prove that $v_{i},$ $i \neq j_{0}$ decay in $B_{\frac{1}{4}}(0).$

\begin{enumerate}
\item If $v_{i} (x) \geq \frac{1}{2}$  for all $x \in B_{1}(0).$ Then 
$$
O_{i}= \osc_{x \in B_{1}(0)}v_{i} (x)  \leq \frac{1}{2}.
$$ 
{\it{We claim that: there exists a universal constant $N$ such that $\theta \leq N O_{i}.$}}

Assuming that the claim is true, observe that 
$$
 0 \leq \puccin(v_{i}) \leq d N O_{i},
$$
which implies that we can consider the function $\omega$ with oscillation 1  in $B_{1}(0)$ defined by  
\begin{eqnarray*}
\omega(x)= \frac{v_{i}(x)-\min_{z \in B_{1}(0)} v_{i}(z)}{O_{i} }.
 \end{eqnarray*}
 and that satisfies in $B_{1}(0)$
$$
0< \puccin\left(\omega \right) \leq d N .
$$ 
By  regularity, Proposition \ref{prop: interior holder}, there exists a universal constant $C$ depending on $N$ and $d$ such that,
\begin{eqnarray*}
\abs{\omega(x)-\omega(y)}\leq \norm{ \omega}{C^{1,\alpha}}\abs{x-y}\leq C(N,d)\abs{x-y}.
\end{eqnarray*}

 Assuming that $\omega $ didn't decay, let $y \in B_{\frac{1}{4}}(0)$ be such that $\omega (y) < \frac{1}{2}.$ Then, if \mbox{$\sigma= \min  \left( {\dist(y, \partial B_{\frac{1}{4}}),\frac{1}{4 C(N,d)}}\right)$}, for all $x \in B_{\sigma}(y)$
 $$
\abs{\omega(x)-\omega(y)}\leq \frac{1}{4} 
$$
And so, for $x \in B_{\sigma}(y),$
$$
\omega(x) \leq \frac{1}{4}+ \frac{1}{2} \leq \frac{3}{4}.
$$
Then by (1) of Lemma \ref{lemma: cases of decay}, we can conclude that for all $ i \neq j_{0}$, there exist $\mu<1$
$$
\osc_{B_{\frac{1}{4}}(0)}  v_{i} (x) \leq \mu \:  \osc_{B_{1}(0)}  v_{i} (x) \Rightarrow  \osc_{B_{\frac{1}{4}}(0)}  \ueind{i}(x) \leq \mu \: \osc_{B_{1}(0)}   \ueind{i}(x) (x).
$$

{{\it Proof of the claim:}}  consider by contradiction that $
\theta > 2 n \lambda \, 24\:  O_{i}.
$
So, for all $ x \in B_{1}(0),$
$$
 \frac{3}{4} v_{i} (x)  \theta > \frac{3}{8} \theta > 2 n \lambda \,  9\, O_{i}.
$$
Observe that, if   $m=\min_{z \in B_{1}(0)} v_{i}(z),$  $m < t < m + \osc_{B_{1}(0)} v_{i} (x)$ is a  positive constant to be chosen later   and 
$$
P(x)=  8\, O_{i} \abs{x}^{2} +  t
$$
we have that for any $x,$ and any $t,$
$$
\puccin (v_{i}) \geq \frac{3}{4} v_{i} (x)  \theta  > 2 n \lambda \, 9\:  O_{i} > \puccin (P(x))= 2 n \lambda \,8\, O_{i} .
$$
and so $P$ can not touch $v_{i}$ from above at any $y,$ since it would contradict  that the $v_{i}$ is a subsolution.
But  since for $t=m$ and $\abs{x}=1,$ 
$$
P(x) \geq m + 8 O_{i} \quad \mbox{and} \quad P(0)=m,
$$
$P$ crosses the function $v_{i}$, and so it is possible to find $t$ such that $P$ would touch the function $v_{i}$ from above at some point, which is a contradiction.
\item  If  $v_{i} (y) < \frac{1}{2}$ for some interior point $y$ of $B_{\frac{1}{4}}(0)$ we proceed as before and use regularity.  If the function never attains a value less than $\frac{1}{2}$ in $B_{\frac{1}{4}}(0)$ then  the function has decayed. 
\end{enumerate}
\end{enumerate}
\end{proof}

With  this uniform bound in the  Banach space \cspace{\alpha}, one can conclude that these sequence converges uniformly (up to a subsequence) to a vector of functions $\uu. \: $

\section{Characterization of limit problem: Proof of Theorem \ref{theorem: limit problem}}
\label{sec: limit}

In this section we will assume without loss of generality that $\lambda=1.$
Observe that if $\uue$ is a viscosity  solution of Problem (\ref{eq: main problem}) then there exists a subsequence still indexed by $\epsilon$ and function $\uu \in \left( \cspace{\alpha}{} \right)^{d}$ such  that  
$$
 \uue \rightarrow \uu \qquad \mbox{ uniformly.}
$$

The following  Lemma  characterizes the Laplacian of the limit solution.

\begin{lemma}
\label{lemma: proof supersolution}
If  $\uu \in \left( \cspace{\alpha}{} \right)^{d}$  is the limit of a solution of (\ref{eq: main problem})  then 
 $ \Delta u_{i} $ are positive measures.
\end{lemma}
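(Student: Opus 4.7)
The plan is to pass to the distributional limit in the uniform estimate that each $\ueind{i}$ is subharmonic, and then invoke the classical identification of positive distributions with positive Radon measures.

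First I would note that, for each fixed $\epsilon>0$ and each $i$, the function $\ueind{i}$ is non-negative, and since $\ueind{j}\geq 0$ for $j\neq i$, the right-hand side of the equation satisfies
\begin{eqnarray*}
\frac{1}{\epsilon}\,\ueind{i}\sum_{j\neq i}\ueind{j}\;\geq\;0.
\end{eqnarray*}
Combined with the elementary inequality $\lambda\,\Delta\omega\geq\puccin(\omega)$ (used already in the paper to assert subharmonicity of \uue), this gives
\begin{eqnarray*}
\lambda\,\Delta\ueind{i}\;\geq\;\puccin(\ueind{i})\;=\;\frac{1}{\epsilon}\,\ueind{i}\sum_{j\neq i}\ueind{j}\;\geq\;0
\end{eqnarray*}
in the viscosity sense. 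By the Remark following the statement of the main problem, a continuous viscosity subsolution of the Laplace equation is also a distributional subsolution, so for every non-negative $\phi\in C^\infty_{0}(\Omega)$,
\begin{eqnarray*}
\int_{\Omega}\ueind{i}\,\Delta\phi\;\dx\;\geq\;0.
\end{eqnarray*}

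Next, I would use the uniform convergence $\ueind{i}\to\uind{i}$ in $\cspace{\alpha}{}$ (which is at our disposal by Theorem \ref{theorem: regularity of solutions} and the extraction argument carried out at the start of Section \ref{sec: limit}). Since $\Delta\phi$ is a fixed integrable function with compact support, we may pass to the limit under the integral sign:
\begin{eqnarray*}
\int_{\Omega}\uind{i}\,\Delta\phi\;\dx\;=\;\lim_{\epsilon\to 0}\int_{\Omega}\ueind{i}\,\Delta\phi\;\dx\;\geq\;0\qquad\forall\,\phi\in C^{\infty}_{0}(\Omega),\;\phi\geq 0.
\end{eqnarray*}
Hence $\Delta\uind{i}\geq 0$ in $\distspace{\Omega}$.

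Finally, I would invoke the standard fact (Schwartz's theorem on non-negative distributions) that any non-negative distribution on an open set is represented by a unique positive Radon measure. Applied to $\Delta\uind{i}$, this yields the claim. The argument is short and essentially bookkeeping; the only step that might look delicate is justifying the interchange of limit and integral, but this is immediate from the uniform convergence on the compact support of $\phi$.
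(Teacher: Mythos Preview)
Your proposal is correct and follows essentially the same approach as the paper: use $\lambda\Delta\ueind{i}\geq\puccin(\ueind{i})\geq 0$ to get distributional subharmonicity, pass to the limit via uniform convergence against a fixed test function, and identify the resulting non-negative distribution with a Radon measure. The paper's proof is just a more compressed version of exactly this argument.
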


\begin{proof}
 Let $\phi$ be positive a test function. Then
\begin{eqnarray*}
0\leq \int \puccin (\ueind{i}) \phi \leq \int \D\ueind{i} \phi = \int \D \phi \ueind{i}  \rightarrow  \int \D \phi \uind{i} = \int \phi \D \uind{i} 
\end{eqnarray*}
and so $\D \uind{i}$ is a positive distribution which implies that it is a nonnegative Radon measure.
\end{proof}

Now we are ready to prove Theorem \ref{theorem: limit problem} (stated in Section \ref{sec: main results}).

\begin{proof} (1)
 Observe that
\begin{eqnarray*}
\puccin \left(    \ueind{i} - \sum_{k\neq i}  \ueind{k} \right) &\leq & \puccin(\ueind{i} ) + \puccip \left(- \sum_{k\neq i}  \ueind{k}\right) \leq \puccin(\ueind{i} ) -\puccin \left(\sum_{k\neq i}  \ueind{k}\right) \\
&\leq& \puccin(\ueind{i} ) -\sum_{k\neq i} \puccin(\ueind{k})\leq 0.
\end{eqnarray*}
As $ \ueind{i} - \sum_{k\neq i}  \ueind{k} \rightarrow \uind{i} - \sum_{k\neq i}  \uind{k}  $  when $\epsilon \rightarrow 0$ uniformly and $\overline{S}(\lambda, \Lambda, 0)$ is closed under uniform convergence,
$ \uind{i} - \sum_{k\neq i}  \uind{k}   $
is a supersolution of \puccin :
$$
\puccin \left(    \uind{i} - \sum_{k\neq i}  \uind{k} \right) \leq 0
$$

(2) If $\uind{i}( x_{0})= \alpha_{0}>0$ for any  $i$, $i=1, \ldots, d$ then for $\delta < \frac{\alpha_{0}}{2}$ there exists an $\epsilon_{0}$ such that for $\epsilon < \epsilon_{0}$,  $ \frac{ \alpha_{0}}{2}< \alpha_{0} - \delta <\ueind{i}(x_{0})< \alpha_{0} + \delta< \frac{ 3 \alpha_{0}}{2}$. So 
 by \holder   continuity there exist $h>0$ such that:
  \begin{enumerate}
\item[a)] $\vert \ueind{i} (y)- \ueind{i}(x_{0}) \vert  \leq \frac{\alpha_{o}}{4}$ in $B_{2h}(x_{0});$\\
\item[b)] $\ueind{i} (y) > \frac{\alpha_{0}}{4}$ in a ball of radius $2h$ and center $x_{0}.$
\end{enumerate}
Observe that applying Green's Identity with a function $u$ and the fundamental solution  \begin{eqnarray*}
\tilde{ \Gamma}(x_{0}) = \frac{1}{n \omega_{n}(2-n)} \left( \frac{1}{\abs{x-x_{0}}^{n-2}} - \frac{1}{\abs{2\,h}^{n-2}} \right),
\end{eqnarray*}
we obtain the inequality
\begin{eqnarray*}
  h^{2}  \fint_{ B_{h}(x_{0})}  \D u \:  \diff{x}  \leq C
 \fint_{\partial B_{2h}(x_{0})}\left( u(x) - u(x_{0})\right) \diff{S}, 
\end{eqnarray*}
where $C$ is a constant just depending on $n$. From the equation for $\ueind{i}$, we obtain
\begin{eqnarray*}
 \fint_{ B_{h}(x_{0})}   \frac{\alpha_{0}\,\sum_{k \neq i}\ueind{k}}{4 \epsilon} \diff{x} &\leq&
\fint_{ B_{h}(x_{0})}   \frac{\ueind{i}\, \sum_{k \neq i}\ueind{k}}{\epsilon} \diff{x} 
=
 \fint_{ B_{h}(x_{0})}  \puccin (\ueind{i} ) \diff{x} \\
 &\leq& 
 \fint_{ B_{h}(x_{0})}  \D \ueind{i} \diff{x} 
 \leq
 \frac{C}{h^{2}} \fint_{\partial B_{2h}(x_{o})}\left( \ueind{i} (y) - \ueind{i} (x_{o})\right) \diff{S}\\
 & \leq& 
 \frac{C \,\alpha_{0}}{4\,h^{2}}.
\end{eqnarray*}
Thus, 
$$
 \fint_{ B_{h}(x_{o})}   \frac{\alpha_{0}\,\sum_{k \neq i}\ueind{k}}{2} \diff{x} \leq
 \frac{\epsilon\, C \,\alpha_{0}}{4\,h^{2}},
$$
which implies that, 
$$ \fint_{ B_{h}(x_{o})} \sum_{k \neq i}\ueind{k} \rightarrow 0$$ when \etz.   
By subharmonicity, 
$$
\sum_{k \neq i}\ueind{k}(x_{0}) \leq \fint_{ B_{h}(x_{0})} \sum_{k \neq i}\ueind{k} \rightarrow 0
$$
and so $\sum_{k \neq i}\uind{k} (x_{0})=0$  and this proves the result.

(3) To prove that $\puccin  (u_{i}) = 0,$ when $ u_{i}>0$ assume the set up in the beginning of (2) for $\uind{i}(x_{0}).$ We need to prove  that 
$  \frac{\ueind{i}\, \sum_{k \neq i}\ueind{k}}{\epsilon} \rightarrow 0$ uniformly when \etz  $\:$ in order to use the closedness of $S(\lambda, \Lambda)$. Since,
$$
 \D \left( \frac{ \sum_{k \neq i}\ueind{k}}{\epsilon} \right) \geq  \puccin   \left( \frac{ \sum_{k \neq i}\ueind{k}}{\epsilon} \right) \geq     \frac{1}{\epsilon} \sum_{k \neq i} \puccin \ueind{k}   \geq  0,
$$
$ \frac{ \sum_{k \neq i}\ueind{k}}{\epsilon}$ is subharmonic. So,  if we prove that $  \frac{ \sum_{k \neq i}\ueind{k}}{\epsilon} \rightarrow 0$ in $L^{1}(B_{h}(x_{0}))$, then for \mbox{$y \in \overline{B}_{h-\delta' }(x_{o}),$} 
$$
 \frac{ \sum_{k \neq i}\ueind{k} (y)}{\epsilon} \leq \fint_{B_{\delta'}(y)} \frac{ \sum_{k \neq i}\ueind{k}}{\epsilon} \diff{x} \rightarrow 0,
$$
and so $ \frac{ \sum_{k \neq i}\ueind{k}}{\epsilon} \rightarrow 0$ convergences uniformly in a compact set contained in  $B_{h}(x_{0})$. 
Recall  that we proved in (2) that $ \sum_{k \neq i}\ueind{k} \rightarrow 0$ uniformly in a compact set contained in  $B_{h}(x_{0})$ and we have that
\begin{eqnarray*}
  \sum_{k \neq i}\ueind{k}  \rightarrow 0 \Rightarrow \D \left(\sum_{k \neq i}\ueind{k} \right)  \rightarrow 0 \: \mbox{in the sense of distributions.}
  \end{eqnarray*}
 So, since
\begin{eqnarray*}
 \frac{\alpha_{0}}{2} \frac{\, \sum_{k \neq i}\ueind{k}}{\epsilon}&  \leq & \frac{\ueind{i}\, \sum_{k \neq i}\ueind{k}}{\epsilon} = \puccin (\ueind{i}) \leq  \sum_{k \neq i} \puccin \ueind{k} \\
 &\leq& \puccin \left(\sum_{k \neq i}\ueind{k}\right) \leq \D \left(\sum_{k \neq i}\ueind{k}\right), 
 \end{eqnarray*}
  we conclude that
 $$  \frac{ \sum_{k \neq i}\ueind{k}}{\epsilon} \rightarrow 0 \: \mbox{ in} \: L^{1}.$$
Then, as we said, we have that  
   $$  \frac{ \sum_{k \neq i}\ueind{k}}{\epsilon} \rightarrow 0 \:  \mbox{uniformly in a compact set contained in} \:  B_{h}(x_{0}).$$

 As  $\ueind{i} \rightarrow u_{i}$ uniformly and are bounded, we finally conclude that
 $$
 \ueind{i}\,\frac{ \sum_{k \neq i}\ueind{k} }{\epsilon} \rightarrow  0  \: \mbox{uniformly in a compact set contained in} \: B_{h}(x_{0}).
 $$
 Proceeding analogously with $u_{k}$, $k=1, \cdots, n $ we conclude that the limit problem is
$$
\puccin  (u_{i}) = 0, \qquad u_{i}>0 \qquad i=1, \ldots, d.
$$

(4) To prove the last statement  we will construct an upper and lower barrier.

\begin{enumerate}
\item[(a)] Consider as  upper barriers the solutions of the $d$ problems, ($i=1, \ldots, d$):
$$
\puccin(u^{\star}_{i})=0 \quad  \mbox{in} \: \Omega \qquad \mbox{and} \qquad u^{\star}_{i}(x)= \phi_{i}(x)\chi_{\{\phi_{i}(x)\neq 0\}} \quad  \mbox{in} \: \partial \Omega
$$
So we have that  for all $i =1, \ldots, d, $
$$
\puccin(u^{\star}_{i})=0 \leq   \frac{1}{\epsilon} \ueind{i}\, \sum_{k \neq i}\ueind{k} = \puccin(\ueind{i})  \quad \mbox{in} \: \Omega
$$
and 
$$
u^{\star}_{i}(x)= \ueind{i}(x) \quad  \mbox{for all } \quad x \in \partial \Omega.
$$
So by the comparison principle, for all $i$ and for all $\epsilon$ we have the upper bound
$$
u^{\star}_{i}(x)\geq  \ueind{i}(x) \quad  \mbox{for all } \quad x \in  \overline { \Omega}.
$$
Taking limits in $\epsilon$ we can deduce that for all $i$
$$
\phi_{i}(x) \geq  \uind{i}(x)  \quad  \mbox{for all } \quad x \in  \partial \Omega.
$$
\item[(b)]  Now consider as  a lower barrier the solution of the problem:
$$
\puccin(\omega_{i})=0 \quad  \mbox{in} \: \Omega \qquad \mbox{and} \qquad \omega_{i}(x)= \phi_{i}(x) -\sum_{j \neq i}\phi_{j}(x) \quad  \mbox{in} \: \partial \Omega
$$
So we have that  for all $i =1, \ldots, d, $
$$
\puccin(  \ueind{i} - \sum_{i \neq j} \ueind{j}   ) \leq 0 =    \puccin(\omega_{i})  \quad \mbox{in} \: \Omega
$$
and 
$$
  \ueind{i}(x) - \sum_{i \neq j} \ueind{j}(x)= \omega_{i}(x) \quad  \mbox{for all } \quad x \in \partial \Omega.
$$
So by the comparison principle, for all $i$ and for all $\epsilon$ we have the lower bound
$$
 \ueind{i}(x) - \sum_{i \neq j} \ueind{j}(x) \geq  \omega_{i}(x) \quad  \mbox{for all } \quad x \in  \overline { \Omega}.
$$
Taking the limit in $\epsilon$ we can deduce that for all $i$
$$
\uind{i}(x) - \sum_{i \neq j} \uind{j}(x) \geq\phi_{i}(x) -\sum_{j \neq i}\phi_{j}(x)   \quad  \mbox{for all } \quad x \in  \partial \Omega.
$$
Since by hypothesis we know that $\phi_{i}$ have disjoint supports, when $\phi_{i}(x)\neq 0$, 
$$
\uind{i}(x) \geq \phi_{i}(x),
$$
and this proves the statement. 
\end{enumerate} 
\end{proof}

\section{Lipschitz regularity for the free boundary problem: proof of Theorem \ref{theorem: lips}}
\label{sec: lipschitz}

The  regularity theorem of this section  is a very important result relating the growth of one  of the functions in terms of the  distance of the function to the free boundary. In the proof we will need to use barriers, properties of subharmonic functions and the monotonicity formula introduced in \cite{alt_variational_1984}. In order to simplify the reading of the paper, the linear decay to the boundary is done in the beginning of this section and uses the construction of fundamental barriers that is done in Appendix E. The monotonicity formula is stated in Appendix \ref{apd for lips: monotonicity} and the  study of the $L^{\infty}$ decay for subharmonic functions supported in a small domains is presented in Appendix \ref{apd for lips: subhar}.

\begin{lemma}[Linear decay to the boundary normalized]
\label{wall of barriers} 
Let $v$ be a non-negative continuous function defined in $\Omega= B_{\sigma}(z_{0})\backslash B_{1}(0),$ where  $\sigma \leq \frac{1}{2}$ and $z_{0}$ is, without loss of generality, a point on $\partial B_{1}(0)$ with $\frac{z_{0}}{\abs{z_{0}}}=e_{n}, $ $e_{n} $ the unit vector. Assume that,
\begin{enumerate}
\item $\puccip (v) \geq 0$ in $\Omega,$
\item $ v(x) \leq U \sigma $ in $\Omega,$
\item $ v(x)=0 $ on $\partial B_{1}(0).$
\end{enumerate}
 then, there exists a universal constant $\tilde{C},$ $\tilde{C}=\frac{8}{5} \frac{\alpha}{\frac{1}{5}-\left(\frac{1}{5}\right)^{\alpha+1}}, $ such that,
 $$
 v(x)\leq \tilde{C}U \dist(x, \partial B_{1}), 
 $$
 when $x \in S_{\sigma}$ with $S_{\sigma}:= \left( B_{1+\frac{\sigma}{4}}(0)\backslash B_{1}(0)\right) \cap \{ x=(x',x_{n}): \abs{x'-z'_{0}}<\frac{\sigma}{2} \}.$ In particular,
  $$
 v(x)\leq \tilde{C} \, U \dist(x, \partial B_{1}), 
 $$
  when $x \in B_{\frac{\sigma}{4}}(z_{0}).$ 
 \end{lemma}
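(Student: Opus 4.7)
The strategy is to build, for each point $x_0 \in S_\sigma$, an explicit radial \emph{fundamental barrier} for $\puccip$ centered at a point inside $B_1$ whose inner sphere is internally tangent to $\partial B_1$ at the closest boundary point $y_0 = x_0/\abs{x_0}$, and to bound $v$ from above by this barrier via the comparison principle. The construction mirrors the standard fundamental-solution barrier (cf.\ Appendix~E), and the scaling is chosen so that the linear decay constant $\tilde{C}$ depends only on $\alpha$ (hence on $n, \lambda, \Lambda$ alone).

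Choose $\alpha$ universal with $\alpha > \Lambda(n-1)/\lambda - 1$, and set $a = \sigma/8$, $b = 5a$. Consider the radial profile
\[
\psi(r) = \frac{a^{-\alpha} - r^{-\alpha}}{a^{-\alpha} - b^{-\alpha}}, \qquad a \leq r \leq b,
\]
which satisfies $\psi(a) = 0$, $\psi(b) = 1$, is increasing and concave, and, for any center $p$, yields $\puccip\!\bigl(\psi(\abs{\cdot - p})\bigr) < 0$ strictly on $B_b(p) \setminus B_a(p)$ by the direct eigenvalue computation (radial eigenvalue $\psi''<0$, tangential eigenvalues $\psi'/r>0$ with multiplicity $n-1$). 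Given $x_0 \in S_\sigma$, let $y_0 = x_0/\abs{x_0} \in \partial B_1$ and place the center at $p_0 = (1-a)y_0$, so that $B_a(p_0)$ is internally tangent to $\partial B_1$ at $y_0$; a direct computation gives $\abs{x_0 - p_0} = a + d_0$ with $d_0 := \dist(x_0, \partial B_1) \leq \sigma/4$. Define $\Psi(x) := U\sigma \, \psi(\abs{x - p_0})$. By the comparison principle applied to the viscosity subsolution $v$ of $\puccip = 0$ and the strict classical supersolution $\Psi$ on $D := (B_b(p_0) \setminus B_a(p_0)) \cap \Omega$, we infer $v \leq \Psi$ in $D$. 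Using the concavity of $\psi$ with $\psi(a) = 0$ gives $\psi(a + d_0) \leq \psi'(a)\, d_0$, whence
\[
v(x_0) \leq U\sigma \cdot \psi'(a)\, d_0 = U\sigma \cdot \frac{\alpha}{a\,(1 - 5^{-\alpha})}\, d_0 = \frac{8\alpha}{1 - 5^{-\alpha}}\, U\, d_0 = \tilde{C}\, U\,\dist(x_0, \partial B_1),
\]
the constant $\tilde{C} = 8\alpha/(1 - 5^{-\alpha}) = \tfrac{8}{5}\alpha /(\tfrac{1}{5} - (\tfrac{1}{5})^{\alpha+1})$ matching the statement. The second claim, for $x \in B_{\sigma/4}(z_0)$, follows from the inclusion $B_{\sigma/4}(z_0) \cap \overline{B_1^c} \subset S_\sigma$.

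The main technical step is the verification of the inequality $\Psi \geq v$ on every piece of $\partial D$. Three of the pieces are routine: $\partial B_a(p_0) \cap \overline{D} = \emptyset$ (since $B_a(p_0) \subset \overline{B_1}$ and $\Omega \cap B_1 = \emptyset$), $v = 0 \leq \Psi$ on $\partial B_1 \cap \overline{D}$, and $\Psi = U\sigma \geq v$ on $\partial B_b(p_0) \cap \overline{D}$ by hypothesis (2). The delicate piece is $\partial B_\sigma(z_0) \cap \overline{D}$, which must be shown to be empty; equivalently, $B_b(p_0) \cap \overline{B_1^c} \subset B_\sigma(z_0)$, which in turn reduces to the geometric estimate $\abs{p_0 - z_0} + b \leq \sigma$. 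For $x_0$ close to $z_0$ this inequality is comfortable, but for $x_0$ toward the periphery of $S_\sigma$ (where $\abs{y_0 - z_0}$ may approach $\sigma/2$) it is borderline, and the name \emph{wall of barriers} alludes to the arrangement of a finite family of overlapping tangent barriers placed along a portion of $\partial B_1$ near $z_0$ --- each producing the \emph{same} sharp linear rate at its own tangent point --- whose combined coverage gives the bound throughout $S_\sigma$. Making this geometric covering precise while preserving the universal constant $\tilde{C}$ is the principal obstacle.
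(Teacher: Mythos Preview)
Your approach is essentially the paper's: place a family of radial $\puccip$-supersolutions with inner ball of radius $\sigma/8$ internally tangent to $\partial B_1$, apply comparison, and use radial concavity to convert the barrier bound into a linear bound in $\dist(x,\partial B_1)$ with slope $\tilde C U$. Your barrier, parameters $a=\sigma/8$, $b=5\sigma/8$, and the resulting constant all match the paper's exactly.

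The one simplification you are missing, and which dissolves what you call the ``principal obstacle,'' is the first line of the paper's proof: extend $v$ by zero to all of $B_\sigma(z_0)$. Since $v\ge 0$, $v=0$ on $\partial B_1$, and $\puccip(v)\ge 0$ in $\Omega$, the zero extension remains a continuous viscosity subsolution of $\puccip=0$ on the full ball $B_\sigma(z_0)$ (any test paraboloid touching from above at a point of $\partial B_1$ has a minimum there, hence $\puccip(P)\ge 0$). Now run the comparison on the \emph{full} annulus $B_{5\sigma/8}(y)\setminus B_{\sigma/8}(y)$, not on its intersection with $\Omega$: the boundary consists only of the two spheres, and the delicate piece $\partial B_\sigma(z_0)$ never appears. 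The only remaining requirement is $B_{5\sigma/8}(y)\subset B_\sigma(z_0)$, which the paper enforces by restricting the centers to $y\in B_{3\sigma/8}(z_0)\cap \partial B_{1-\sigma/8}(0)$; the triangle inequality then gives the containment immediately. For each target point $x$ one uses the barrier whose inner sphere is tangent to $\partial B_1$ at $x/|x|$, i.e.\ $y=(1-\sigma/8)\,x/|x|$, so that $|x-y|-\sigma/8=|x|-1=\dist(x,\partial B_1)$ exactly and the constant $\tilde C$ is preserved without any covering argument or loss. (A routine check shows this $y$ lies in $B_{3\sigma/8}(z_0)$ whenever $x\in B_{\sigma/4}(z_0)$, which is what the application needs; the stated region $S_\sigma$ is slightly generous.)
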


\begin{proof} Consider the function $v$ extended by zero to all $B_{\sigma}(z_{0}).$ Observe that the extension still satisfies the hypotheses. We will take a barrier $\phi$ as in Lemma \ref{lemma: barrier super} with $r=\frac{5 \sigma}{8},$ $\frac{a}{b}=\frac{1}{5}$ and $M= U \frac{8}{5},$ that will be used as a model and will be sliding  tangentially along $\partial B_{1}(0) $ in order to construct a wall of barriers (see Figure \ref{fig:barriers_wall_cropped}).
With that purpose, take a family of balls $\{ B_{\frac{\sigma}{8}}(y)\}_{y}$ such that  $y \in B_{\frac{ 3 \sigma}{8}} (z_{0})\cap  \partial B_{1-\frac{\sigma}{8}}(0) .$  So the balls $B_{\frac{\sigma}{8}}(y)$ are tangent to $\partial B_{1}(0) $ and are inside $B_{1}(0),$ where $v$ is zero. 
 For each ball consider $\phi$ is such that: 
 \begin{enumerate}
 \item[(a)] $\phi (x) = U\sigma$ for $x \in \partial B_{\frac{5\sigma}{8}}(y);$
  \item[(b)] $\phi (x) =0 $ for $x \in \partial B_{\frac{\sigma}{8}} (y);$
 \item[(c)] $\puccip (\phi) \leq 0$ in $B_{ \frac{5 \sigma}{8}   }(y) \backslash B_{\frac{\sigma}{16}}(y);$
 \item[(d)] $\dd{ \phi }{\nu }(x)= U \frac{8}{5} \frac{\alpha}{\frac{1}{5}-\left(\frac{1}{5}\right)^{\alpha+1}}$ when $\abs{x-y}=\frac{\sigma}{8}.$
 \end{enumerate}
 Note that  $\cup_{y} B_{ \frac{5 \sigma}{8}   }(y) \subset B_{\sigma}(z_{0})$ and so for  all $y$ defined previously and for $x \in \partial \left( \cup_{y} B_{ \frac{5 \sigma}{8}   }(y)\right)$
we know by hypothesis that   
 $$v(x)\leq U\sigma.$$ We now apply  the comparison principle  for each barrier depending on $y $ and respective ring $B_{ \frac{5 \sigma}{8}   }(y) \backslash B_{\frac{ \sigma}{8}}(y),$  
 since $v$ is a subsolution and $\phi $
 a supersolution for $\puccip,$  and we obtain that 
 \begin{eqnarray*}\phi(x) \geq v(x) , \quad   x \in \partial B_{ \frac{5 \sigma}{8}   }(y) \cup \partial B_{\frac{\sigma}{8}}(y)   \Rightarrow  \phi(x) \geq v(x) , \quad  x \in B_{ \frac{5 \sigma}{8}   }(y) \backslash B_{\frac{\sigma}{8}}(y).
\end{eqnarray*}
Hence,  repeating this for all $y$ we obtain that
$$
v(x) \leq \phi(x),
$$
for all $ x \in   S_{\sigma}= \left( B_{1+\frac{\sigma}{4}}(0)\backslash B_{1}(0)\right) \cap \{ x=(x',x_{n}): \abs{x'-z'_{0}}<\frac{\sigma}{2} \}.$
Taking in account that $\phi$ is radially concave, we  also obtain that,
\begin{equation}
\label{bound: surface for v}
v(x) \leq U \frac{8}{5} \frac{\alpha}{\frac{1}{5}-\left(\frac{1}{5}\right)^{\alpha+1}} \dist (x, \partial B_{1}(0)).
\end{equation}
For the final remark, observe that $B_{\frac{\sigma}{4}}(z_{0}) \subset S_{\sigma}.$
\end{proof}

\begin{coro}[Linear decay to the boundary]
\label{wall of barriers normalized}
Let $v$ be a non-negative continuous function defined in $\Omega_{t_{0}}= B_{\tilde{\sigma}}(t_{0}z_{0})\backslash B_{t_{0}}(0),$ where  $\tilde{\sigma} \leq \frac{t_{0}}{2}$ and $t_{0}z_{0}$ is, without loss of generality, a point on $\partial B_{t_{0}}(0)$ with $\frac{z_{0}}{\abs{z_{0}}}=e_{n}, $ $e_{n} $ the unit vector. Assume that,
\begin{enumerate}
\item $\puccip (v) \geq 0$ in $\Omega_{t_{0}},$
\item $ v(x) \leq \tilde{U} \tilde{\sigma} $ in $\Omega_{t_{0}},$
\item $ v(x)=0 $ on $\partial B_{t_{0}}(0).$
\end{enumerate}
 then, there exists a universal constant $\tilde{C},$ $\tilde{C}=\frac{8}{5} \frac{\alpha}{\frac{1}{5}-\left(\frac{1}{5}\right)^{\alpha+1}}, $ such that,
 $$
 v(x)\leq \tilde{C} \tilde{U} \dist(x, \partial B_{t_{0}}), 
 $$
 when $x \in S_{\tilde{\sigma}}$ with $S_{\tilde{\sigma}}:= \left( B_{t_{0}+\frac{\tilde{\sigma}}{4}}(0)\backslash B_{t_{0}}(0)\right) \cap \{ x=(x',x_{n}): \abs{x'-t_{0}z'_{0}}<\frac{\tilde{\sigma}}{2} \}.$ In particular,
  $$
 v(x)\leq \tilde{C} \, \tilde{U} \dist(x, \partial B_{t_{0}}), 
 $$
  when $x \in B_{\frac{\tilde{\sigma}}{4}}(t_{0}z_{0}).$ 
\end{coro}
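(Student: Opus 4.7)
The plan is to reduce the corollary to Lemma \ref{wall of barriers} by a straightforward dilation argument: the only difference between the two statements is the radius of the excluded inner ball, and $\puccip$ together with all the relevant quantities behave well under scaling.

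First, I would set $\sigma := \tilde\sigma/t_{0}$, which satisfies $\sigma \leq \frac{1}{2}$ by hypothesis, and define the rescaled function
\begin{eqnarray*}
\tilde v(x) := v(t_{0} x), \qquad x \in B_{\sigma}(z_{0}) \setminus B_{1}(0).
\end{eqnarray*}
Then I would verify that $\tilde v$ meets the three hypotheses of Lemma \ref{wall of barriers} with the constant $U := \tilde U \, t_{0}$. The ellipticity scaling identity $\puccip(\tilde v)(x) = t_{0}^{2}\, \puccip(v)(t_{0} x) \geq 0$ handles (1); the bound $\tilde v(x) = v(t_{0} x) \leq \tilde U \tilde\sigma = U \sigma$ handles (2); and $\tilde v$ vanishes on $\partial B_{1}(0)$ because $t_{0} x$ lies on $\partial B_{t_{0}}(0)$ there, handling (3).

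Applying Lemma \ref{wall of barriers} to $\tilde v$ then gives
\begin{eqnarray*}
\tilde v(x) \leq \tilde C \, U \, \dist(x, \partial B_{1}(0)) \qquad \mbox{for } x \in S_{\sigma},
\end{eqnarray*}
with the same universal constant $\tilde C$. Undoing the substitution with $y = t_{0} x$, using $\dist(y/t_{0}, \partial B_{1}(0)) = \dist(y, \partial B_{t_{0}}(0))/t_{0}$ and $U = \tilde U \, t_{0}$, the two factors of $t_{0}$ cancel, yielding $v(y) \leq \tilde C\, \tilde U\, \dist(y, \partial B_{t_{0}}(0))$. A direct check shows that the dilated set $t_{0} S_{\sigma}$ coincides with $S_{\tilde\sigma}$ defined in the corollary, and likewise $t_{0} B_{\sigma/4}(z_{0}) = B_{\tilde\sigma/4}(t_{0} z_{0})$, which gives the final statement. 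There is no real obstacle here; the corollary is genuinely just the scale-invariant formulation of the lemma, and the entire argument is elementary dilation bookkeeping.
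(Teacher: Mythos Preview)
Your proposal is correct and follows essentially the same dilation argument as the paper. The only cosmetic difference is that the paper additionally divides by $\tilde U t_{0}$ so as to apply Lemma~\ref{wall of barriers} with $U=1$, whereas you apply it with $U=\tilde U t_{0}$; the cancellation of the $t_{0}$ factors is identical in both versions.
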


\begin{proof}
Consider the function 
$$u(x) =\frac{1}{\tilde{U} t_{0}}v(x\,t_{0})$$ 
defined for $x \in \Omega= B_{\sigma}(z_{0})\backslash B_{1}(0)$, with $\sigma=\frac{\tilde{\sigma}}{t_{0}}$. Observe that, $u$ satisfies the hypotheses of \mbox{Lemma \ref{wall of barriers}} with $U=1$ and notice that $\sigma\leq \frac{1}{2}.$  Then,  
$$
 u(x)\leq \tilde{C} \, \dist(x, \partial B_{1}), 
 $$
 when $x \in S_{\sigma}.$ Substituting, we obtain
 \begin{eqnarray*}
\frac{1}{\tilde{U} t_{0}}v(x\,t_{0})  \leq \, \frac{\tilde{C}  }{t_{0}}\dist(t_{0}x, \partial B_{t_{0}}) \quad \Leftrightarrow \quad v(y)   \leq\, \tilde{C} \,U\dist(y, \partial B_{t_{0}}), 
 \end{eqnarray*}
for $y \in S_{\tilde{\sigma}}.$
\end{proof}

We finally present the proof of Theorem \ref{theorem: lips}.

\begin{proof}
 (1) The proof is by contradiction. Let $x_{0}, z_{0} \in \partial (\supp \: \uind{1})$ be two interior points to be characterized later. We will find a smooth function $\eta $  such that in  \mbox{$B_{\delta}(z_{0}) \subset B_{R}(x_{0})$} touches the supersolution 
$$\uind{1}-\sum_{k \neq 1} u_{k}$$
 from below at $z_{0},$  and simultaneously satisfies 
$$
\puccin (\eta (z_{0})) > 0,
$$
and this contradicts the definition of supersolution. In fact, we will find a positive universal constant $C_{0}$ such that  if $\uind{1}(y)= CR $  for some $y \in B_{R}(x_{0})$ and  $C>C_{0}$ then we have a contradiction.

To simplify the notation let $u=\uind{1}$ and $ v=\sum_{k \neq 1} u_{k}.$
Let us assume that the free boundary intersects the ball centered at the origin, $\partial (\supp \:u) \cap B_{\frac{1}{2}}(0) \neq \emptyset. $

By contradiction, assume  without loss of generality that $u$ grows above any linear function in a ball centered at $x_{0}$; more precisely, assume that for any constant  $M'$
and
$$
x_{0} \in \partial (\supp \: u) \cap B_{\frac{1}{2}}(0),
$$
there exists  $y$ such that
$$
y \in B_{R}(x_{0}) \qquad \mbox{and} \quad u (y) = M' R,
$$
where $R<\frac{1}{4}$. Also,  we may assume that $R \leq 2d$ where $d=\dist(y, \partial \supp u)>0$ (if not we can always pick another $x_{0}$) and so we have
$$
y \in B_{R}(x_{0}) \qquad \mbox{and} \quad u (y) = M' 2 d = Md.
$$
For a later purpose we fix $z_{0} \in \partial B_{d}(y)\cap \partial (\supp \: u)$ (the closest point to $y$ in the free boundary).

As $u \geq 0$ and $u \in \sspace{*}(\lambda, \Lambda, 0)$ we can apply Harnack for viscosity solutions on $B_{d}(y)$, 
$$
\sup_{B_{\frac{d}{2}}(y)}  \, u \leq c \inf_{B_{\frac{d}{2}}(y)} u
$$
and conclude that for any $x \in B_{\frac{d}{2}}(y)$
\begin{equation}
\label{eq: harnack on u1}
\frac{1}{c}\, M\, d \leq u (x) \leq c\,M\,d
\end{equation}
Observe now that if $\omega (z)$ defined on $B_{d}(y)$ is a solution to $\puccin(\omega)=0$ then due to the  invariance under translation by $y$ (\ref{eq:1}), rotation by $R$ (\ref{eq:2}) and dilation by $\frac{1}{d}$ and rescaling by $d$ (\ref{eq:3}):
\begin{equation}
\label{eq:1}  \overline{\omega}(x)=\omega (\underbrace{x+y}_{z}), \quad   x \in B_{d}(0) \Rightarrow \puccin(\overline{\omega}(x))= \puccin (\omega(z)), \quad x \in B_{d}(0), 
\end{equation}
\begin{equation}
\label{eq:2}  \overline{\omega}(x)=\omega (Rx), \quad   x \in B_{d}(0) \Rightarrow \puccin(\overline{\omega}(x))= \puccin (\omega(z)), \quad x \in B_{d}(0), 
\end{equation}
\begin{equation}
\label{eq:3}  \overline{\omega}(x)= \frac{1}{d}\omega (dx), \quad   x \in B_{1}(0) \Rightarrow \puccin(\overline{\omega}(x))= d \puccin (\omega(z)), \quad x \in B_{1}(0),
\end{equation}
$\overline{\omega}(x)$ defined in $B_{1}(0)$ is still a solution to $\puccin(\overline{\omega})=0$ with the direction $e_{n}$ as we want.

So  we will prove this theorem using  translation, rotation, dilation, and rescaling arguments on $(\uind{1}, \uind{2},\cdots, \uind{d})$. In order to simplify the notation the new functions will always be denoted by the same name. 

Consider the functions $ u, v$ and $u-v $  satisfying 
\begin{eqnarray*}
\puccin (u) \geq 0, \quad \puccin (v) \geq 0, \quad \puccin ((u-v) (x)) \leq 0, 
\end{eqnarray*} 
(see 2 in Lemma  \ref{lemma: proof supersolution}), and    defined  on an appropriate domain by translation (\ref{eq:1}), rotation (\ref{eq:2}), dilation and rescaling (\ref{eq:3}) such that $y$ is the new origin,  the direction $\frac{z_{0}-y}{\abs{ z_{0}-y}}$ is now the direction $e_{n}$ and $d$ is now 1. We will still call $z_{0}$ the point in  $\partial B_{1}(0)\cap \partial \supp u.$  
 By (\ref{eq: harnack on u1}) we now have
$$
\frac{1}{d}\frac{1}{C}\, M\, d \leq u(x) \leq \frac{1}{d} C\,M\,d, \qquad x \in B_{\frac{1}{2}}(0), 
$$
 that is,
\begin{equation}
\label{bounds}
\frac{1}{C}\, M \leq u (x) \leq  C\,M,  \qquad x \in B_{\frac{1}{2}}(0).
 \end{equation}
The rest of the proof consists of the following steps:

\underline{First step:} we will prove that for a certain  $\overline{M},$ a positive large constant,
 $$
 u(x)\geq \overline{M} \underbrace{ d(x, \partial B_{1}(0))}_{1-\abs{x}} \qquad x \in B_{1}(0) \backslash  B_{\frac{1}{2}}(0).
 $$
\underline{Second step:}
 we will prove that, for a small $\rho,$
\begin{eqnarray}
\label{linear bound}
 v(x)\leq \frac{C}{\overline{M}} \underbrace{ d(x, \partial B_{1}(0))}_{\abs{z}-1} \qquad  x \in \mathrm{S_{\frac{\rho}{2}}},
 \end{eqnarray}
 where $S_{\frac{\rho}{2}}:=\left(B_{1+\frac{\rho}{8}}(0)\backslash B_{1}(0)\right)\cap \{(x',x_{n}) \in \rr^{n}: \abs{x'-z_{0}'}<\frac{\rho}{4}\}$ (see Figure \ref{fig:barriers_wall_cropped}).

\begin{figure}[h]
\begin{center}
\includegraphics[trim=0cm 0cm 27cm 0cm, clip=true, scale=0.4]{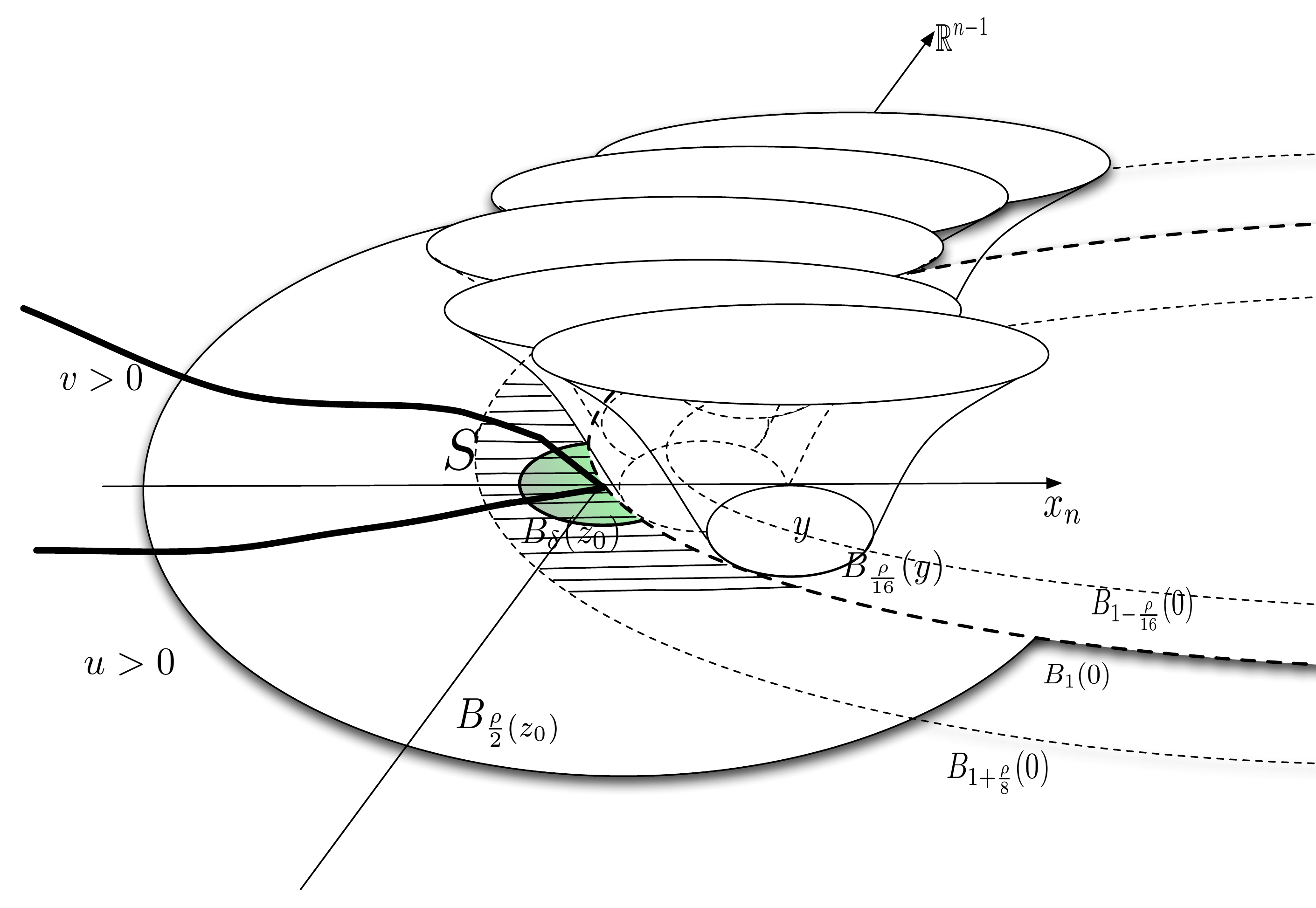}
\caption{Barriers to control $v$. In the picture $S$ is $S_{\frac{\rho}{2}}$ defined in Lemma \ref{wall of barriers}.  }
\label{fig:barriers_wall_cropped}
\end{center}
\end{figure}

 \underline{Third step:} Finally we will construct the function $\eta$ (see Figure \ref{fig:final barrier for u-v}) to obtain the contradiction, since we will have for $\delta \leq \frac{\rho}{8}$: 
\begin{itemize}
\item[-] $ \puccin (u -v)\leq 0 \qquad \mbox{for all} \: x \in B_{\delta}(z_{0})$;
\item[-] $ \eta (x) \leq \left( u -v \right) (x), \qquad \mbox{for all} \: x \in B_{\delta}(z_{0})$;
\item[-] $ \eta  (z_{0}) =  \left( u -v \right) (z_{0}) ;$
\item[-] $\eta$ is a smooth function such that $\puccin (\eta (z_{0})) > 0.$
\end{itemize}
and by definition of supersolution this is impossible and the result follows.

\begin{figure}[h]
\begin{center}
\includegraphics [scale=0.3]{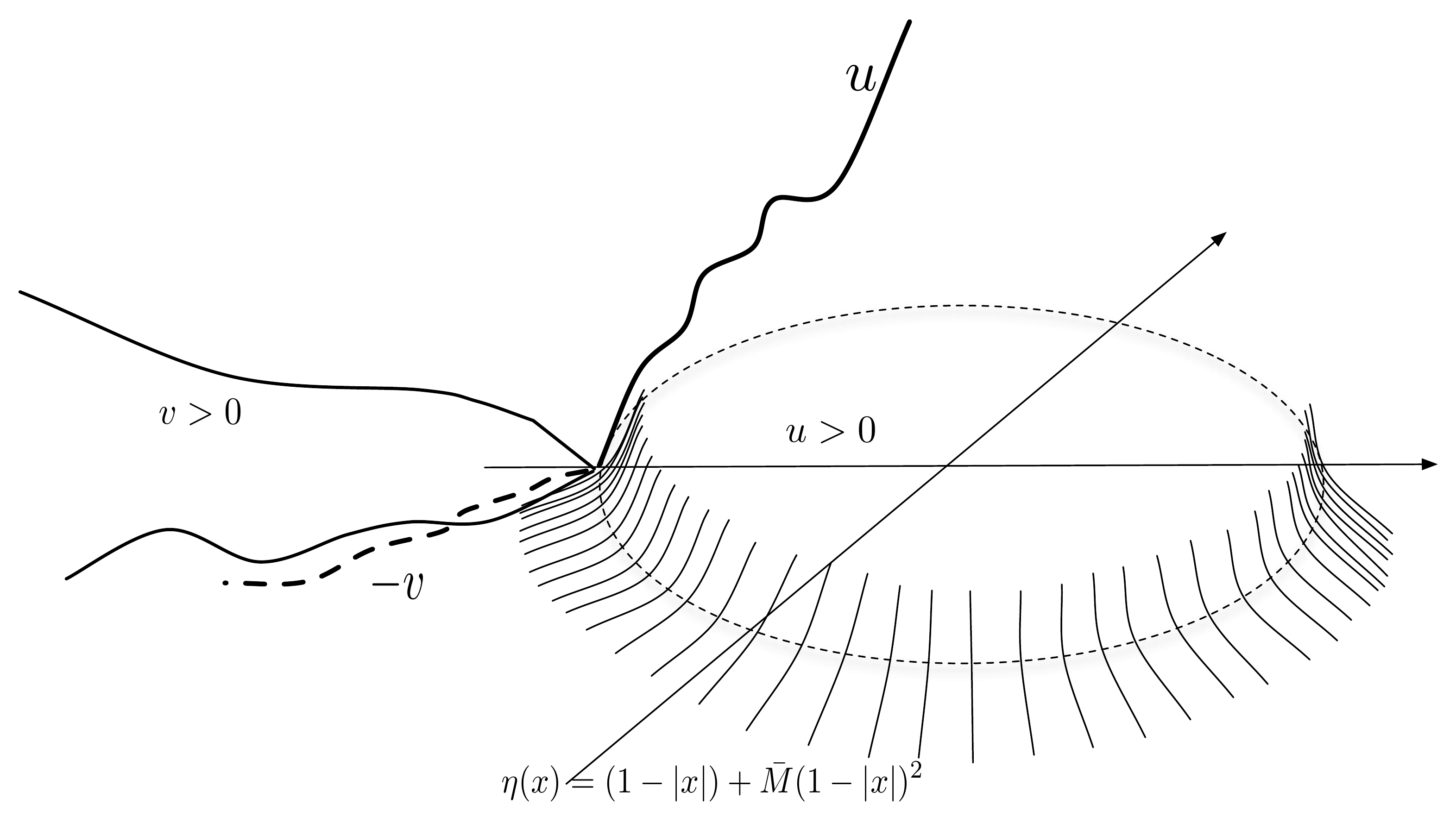}
\caption{Barrier function that touches $u-v$ from below at $z_{0}.$  }
\label{fig:final barrier for u-v}
\end{center}
\end{figure}

 \underline{First step:}
   By Lemma \ref{lemma: barrier sub} ($r=1, a=1, b=2$) there exist a subsolution  $\psi$ of $ \puccin$  on the ring $B_{1}(0)\backslash B_{\frac{1}{2}}(0)$ such that:

\begin{enumerate}
\item[(a)] $\psi (x) = 0 $ for  $x \in \partial B_{1}(0);$
\item[(b)] $\psi (x)= \frac{1}{C}M$ for $ x \in \partial B_{\frac{1}{2}}(0);$
\item[(c)] $\puccin (\psi ) \geq 0$ for $ x \in  B_{1}(0)\backslash  B_{\frac{1}{4}}(0);$
\item[(d)] $\dd{ \psi (x)}{\nu}= - \frac{\alpha}{2^{\alpha}-1}\frac{M}{C} $ for  $x \in \partial B_{1}(0),$ where $\nu$ is the outer normal direction.
\end{enumerate}

Due to the comparison principle applied in the ring $B_{1}(0) \backslash B_{\frac{1}{2}}(0)$ and by (\ref{bounds}) one can conclude that
 $$
 u (x) \geq \psi (x), \quad   x \in \partial B_{1}(0) \cup  \partial B_{\frac{1}{2}}(0)   \Rightarrow u(x) \geq \psi(x), \quad  x \in B_{1}(0) \backslash B_{\frac{1}{2}}(0),
 $$
and also, since $\psi$ is convex in the radial direction,  and by (d), 
\begin{equation}
\label{bound: surface for u}
u(x) \geq \underbrace{ \frac{\alpha}{2^{\alpha}-1}\frac{M}{C}}_{\overline{M}}  \dist(x, \partial B_{1}(0)) \qquad x \in B_{1}(0) \backslash B_{\frac{1}{2}}(0).
\end{equation}

\underline{Second Step}:
We do not know anything about the free boundary or the shape of the support of the other densities $v=\sum_{i\neq 1} \uind{i}$. But we know that if $z_{0}$ is the point on the free boundary closest to $0$  then  by the monotonicity formula 
\begin{eqnarray*}
\left( \frac{1}{\rho^{2}} \int_{B_{\rho}(z_{0})} \frac{\vert  \nabla u \vert^{2}}{\vert x- z_{0} \vert^{n-2}} \diff{x}\right) 
  \left( \frac{1}{\rho^{2}} \int_{B_{\rho}(z_{0})} \frac{\vert  \nabla  v \vert^{2}}{\vert x -z_{0}\vert^{n-2}}  \diff{x} \right) \leq N
\end{eqnarray*}
since the norm of the densities is bounded, say by the fixed constant $N$:
$$
 C(n)  \norm{(u,v)}{L^{2}(B_{1}(0))}^{4} \leq N.
$$

Our goal in this step is to prove that $v$ has to grow very slowly below   a linear function with small slope,  away from the free boundary. 

Let us consider the two possible cases around the point $z_{0}$:

\begin{enumerate}
\item[]\underline{Second Step a)}: For a sequence of small radii,  the measure of the intersection of the support of $v$ with each ball is almost zero: there is a sequence of radii $\rho_{k},$ $\rho_{k} \rightarrow 0,$  such that 
$$
\abs{\{ v \neq 0 \}\cap B_{\rho_{k}}(z_{0})} < \epsilon \abs{B_{\rho_{k}}(z_{0})},
$$
or,
\item[]\underline{Second Step b)}:  the measure of the support of $v$ contained in a ball with any radius is not small, say, if for any radius $\rho>0$
$$
\abs{\{ v \neq 0 \}\cap B_{\rho}(z_{0})}  > \epsilon \abs{B_{\rho}(z_{0})}.
$$
\end{enumerate}
The proof proceeds separately in each case  but in both cases we will prove (\ref{linear bound}).

\begin{enumerate}
\item[]\underline{ Proof of Second Step a)}:  If  there exists  a sequence of radius $(\rho_{k})_{k \in \mathbb{N}},$ $\rho_{k} \rightarrow 0,$ such that 
$$
\abs{\{ v \neq 0 \}\cap B_{\rho_{k}}(z_{0})} < \epsilon \abs{B_{\rho_{k}}(z_{0})},
$$
we can consider a subsequence of $(\rho_{k})_{k}$ such that 
$$... \rho_{k+1} < \frac{\rho_{k}}{2}\leq \rho_{k} < \frac{\rho_{k-1}}{2}\leq \rho_{k-1} \cdots \leq \rho_{1}\leq 1.$$
Since $v$ is bounded in all domain, we have that there exists $\tilde{N}_{1}$ such that
\begin{eqnarray*}
\sup_{x \in B_{\rho_{1}} (z_{0})} v(x) \leq \tilde{N_{1}} \rho_{1}.
\end{eqnarray*}
Then, since $v$  is subharmonic,  by  Proposition \ref{reference decay for ro}, we have that
\begin{eqnarray*}
\sup_{x \in B_{\frac{\rho_{1}}{2}}(z_{0})} v(x) \leq \epsilon\, \tilde{N}_{1} \rho_{1} 2^n.
\end{eqnarray*}
Then, since
$$
\puccip (v)\geq \puccin(v) \geq \sum_{i \neq 1} \puccin(\uind{i}) =0,
$$
by  Corollary \ref{wall of barriers normalized}  with $\tilde{U} = \epsilon\, \tilde{N}_{1} 2^{n+1},$   $\tilde{\sigma} =\frac{\rho_{1}}{2}$ and $t_{0}=1,$ we obtain that  there exists a universal constant $\tilde{C}:$
 $$
 v(x)\leq \tilde{C}\, \epsilon\, \tilde{N}_{1} \,2^{n+1} \dist(x, \partial B_{1}), 
 $$
 when $x \in S_{\frac{\rho_{1}}{2}}$ with $S_{\frac{\rho_{1}}{2}}:= \left( B_{1+\frac{\rho_{1}}{8}}(0)\backslash B_{1}(0)\right) \cap \{ x=(x',x_{n}): \abs{x'-z'_{0}}<\frac{\rho_{1}}{4} \}.$
 Let $\epsilon:=\epsilon_{0}$ be such that 
 \begin{eqnarray}
 \label{def epsilon}
 \epsilon_{0} \, \tilde{C}\,2^{n+1} \leq \frac{1}{2}.
 \end{eqnarray}
 Then, for $x \in S_{\frac{\rho_{1}}{2}}$
  $$
 v(x)\leq \frac{\tilde{N}_{1}}{2} \dist(x, \partial B_{1}). 
 $$
 Take as the next radii in the subsequence $\rho_{2}$ such that $B_{\rho_{2}}(z_{0}) \subset S_{\frac{\rho_{1}}{2}}.$
  Then, 
 \begin{eqnarray*}
\sup_{x \in B_{\rho_{2}} (z_{0})} v(x) \leq \frac{\tilde{N_{1}}}{2} \rho_{2}.
\end{eqnarray*}
 So we can repeat the process, and so, again by Proposition \ref{reference decay for ro}  with  $\tilde{N}_{2}=\frac{\tilde{N_{1}}}{2},$  we have that
\begin{eqnarray*}
\sup_{x \in B_{\frac{\rho_{2}}{2}}(z_{0})} v(x) \leq \epsilon_{0} \frac{\tilde{N}_{1}}{2} \rho_{2}\, 2^n.
\end{eqnarray*}
Then, by  Corollary \ref{wall of barriers normalized},  with $\tilde{U} = \epsilon_{0} \,\tilde{N_{1}} \,2^n,$   $\tilde{\sigma}=\frac{\rho_{2}}{2}$ and $t_{0}=1,$ and by (\ref{def epsilon}), 
 $$
 v(x)\leq \tilde{C} \epsilon_{0} \, \tilde{N_{1}} \,2^n \dist(x, \partial B_{1}) \leq \frac{\tilde{N_{1}}}{4}  \dist(x, \partial B_{1}), 
 $$
 when $x \in S_{\frac{\rho_{2}}{2}}$ with $S_{\frac{\rho_{2}}{2}}:= \left( B_{1+\frac{\rho_{2}}{8}}(0)\backslash B_{1}(0)\right) \cap \{ x=(x',x_{n}): \abs{x'-z'_{0}}<\frac{\rho_{2}}{4} \}.$
 
Again  take as the next radii in the subsequence $\rho_{3}$ such that $B_{\rho_{3}}(z_{0}) \subset S_{\frac{\rho_{2}}{2}}.$
  Then, 
 \begin{eqnarray*}
\sup_{x \in B_{\rho_{3}} (z_{0})} v(x) \leq \frac{\tilde{N_{1}}}{4} \rho_{3}.
\end{eqnarray*}
Repeating a finite number of times this process, we have that, for $x \in S_{\frac{\rho_{k}}{2}}$
 $$
 v(x)\leq  \frac{\tilde{N_{1}}}{2^k}  \dist(x, \partial B_{1}).
 $$
Therefore,  it is possible to find $\rho_{l}$ such that 
$ \frac{\tilde{N}_{1}}{2^{l}} \leq C \frac{1}{\overline{M}} $, and so
\begin{eqnarray*}
 v(x)\leq C\frac{1}{\overline{M}} \underbrace{ d(x, \partial B_{1}(0))}_{\abs{x}-1} \qquad  x \in \mathrm{S_{\frac{\rho_{l}}{2}}}.
 \end{eqnarray*}
 In particular,
 \begin{eqnarray}
\label{linear bound a)}
 v(x)\leq C\frac{1}{\overline{M}} \underbrace{ d(x, \partial B_{1}(0))}_{\abs{x}-1} \qquad  x \in \mathrm{B_{\frac{\rho_{l}}{8}}(z_{0})}.
 \end{eqnarray}

\item[]\underline{Proof of Second Step b)}:   If for any $\rho > 0$ as small  as we want
 \[
 \abs{\{ v \neq 0 \}\cap B_{\rho}(z_{0}) }  > \epsilon \abs{B_{\rho}(z_{0})}. \]
Since   
\[
\abs{\{ v \neq 0 \}\cap B_{\rho}(z_{0})}=  \abs{\{ u = 0 \}\cap B_{\rho}(z_{0})} > \epsilon \abs{B_{\rho}(z_{0})}, \] 
we can apply Poincaré-Sobolev inequality to the function $u$.  Also, by construction 
$$\abs{\{ u \neq 0 \}}=  \abs{\{ v = 0 \}} \geq \abs{B_{\rho}(z_{0})\cap B_{1}(0)} > \epsilon \abs{B_{\rho}(z_{0})},  $$ 
so we can also apply the Poincaré-Sobolev inequality to $v$.
Hence,
\begin{eqnarray*}
 \int_{B_{\rho}(z_{0})} v^{2}(x) \diff{x} \leq C(n, \epsilon) \rho^{2} \int_{B_{\rho}(z_{0})} \vert  \nabla v(x)   \vert^{2} \diff{x},
\end{eqnarray*}
\begin{eqnarray*}
 \int_{B_{\rho}(z_{0})} u^{2}(x) \diff{x} \leq C(n, \epsilon)  \rho^{2} \int_{B_{\rho}(z_{0})} \vert  \nabla u(x)   \vert^{2} \diff{x}.
\end{eqnarray*}
Then,
\begin{eqnarray*}
 \int_{B_{\rho}(z_{0})} v^{2} (x) \diff{x} \leq C(n, \epsilon)  \rho^{2} \int_{B_{\rho}(z_{0})} \vert \nabla v (x)   \vert^{2} \frac{\rho^{n-2} }{\vert x-z_{0}\vert^{n-2}}\diff{x},
\end{eqnarray*}
\begin{eqnarray*}
 \int_{B_{\rho}(z_{0})} u^{2} (x) \diff{x} \leq C(n, \epsilon)  \rho^{2}  \int_{B_{\rho}(z_{0})}\vert \nabla u (x)   \vert^{2} \frac{ \rho^{n-2} }{\vert x-z_{0}\vert^{n-2}}\diff{x},
\end{eqnarray*}
and 
\begin{eqnarray*}
 \left( \int_{B_{\rho}(z_{0})} v^{2} (x) \diff{x} \right)\left(  \int_{B_{\rho}(z_{0})} u^{2} (x) \diff{x} \right) \leq C(n, \epsilon) \,\rho^{2n-4}\rho^{8}\, N.
\end{eqnarray*}
\end{enumerate}
But we also know that $u$ is controlled from below by the barrier function from the First Step, so if $A = B_{1}(0)\backslash B_{\frac{1}{2}}(0)$
\begin{eqnarray*}
 \int_{B_{\rho}(z_{0})\cap A } \psi^{2} (x) \diff{x}\leq  \int_{B_{\rho}(z_{0})\cap A } u^{2} (x) \diff{x}
   \leq \int_{B_{\rho}(z_{0})} u^{2} (x) \diff{x}.
\end{eqnarray*}

To estimate the integral of the barrier function we can use a linear function defined all over $\partial B_{1}(0)$ by the slope evaluated  at $z_{0}.$

As in the first step, take $\overline{M} =  \frac{\alpha}{2^{\alpha}-1}\frac{M}{C} $ to be the absolute value of the slope of the barrier function. Note that  $M $ is arbitrarily big and so $\frac{1}{\overline{M}}$ will be arbitrarily small.
Assume that the volume of the cone defined by the barrier function is bounded from below by $ c_{1}\rho^{2} \rho^{n} \omega_{n}\frac{1}{n} $  where $c_{1}$ depends on the area of the base of the cone, then, 
\begin{eqnarray*}
\frac{ c_{1} {\overline{M}}^{2}\rho^{2} \rho^{n} \omega_{n} }{n}\leq \int_{B_{\rho}(z_{0})\cap A } \psi^{2} (x) \diff{x}
\end{eqnarray*}
therefore,
 \begin{eqnarray*}
 \left( \int_{B_{\rho}(z_{0})} v^{2} (x) \diff{x} \right)\left( \frac{ c_{1} {\overline{M}}^{2}\rho^{2} \rho^{n} \omega_{n} }{n} \right) \leq  C(n, \epsilon) \,\rho^{2n-4} \rho^{8}N,
\end{eqnarray*}
so
\begin{eqnarray*}
\left( \int_{B_{\rho}(z_{0})} v^{2} (x) \diff{x} \right) \leq \frac{ C(n, \epsilon) \,\rho^{2n-4} \rho^{8} N}{c_{1} {\overline{M}}^{2}\rho^{2} \rho^{n} \omega_{n} } =  C( \alpha, n, c_{1}, \epsilon) \rho^{n+2}\frac{  N}{ {\overline{M}}^{2}}.
\end{eqnarray*}

Let $y \in B_{\frac{7 \rho}{8}}(z_{0})$ such that $B_{\frac{\rho}{16}}(y) \subset B_{\rho}(z_{0}).$ Due to the subharmonicity and positivity of $v$
\begin{eqnarray*}
v (y) \leq \fint_{B_{\frac{\rho}{16}}(y)} v (x)\diff{x} \leq \left( \frac{\rho}{\frac{\rho}{16}}\right)^{n} \fint_{B_{\rho}(z_{0})} v(x)\diff{x}. 
\end{eqnarray*}
On the other hand, the  \holder inequality yields,
\begin{eqnarray*}
\left(
\fint_{B_{\rho}(z_{0})} v(x)\diff{x} 
\right)^{2}
\leq
 \fint_{B_{\rho}(z_{0})}v (x)   ^{2} \diff{x}.
\end{eqnarray*}
All  together, for any $y \in B_{\frac{7 \rho}{8}}(z_{0})$, we have,
\begin{eqnarray*}
v^{2} (y) \leq 16^{2n}  \fint_{B_{\rho}(z_{0})}v (x)   ^{2} \diff{x} =16^{2n}  \frac{1}{\rho^{n} \omega_{n}}   \int_{B_{\rho}(z_{0})}v (x)   ^{2} \diff{x},
\end{eqnarray*}
and hence
\begin{eqnarray*}
v^{2} (y) \leq \frac{ 16^{2n }}{\rho^{n} \omega_{n}}  C( \alpha, n , c_{1}, \epsilon) \rho^{n+2}\frac{  N}{ {\overline{M}}^{2}}= \underbrace{16^{2n}\, C( \alpha, n, c_{1}, \epsilon)}_{c_{2}^{2}}  \rho^{2}  \frac{  N}{   {\overline{M}}^{2}}.
\end{eqnarray*}
 We conclude that  $v(y)\leq c_{2} \frac{\sqrt{N}}{\overline{M}} \frac{7 \rho}{8},$ for  $y \in B_{\frac{7 \rho}{8}}(z_{0})$. Note that doing the same type of argument for $r=\rho t,$ $0<t<1$  we can conclude that 
 \begin{equation}
 \label{eq: upper bound v}
 v(y)\leq c_{2} \sqrt{N}\frac{1}{\overline{M}} \abs{y-z_{0}}.
 \end{equation}
 Since, $v$ a subsolution of the positive extremal Pucci operator
$$
\puccip (v)\geq \puccin(v) \geq \sum_{i \neq 1} \puccin(\uind{i}) =0,
$$
and we have by (\ref{eq: upper bound v}) that
  \begin{eqnarray*}
 v(y)\leq c_{2} \sqrt{N}\frac{1}{\overline{M}} \frac{\rho}{2}, \qquad x \in B_{\frac{\rho}{2}}(z_{0}),
 \end{eqnarray*}
 by Corollary \ref{wall of barriers normalized}, with $\tilde{U}= c_{2} \frac{ \sqrt{N}}{\overline{M}}$,  $\tilde{\sigma}=\frac{\rho}{2},$ and $t_{0}=1, $ we obtain that
 \begin{eqnarray*}
 v(y) \leq \tilde{C} \frac{ c_{2} \sqrt{N}}{\overline{M}} \dist(y, \partial B_{1}(0)), \quad \mbox{when} \quad y \in S_{\frac{\rho}{2}}.
\end{eqnarray*}
As before, $S_{\frac{\rho}{2}}$ is the portion of an annulus around $B_{1}(0)$ contained in $B_{\frac{\rho}{2}}(z_{0}), $
$$S_{\frac{\rho}{2}}=\left(B_{1+\frac{\rho}{8}}(0)\backslash B_{1}(0)\right)\cap \{(x',x_{n}) \in \rr^{n}: \abs{x'-z_{0}'}<\frac{\rho}{4}\}.$$ 
More explicitly,  we have that for $y \in B_{\frac{\rho}{8}}(z_{0}), $
\begin{equation}
\label{bound: surface for v}
v(y) \leq \underbrace{c_{2} \sqrt{N}\frac{8}{5} \frac{\alpha}{\frac{1}{5}-\left(\frac{1}{5}\right)^{\alpha+1}}}_{C} \frac{1}{\overline{M}} \underbrace{\dist (y, \partial B_{1}(0))}_{\abs{y}-1}.
\end{equation}

 \underline{Third step:}
Consider $\rho $  the radii in Second Step a). Consider $\epsilon$ to be $\epsilon_{0}$ in Second Step a), and $C$ in (\ref{linear bound a)}) to be 
\begin{eqnarray*}
C=c_{2} \sqrt{N}\frac{8}{5} \frac{\alpha}{\frac{1}{5}-\left(\frac{1}{5}\right)^{\alpha+1}}.
\end{eqnarray*}
Putting together (\ref{bound: surface for u}), (\ref{linear bound a)}) and (\ref{bound: surface for v}), if $A= B_{1}(0) \cap B_{\delta}(z_{0})$ and $B= S \cap B_{\delta}(z_{0}) $
$$
u(x) \geq \overline{M}(1-\abs{x}) \quad x \in A \qquad \mbox{and} \qquad -v(x)\geq - \frac{C}{\overline{M}}(\abs{x}-1) \quad x \in B.
$$

So letting $\eta$ be the radial symmetric smooth function defined by (see Figure \ref{fig:final barrier for u-v})
$$
\eta(x) = (1-\abs{x}) + \overline{M}(1-\abs{x})^{2},
$$
we can conclude that 
$$
(u-v)(x) \geq \eta (x) \qquad x \in B_{\delta}(z_{0})
$$
and that 
$$
(u-v)(z_{0}) = \eta (z_{0}).
$$
But notice that the Hessian of $\eta$ is given by
$$
H(\eta)(x)=
\left[
\begin{array}{ccccc}
 2 \overline{M}&0&0&\cdots&0\\
0&\frac{-1 + 2\overline{M}(r-1)}{r}&0&\cdots&0\\
\vdots&\vdots&\vdots&\vdots&\vdots\\
0&\cdots&0&\frac{-1 + 2\overline{M}(r-1)}{r}&0\\
0&0&\cdots&0 &\frac{-1 + 2\overline{M}(r-1)}{r}\\
\end{array}
\right]
$$
(where $r=\abs{x}$) and so at 
$z_{0}$ because $r=1$ we obtain
$\puccin(\eta (z_{0}))= \lambda 2 \overline{M} - \Lambda (n-1)> 0,$
 since $\overline{M}$ is as big as we wish.  But this contradicts the fact that $u-v$ is a supersolution of \puccin and this finishes the proof of (1).

(2) Arguing in a similar way as in Proposition 4.13, in \cite{caffarelli_fully_1995}, we should consider:

\begin{enumerate}
\item[a)] When $2 \abs{x-y}< h$, $h=\max (\dist(x, \partial \supp \uind{1}),\dist(y, \partial \supp \uind{1})),$ $y \in B_{\frac{h}{2}}(x)$ so by interior regularity (Proposition \ref{prop: interior holder} with $f=0$) for properly scaled balls gives that 
\[
 \abs{\uind{1}(x)-\uind{1}(y)}\leq C \norm{ \uind{1}}{L^{\infty}(B_{1}(0))} \abs{x-y};
 \]

\item[b)] When $2 \abs{x-y} \geq  h$  note that for $x$ in the support of $\uind{1}$ and $\overline{x}$ is the closest point to $x$  on the free boundary we have from the previous result that
$$
 \abs{\uind{1}(x)-\uind{1}(\overline{x})} \leq \abs{\uind{1}(x)}\leq Ch.
$$
And so, by adding and subtracting $\uind{1}(\overline{x})$ and $\uind{1}(\overline{y})$, where $\overline{y}$ is the closest point to $y$  on the free boundary,  and by triangular inequality,
\begin{eqnarray*}
 \frac{\abs{\uind{1}(x)-\uind{1}(y)} }{\abs{x-y}}
 &\leq &\frac{\abs{\uind{1}(x)-\uind{1}(\overline{x})}}{\abs{x-y}} 
 +\frac{\abs{\uind{1}(\overline{x})-\uind{1}(\overline{y})}}{\abs{x-y}}
 +\frac{\abs{\uind{1}(\overline{y})-\uind{1}(y)}}{\abs{x-y}}\\
& \leq&
\frac{2\abs{\uind{1}(x)}}{h} 
 +\frac{2 \abs{\uind{1}(y)}}{h}\leq 4C.
\end{eqnarray*}
\end{enumerate}
Thus,
\begin{eqnarray*}
\Vert \uind{1} \Vert_{Lip(B_{\frac{1}{4}}(x_{0}))} = \norm{ u_{1}}{ C^{0,1}(B_{\frac{1}{4}}(x_{0}))}\leq \tilde{C},
\end{eqnarray*}
and  the result follows.
\end{proof}

\textbf{Acknowledgements: } To my professor Luis Caffarelli for introducing  me to this problem and guide me through all the work presented here. To my professor Diogo Gomes  and to my colleagues and friends Alessio Figalli, Betul Orcan, Diego Farias, Emanuel Indrei, Farid Bozorgnia, Fernando Charro, Nestor Guillen and Ray Yang, for all their suggestions and encouragement.

\section*{Appendices}

\appendix{}
 
\section{Pucci Operators. General properties }
\label{general pucci}

For the sake of completeness, we recall some properties of the Pucci operators in the following:
$$
  \puccin  (u) =  \inf_{A \in \mathcal{A}_{\lambda, \Lambda}}    a_{ij}D_{ij}(u)= \inf_{A \in \mathcal{A}_{\lambda, \Lambda}} \tr (A D^{2} u) =\Lambda \sum_{  e_{i}<0 } e_{i}  + \lambda \sum_{ e_{i}>0 } e_{i}\\
 $$   
    where:
 \begin{itemize}
    \item[]  $ \mathcal{A}_{\lambda, \Lambda}$ is the set of  symmetric matrices $(n \times n)$ with with eigenvalues   in $[\lambda, \Lambda]$ for $0<\lambda<\Lambda $;
    \item[] $e_{i} $ eigenvalue of the matrix  $D^{2} u$;
    \end{itemize}

\begin{remark}
  \item[] Observe that $\inf_{A \in \mathcal{A}_{\lambda, \Lambda}}    a_{ij}D_{ij}(u) \leq  a_{ij}D_{ij}(u) \leq  \sup_{A \in \mathcal{A}_{\lambda, \Lambda}}    a_{ij}D_{ij}(u)$.
  
  \item[]   Let $u, v $ be  smooth functions and $0<\lambda<1<\Lambda :$ 
\begin{itemize}
\item[-] $\puccin (u) \leq  \Lambda \D u \leq  \puccip (u)$  ;
\item[-] $\puccin (-u) = - \puccip(u)$;
\item[-] $\puccin(u)+\puccin (v) \leq \puccin(u+v) \leq \puccip(u) + \puccin (v)$ and so  $\puccin$ is concave;
\item[-] $\puccip(u)+\puccin (v) \leq \puccip(u+v) \leq \puccip(u) + \puccip (v)$ and so $\puccip$ is convex ;
\item[-] $0 \leq \puccin (\ueind{i}) \leq  \Lambda \D \ueind{i} \leq  \puccip (\ueind{i}) \Rightarrow  \: \ueind{i} \: \mbox{subharmonic in the viscosity sense}$  ;
\item[-] $ 0 \leq \sum_{i} \puccin (\ueind{i}) \leq \puccin (\sum_{i} \ueind{i}) \leq  \D \left(  \sum_{i} \ueind{i} \right) \Rightarrow  \:  \sum_{i} \ueind{i} \: \mbox{subharmonic in the viscosity sense}$;
\end{itemize}

\end{remark}

\section{Fabes and Strook Inequality}
\label{app: F S inequality}

Here we state without proof part of Theorem 2 in \cite{fabes_lp_1984}. For more details see also Appendix B  in \cite{caffarelli_homogenization_2005}.

\begin{lemma}
\label{lemma: F S ineq}
Let $G(x,y)$ denote the Green's function for a linear operator L with measurable coefficients $ Lu=a_{ij}(x)D_{ij} u$, $\lbrack a_{ij}(x)\rbrack \in  \mathcal{A}_{\lambda, \Lambda}$. Then, there exist universal constants  $C$ and $\beta$ such that whenever $E \subset B_{r},$ and $B_{r} \subset B_{\frac{1}{2}}$ the following holds:
\begin{eqnarray*}
\left\lbrack \frac{\abs{E}}{ \abs{B_{r}}}\right\rbrack^{\beta} \int_{B_{r}} G(x,y)\dy \leq C \int_{E} G(x,y) \dy \qquad \mbox{for all $x \in  B_{1}$.}
\end{eqnarray*}
\end{lemma}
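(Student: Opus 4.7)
The strategy is to show that the measure $d\mu_x(y):=G(x,y)\,dy$ is, uniformly in $x\in B_1$, a Muckenhoupt $A_\infty$ weight with respect to Lebesgue measure on balls contained in $B_{1/2}$. The claimed density inequality is then exactly one of the standard equivalent formulations of the quantitative $A_\infty$ condition, with $\beta$ being the $A_\infty$ exponent.

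I would split the argument into two regimes according to whether the pole $x$ is far from $B_r$ or close to it. In the far-pole regime $\mathrm{dist}(x,B_r)\geq 4r$, the function $G(x,\cdot)$ is a nonnegative solution of the adjoint equation $L^{\ast}w=0$ on $B_{2r}$, and the adjoint Krylov--Safonov Harnack inequality (obtained via the normalized-adjoint-solution device central to the Fabes--Stroock paper) gives $\sup_{B_r}G(x,\cdot)\leq C_0\inf_{B_r}G(x,\cdot)$ with $C_0$ depending only on $n,\lambda,\Lambda$. This immediately implies
\[
\int_{B_r}G(x,y)\,dy\leq C_0\,|B_r|\inf_{B_r}G(x,\cdot)\leq C_0\frac{|B_r|}{|E|}\int_E G(x,y)\,dy,
\]
which is stronger than the claimed inequality with $\beta=1$.

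For the close-pole regime $\mathrm{dist}(x,B_r)<4r$, I would use the two-sided pointwise bound $c_1|x-y|^{2-n}\leq G(x,y)\leq c_2|x-y|^{2-n}$ on a universal neighborhood of $x$, established by Fabes and Stroock through Nash-type parabolic Gaussian estimates for $\partial_t-L$ followed by integration in time. This comparison reduces the inequality to the corresponding one for the Riesz kernel measure $|x-y|^{2-n}\,dy$, which is a classical $A_1$ (hence $A_\infty$) weight with universal constants, so the estimate transfers to $d\mu_x$ with $\beta$ equal to the $A_\infty$ exponent of the Riesz kernel. Combining the two regimes (which overlap, so $C$ can be chosen to be the maximum of the two constants and $\beta$ the minimum of the two exponents) gives the lemma.

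The main obstacle is the close-pole two-sided Aronson bound: in the non-divergence setting with merely measurable coefficients, the divergence-form Caccioppoli/Moser technology is unavailable, so one must work instead with the parabolic fundamental solution of $\partial_t-L$, where Nash's entropy method and a Moser-style iteration adapted to non-divergence operators do go through, and only then produce the elliptic Green's function by time integration. An alternative route, and the one closer in spirit to Fabes--Stroock's original argument, bypasses pointwise bounds entirely and derives the $A_\infty$ property directly from the ABP estimate applied to the function $v(x):=\int_E G(x,y)\,dy$ combined with a Calder\'on--Zygmund stopping-time decomposition at the level set $\{v>\alpha\,\int_{B_r}G(x,\cdot)\}$; iteration of the resulting good-$\lambda$ inequality yields the power-law estimate with universal $\beta$.
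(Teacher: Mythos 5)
The paper itself gives no proof of this lemma: it is quoted verbatim from Theorem 2 of Fabes--Stroock (with a pointer to the appendix of the cited homogenization paper), so the only thing to assess is whether your sketch would actually prove it. As written, your main argument does not, because both of its pointwise ingredients fail for non-divergence operators with merely measurable coefficients. In the far-pole regime you invoke an ``adjoint Krylov--Safonov Harnack inequality'' to get $\sup_{B_r}G(x,\cdot)\le C_0\inf_{B_r}G(x,\cdot)$. But $G(x,\cdot)$ solves the adjoint equation $D_{ij}(a_{ij}w)=0$, and Krylov--Safonov applies to solutions of $a_{ij}D_{ij}u=0$, not to adjoint solutions: adjoint solutions need not be locally bounded, and no pointwise Harnack inequality holds for them in this generality. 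The known substitutes (Bauman's comparison theorem, the Escauriaza--Fabes theory of \emph{normalized} adjoint solutions, doubling of the measure $G(x,y)\,dy$) are statements at the level of averages --- essentially the very $A_\infty$ property you are trying to prove --- so appealing to them here would be circular, while the pointwise $\sup\le C\inf$ version you use is simply unavailable.

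The close-pole step has the same problem in stronger form: the two-sided bound $c_1|x-y|^{2-n}\le G(x,y)\le c_2|x-y|^{2-n}$ is not proved by Fabes--Stroock and is false for measurable coefficients. Aronson/Nash Gaussian bounds are a divergence-form phenomenon; for $\partial_t-a_{ij}(x)D_{ij}$ with measurable coefficients only integral bounds on the fundamental solution survive, and correspondingly Fabes--Stroock obtain only $L^{n/(n-1)}$-type reverse H\"older integrability of $G$, which is essentially sharp --- there are uniformly elliptic operators whose Green functions fail the Riesz-kernel comparison off the diagonal. So the reduction to the $A_1$ property of $|x-y|^{2-n}$ collapses. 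The one-sentence ``alternative route'' you mention at the end is in fact the correct (and essentially the original) argument: set $v(x)=\int_E G(x,y)\,dy$ and $w(x)=\int_{B_r}G(x,y)\,dy$, note that (up to sign conventions) $Lv=-\chi_E$ and $Lw=-\chi_{B_r}$, and run the ABP estimate together with a Krylov--Safonov-type growth/critical-density lemma and a Calder\'on--Zygmund iteration to show $v\ge c\,(|E|/|B_r|)^{1/\epsilon}\,w$ on $B_1$, which is the stated inequality with $\beta=1/\epsilon$. That is where the power $\beta$ really comes from, but in your proposal it remains an undeveloped aside rather than the proof.
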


\section{Properties of subharmonic functions in thin domains.}
\label{apd for lips: subhar}

\begin{lemma}[$L^{\infty}$ decay for subharmonic functions supported in small domains]
\label{lemma: decay}
Let $u$ be a non-negative subharmonic function in a domain that contains $B_{1}(0).$ If
for some small $\epsilon_{0}>0,$
$$
\sup_{B_{1} (0)} u \leq 1 \quad \mbox{ and} \quad \frac{ \abs{\{u\neq 0\} \cap B_{1}(0)}}{\abs{B_{1}(0)}} \leq \epsilon_{0}
$$  
then, 
$$
\sup_{B_{\frac{1}{2}}(0)} u \leq \epsilon_{0}\, 2^n.
$$
\end{lemma}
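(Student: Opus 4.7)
The plan is to use the standard sub-mean-value inequality for subharmonic functions. Since $u$ is subharmonic and non-negative, for every $x \in B_{1/2}(0)$ the ball $B_{1/2}(x)$ is contained in $B_1(0)$, so
$$ u(x) \;\leq\; \fint_{B_{1/2}(x)} u(y)\,dy \;=\; \frac{1}{|B_{1/2}(x)|}\int_{B_{1/2}(x)} u(y)\,dy. $$

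Next I would use both hypotheses together: the integrand is bounded by $1$ pointwise (since $\sup_{B_1(0)} u \leq 1$), and the integrand vanishes off $\{u\neq 0\}\cap B_1(0)$, a set of measure at most $\epsilon_0\,|B_1(0)|$. Estimating trivially,
$$ \int_{B_{1/2}(x)} u(y)\,dy \;\leq\; |\{u\neq 0\}\cap B_1(0)| \;\leq\; \epsilon_0\,|B_1(0)|. $$
Dividing by $|B_{1/2}(x)| = 2^{-n}|B_1(0)|$ gives $u(x) \leq \epsilon_0\,2^n$, and taking the sup over $x\in B_{1/2}(0)$ yields the result.

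There is no real obstacle here; the one point to be careful about is the justification of the sub-mean-value inequality in the viscosity sense used throughout the paper. This is already addressed by the remark near the beginning of Section \ref{sec: main results}: a continuous subharmonic function in the viscosity sense is subharmonic in the distributional sense, and for non-negative distributionally subharmonic functions the sub-mean-value inequality on balls compactly contained in the domain holds (one can argue by mollification, obtaining smooth subharmonic approximants for which the mean value inequality is classical, and passing to the limit using the uniform $L^1$ bound on compact subsets). Thus the smallness parameter $\epsilon_0$ need not be taken very small; the statement holds for any $\epsilon_0 \in (0,1)$.
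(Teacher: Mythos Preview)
Your proof is correct and is essentially identical to the paper's own argument: both use the sub-mean-value inequality on $B_{1/2}(x)\subset B_1(0)$, bound the integral by $|\{u\neq 0\}\cap B_1(0)|\cdot\sup_{B_1}u$, and divide by $|B_{1/2}|=2^{-n}|B_1|$. Your additional remark on justifying the mean-value inequality in the viscosity/distributional setting is a welcome clarification that the paper leaves implicit.
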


\begin{proof}
Let $y$ be an arbitrary point in the ball $B_{\frac{1}{2}}(0). $ Due to the subharmonicity and the fact that $u$ is a non-negative function 
\begin{eqnarray*}
\begin{split}
u(y) &\leq \fint_{B_{\frac{1}{2}}(y)} u(x) \diff{x}
 \leq \frac{1}{\abs{B_{\frac{1}{2}}(y)}} \int_{B_{1}(0)} u(x) \diff{x} \leq \frac{\abs{B_{1}(0)\cap \supp \, u }}{\abs{B_{\frac{1}{2}}(y)}} \sup_{B_{1}(0)} u(x)
\end{split}
\end{eqnarray*}
and so by hypotheses, 
\begin{eqnarray*}
u(y) \leq \frac{ \epsilon_{0}\, \omega_{n}}{\left(\frac{1}{2}\right)^{n}\omega_{n}}
  =  \epsilon_{0}\, 2^n
\end{eqnarray*}
which gives the result.
\end{proof}

\begin{prop}
\label{reference decay for ro}
Let $u$ be a non-negative subharmonic function in a domain that contains $B_{1}(0).$ If
for some $\rho \leq 1$ and for some constants $N,\epsilon_{0} \geq 0$ 
\begin{eqnarray*}
\sup_{x \in B_{\rho} (0)} u(x) \leq N \rho \quad \mbox{ and} \quad \frac{ \abs{\{u\neq 0\} \cap B_{\rho}(0)}}{\abs{B_{\rho}(0)}} \leq \epsilon_{0},
\end{eqnarray*}
then, 
\begin{eqnarray*}
\sup_{x \in B_{\frac{\rho}{2}}(0)} u(x) \leq  N \,\rho \, \epsilon_{0}\, 2^n.
\end{eqnarray*}
\end{prop}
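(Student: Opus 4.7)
The plan is to reduce Proposition \ref{reference decay for ro} to the already-established Lemma \ref{lemma: decay} via a rescaling argument, since the statement is exactly the scale-invariant form of that lemma. I would introduce the normalized function
$$
v(x) := \frac{1}{N\rho}\, u(\rho x), \qquad x \in B_{1}(0),
$$
and verify that it satisfies every hypothesis of Lemma \ref{lemma: decay}.

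First, $v$ is non-negative because $u$ and the multiplicative constant $\frac{1}{N\rho}$ are non-negative. Subharmonicity is preserved under the affine change of variables $x \mapsto \rho x$: a direct computation gives $\Delta v(x) = \frac{\rho}{N}\, \Delta u(\rho x) \geq 0$ in the distributional (or viscosity) sense on $B_{1}(0)$. The bound $\sup_{B_{\rho}(0)} u \leq N\rho$ immediately yields $\sup_{B_{1}(0)} v \leq 1$. Finally, the dilation $x \mapsto \rho x$ preserves volume ratios, so
$$
\frac{|\{v \neq 0\} \cap B_{1}(0)|}{|B_{1}(0)|} \;=\; \frac{|\{u \neq 0\} \cap B_{\rho}(0)|}{|B_{\rho}(0)|} \;\leq\; \epsilon_{0},
$$
transferring the density hypothesis from $u$ to $v$.

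Applying Lemma \ref{lemma: decay} to $v$ gives $\sup_{B_{1/2}(0)} v \leq \epsilon_{0}\, 2^{n}$. Unwinding the definition of $v$ produces
$$
\sup_{B_{\rho/2}(0)} u \;=\; N\rho \sup_{B_{1/2}(0)} v \;\leq\; N\rho\, \epsilon_{0}\, 2^{n},
$$
which is exactly the claimed inequality. No serious obstacle is anticipated, as the argument is a one-step rescaling; the only point needing any care is the preservation of subharmonicity under the rescaling, which is handled by the direct computation above.
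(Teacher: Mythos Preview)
Your proposal is correct and follows essentially the same approach as the paper: introduce the rescaled function $v(x)=\frac{1}{N\rho}u(\rho x)$ on $B_1(0)$, verify it satisfies the hypotheses of Lemma \ref{lemma: decay}, apply that lemma, and unwind the scaling. If anything, you are slightly more explicit than the paper in checking that subharmonicity and the volume-fraction hypothesis survive the rescaling.
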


\begin{proof}
Consider the function $v$ defined on $B_{1}(0)$ by 
$$v(x)= \frac{1}{N \rho}u( \rho\,x) .$$
The new function $v$ satisfies
$$
\sup_{x \in B_{1}(0)} v (x) \leq 1,
$$ 
and so by Lemma \ref{lemma: decay}, we have that
$$
\sup_{x \in B_{\frac{1}{2}}(0)} v(x) \leq  \epsilon_{0}\, 2^n.
$$
Substituting $v$ by its definition in terms of $u$ gives that, 
\begin{eqnarray*}
\sup_{x \in B_{\frac{1}{2}}(0)}  \frac{1}{N \rho}u( \rho\,x) \leq \epsilon_{0}\, 2^n 
\Leftrightarrow 
\sup_{y \in B_{\frac{\rho}{2}}(0)}   u(y) \leq N \,\rho\, \epsilon_{0}\, 2^n
\end{eqnarray*}
which gives the final result.
\end{proof}

\section{Monotonicity formula}
\label{apd for lips: monotonicity}

\begin{lemma}
Let $u,$ $v$ be two \holder continuous functions defined on $B_{1}(0)$, nonnegative,  subharmonic when positive and with disjoints supports. Assume that $0$ belongs to the intersection of the boundary of their support. 
 Let $z_{0}$ be the point in the free boundary close to $0.$ Then, the following quantity is increasing with the radius $\rho$ and uniformly bounded,
\begin{eqnarray*}
\left( \frac{1}{\rho^{2}} \int_{B_{\rho}(z_{0})} \frac{\vert  \nabla u \vert^{2}}{\vert x- z_{0} \vert^{n-2}} \diff{x}\right) 
  \left( \frac{1}{\rho^{2}} \int_{B_{\rho}(z_{0})} \frac{\vert  \nabla  v \vert^{2}}{\vert x -z_{0}\vert^{n-2}}  \diff{x} \right) \leq N,
\end{eqnarray*}
since the norm of the densities is bounded, say by the fixed constant $N$;
$$
 C(n)  \norm{(u,v)}{L^{2}(B_{1}(0))}^{4} \leq N.
$$
\end{lemma}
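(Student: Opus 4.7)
This is the Alt--Caffarelli--Friedman monotonicity formula, and I will follow its classical proof. Translating so that $z_{0}=0$, set
$$
J(w,\rho) := \frac{1}{\rho^{2}} \int_{B_{\rho}(0)} \frac{|\nabla w|^{2}}{|x|^{n-2}} \, dx, \qquad \Phi(\rho):= J(u,\rho)\, J(v,\rho),
$$
so the claim is that $\Phi$ is nondecreasing on $(0,r_{0}]$ and bounded by $C(n)\|(u,v)\|_{L^{2}(B_{1})}^{4}$. Monotonicity will follow from showing $\rho\,\frac{d}{d\rho}\log\Phi(\rho)\geq 0$ a.e., after which the uniform bound will come from evaluating $\Phi$ at a convenient $\rho$ close to $1$.

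For the monotonicity, first I would differentiate $\log J(w,\rho)$ via the coarea formula, obtaining a quotient of a boundary integral over a bulk integral. The bulk integral is then controlled from above by a Rellich-type integration by parts: since $w\geq 0$ and $\Delta w\geq 0$ in the sense of distributions (as $w$ is subharmonic where positive, vanishes elsewhere, and, by the disjoint-supports assumption, carries no negative mass across the free boundary), and since $|x|^{2-n}$ is harmonic away from $0$, one obtains
$$
\int_{B_{\rho}}\frac{|\nabla w|^{2}}{|x|^{n-2}}\, dx \leq \rho^{2-n}\int_{\partial B_{\rho}} w\, w_{\nu}\, dS.
$$
Splitting $|\nabla w|^{2}=|\nabla_{\theta} w|^{2}+ w_{\nu}^{2}$ on the sphere and applying Cauchy--Schwarz, the resulting lower bound on $\rho\,\frac{d}{d\rho}\log J(w,\rho)$ reduces, after scaling to the unit sphere, to the characteristic constant $\alpha(\omega_{w}(\rho))$ of the spherical trace $\omega_{w}(\rho):=\{w>0\}\cap\partial B_{\rho}$, i.e.\ the positive root of $\alpha(\alpha+n-2)=\lambda_{1}(\omega_{w}(\rho))$, where $\lambda_{1}$ is the first Dirichlet eigenvalue of the Laplace--Beltrami operator. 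Summing over $w\in\{u,v\}$ produces
$$
\rho\,\frac{d}{d\rho}\log\Phi(\rho) \geq 2\bigl(\alpha(\omega_{u}(\rho))+\alpha(\omega_{v}(\rho))-2\bigr).
$$

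The Friedland--Hayman inequality asserts that for disjoint open subsets $\omega_{1},\omega_{2}\subset S^{n-1}$ one has $\alpha(\omega_{1})+\alpha(\omega_{2})\geq 2$. Because $u$ and $v$ have disjoint supports, $\omega_{u}(\rho)$ and $\omega_{v}(\rho)$ are disjoint, and the monotonicity of $\Phi$ follows. For the uniform bound, monotonicity reduces everything to estimating $\Phi(r_{0})$ for a single fixed $r_{0}\in(\tfrac{1}{2},1)$; a second integration by parts with the harmonic weight, combined with $w\, w_{\nu}\leq \tfrac{1}{2}\partial_{\nu}(w^{2})$, gives $J(w,r_{0})\leq C(n)\|w\|_{L^{2}(B_{1})}^{2}$ after averaging $r_{0}$ over an annulus, and therefore $\Phi(\rho)\leq \Phi(r_{0})\leq C(n)\|u\|_{L^{2}(B_{1})}^{2}\|v\|_{L^{2}(B_{1})}^{2}=:N$.

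\textbf{Main obstacle.} The delicate step will be justifying the Rellich-type integration by parts that dominates the bulk integral by the boundary integral: the distributional Laplacian of $w$ may concentrate on the free boundary $\partial\{w>0\}$, and one must argue that, since $w=0$ there, any such concentration gives only a nonnegative contribution that can be discarded. This is precisely where the hypotheses ``subharmonic when positive, nonnegative, and disjoint supports'' enter in an essential way; by contrast, the Friedland--Hayman inequality, which carries the true geometric content of the formula, will be quoted as a black box.
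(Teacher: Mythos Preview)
Your proposal is correct and follows the classical Alt--Caffarelli--Friedman argument (log-differentiation of $\Phi$, Rellich-type integration by parts against the harmonic weight, reduction to characteristic exponents on the sphere, and the Friedland--Hayman inequality). The paper itself does not give an independent proof of this lemma: it simply cites page 214 of \cite{caffarelli_geometric_2005} and Lemma~7(a) of \cite{caffarelli_geometry_2009}, which contain exactly the argument you outline, so your approach coincides with the one the paper defers to.
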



For the proof see page 214 on \cite{caffarelli_geometric_2005} and the proof of Lemma 7 (a) in \cite{caffarelli_geometry_2009}.

\section{Existence of Barriers}
\label{apd for lips: barriers}

\begin{lemma}
\label{lemma: barrier sub}
Given constants $0<\lambda< \Lambda$ and $M, r ,a,b, \rho \geq 0,$  $\frac{a\,r}{b}<r<\rho$ there exist a smooth function defined on  $B_{\rho} (0) \backslash B_{\frac{a\,r}{2b}}(0)$  and a constant $c=c(\alpha, a,b)$ and a universal constant $\alpha$ such that:

\begin{enumerate}
\label{lemma: barrier sub}
\item $\psi (x)= r\, M $ for $x \in \partial B_{\frac{a\,r}{b}}(0)$ 
\item $\psi (x) =0 $ for $x \in \partial B_{r}(0)$
\item $\puccin (\psi) \geq 0 $ for $x \in B_{r}(0) \backslash B_{\frac{a\,r}{2 b}}(0)$ 
\item $\frac{\partial \psi}{ \partial \nu} = c M $ when $x \in \partial B_{r}(0),$ and $c=-\alpha \frac{a^{\alpha}}{b^{\alpha}-a^{\alpha}}. $ 
\end{enumerate}
\end{lemma}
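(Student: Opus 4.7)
The plan is to construct $\psi$ as a radial function of the form
$$\psi(x) = A\,|x|^{-\alpha} + B,$$
for a universal exponent $\alpha>0$ and constants $A,B$ determined by the prescribed boundary values. Such power-type radial barriers are the natural candidates for the Pucci extremal operator because their Hessian has a single positive eigenvalue in the radial direction and $n-1$ equal negative eigenvalues in the tangential directions, so the value of $\puccin$ is easy to compute explicitly.

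First I would fix $\alpha$. For $f(s)=s^{-\alpha}$ the radial function $f(|x|)$ has Hessian eigenvalues $f''(|x|)=\alpha(\alpha+1)|x|^{-\alpha-2}>0$ (radial) and $f'(|x|)/|x|=-\alpha|x|^{-\alpha-2}<0$ (tangential, multiplicity $n-1$). By the definition of $\puccin$,
$$\puccin\!\left(|x|^{-\alpha}\right) = |x|^{-\alpha-2}\,\alpha\,\bigl[\lambda(\alpha+1)-\Lambda(n-1)\bigr],$$
which is $\geq 0$ as soon as $\alpha\geq \Lambda(n-1)/\lambda - 1$. Choose such a universal $\alpha=\alpha(n,\lambda,\Lambda)$.

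Next I would pick $A,B$ so that $\psi(r)=0$ and $\psi(ar/b)=rM$. This is a $2\times 2$ linear system whose solution is
$$A=\frac{a^{\alpha}}{b^{\alpha}-a^{\alpha}}\,r^{\alpha+1}M,\qquad B=-A\,r^{-\alpha},$$
and in particular $A>0$ since $a<b$. Hence $\psi$ is a positive multiple of $|x|^{-\alpha}$ plus a constant, so $\puccin(\psi)=\puccin(A|x|^{-\alpha})\geq 0$ throughout $B_{\rho}(0)\setminus B_{ar/(2b)}(0)$, giving (3). Conditions (1) and (2) hold by the choice of $A,B$, and smoothness is clear since the singularity at the origin is excluded from the domain.

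Finally, for (4) I differentiate in the radial direction. Since the outer normal on $\partial B_{r}(0)$ is the radial unit vector,
$$\frac{\partial \psi}{\partial\nu}\Big|_{|x|=r} = -\alpha\,A\,r^{-\alpha-1} = -\alpha\,\frac{a^{\alpha}}{b^{\alpha}-a^{\alpha}}\,M = cM,$$
which matches the formula for $c=c(\alpha,a,b)$ in the statement. There is no real obstacle here; the only point requiring (mild) care is checking the sign of $\puccin$ for the chosen $\alpha$, which is a direct computation once the eigenvalues of the Hessian of a radial function are written down.
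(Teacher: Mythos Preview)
Your proposal is correct and follows essentially the same approach as the paper: both construct the barrier as a radial function of the form $A|x|^{-\alpha}+B$, fix the universal exponent $\alpha\geq \frac{\Lambda(n-1)-\lambda}{\lambda}$ by computing the Hessian eigenvalues of $|x|^{-\alpha}$, and then determine $A,B$ from the two boundary conditions. The only cosmetic difference is that the paper first treats the normalized case $r=M=1$ and then rescales, whereas you solve directly for general $r$ and $M$; the resulting function and constants coincide.
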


\begin{proof}
Consider first $r=1$ and $M=1,$  we will  rescale afterwards. 
Let $\alpha, M_{2}>0,  $  $\alpha > n-2 $ and
$$
\varphi (x)= M_{1} + M_{2}\frac{1}{\abs{x}^{\alpha}}. 
$$
where  $M_{1},$ $M_{2}$ and $\alpha$ are such that the following conditions are satisfied:
\begin{enumerate}
\item $\varphi (x)= 0$ when $\abs{x}=1$\\
\item $\varphi (x)= 1$ when $\abs{x}=\frac{a}{b}<1$\\
\item $\puccin (\varphi) \geq 0$ in $B_{1}(0) \backslash B_{\frac{a}{2b}}(0)$
\end{enumerate}
In detail,
\begin{enumerate}
\item $\varphi (x)= 0$ when $\abs{x}=1$ $\Rightarrow$ $M_{1}=-M_{2}$ \\
\item $\varphi (x)= 1$ when $\abs{x}=\frac{a}{b}$ $\Rightarrow$   $M_{2}=\frac{1}{\left(\frac{a}{b}\right)^{\alpha}-1}= \frac{a^{\alpha}}{b^{\alpha}-a^{\alpha}}\geq 0$ \\
\end{enumerate}
and so have that,
$$
\varphi (x)= -M_{2} + M_{2}\frac{1}{\abs{x}^{\alpha}} \quad \mbox{with} \quad M_{2}=\frac{a^{\alpha}}{b^{\alpha}-a^{\alpha}}. 
$$
Note that if $ \frac{a}{b}$ is very small then $M_{2}$ is very small too.
On the other hand, the second derivatives of $\varphi$ are given by:
$$
\partial_{ij} \varphi (x ) = - \alpha M_{2} \abs{x}^{-\alpha -2} \delta_{ij}  -\alpha (-\alpha -2)M_{2} x_{i}x_{j}\abs{x}^{-\alpha -4}.
$$
Evaluating the Hessian of $\varphi$ at a point $(r, 0, \cdots, 0)$ one obtains:
\begin{eqnarray*}
\begin{split}
&\partial_{ij} \varphi  = 0 \quad i \neq j \\
&\partial_{11} \varphi = M_{2} \alpha (\alpha +1 )r^{-\alpha -2}\\
&\partial_{ii} \varphi = - \alpha M_{2} r^{-\alpha -2} \quad i>1\\
\end{split}
\end{eqnarray*}
And so by radial symmetry 
\begin{eqnarray*}
\begin{split}
&\partial_{ij} \varphi (x ) = 0 \quad i \neq j\\
&\partial_{11} \varphi (x ) = M_{2}\left(  \alpha (\alpha +1 )\right)\abs{x}^{-\alpha -2}\\
&\partial_{ii} \varphi (x ) = - \alpha M_{2}\abs{x}^{-\alpha -2} \quad i>1\\
\end{split}
\end{eqnarray*}
which implies that the Pucci extremal operator is given by
\begin{eqnarray*}
\begin{split}
\puccin (\varphi (x))&= M_{2}  \lambda  \alpha (\alpha +1 ) \abs{x}^{-\alpha -2} -\Lambda \, (n-1)\, \alpha M_{2} \abs{x}^{-\alpha -2}\\
&=  M_{2} \alpha\, \abs{x}^{-\alpha -2}  \left( \lambda (\alpha +1 ) -\Lambda \, (n-1) \right)
\end{split}
\end{eqnarray*}
In order to satisfy (3) one needs that:
$$
\alpha \geq \frac{\Lambda (n-1) - \lambda }{\lambda}
$$
which gives that
$$
\varphi (x)= -M_{2} + M_{2}\frac{1}{\abs{x}^{\alpha}} \quad \mbox{with } \: M_{2}=\frac{a^{\alpha}}{b^{\alpha}-a^{\alpha}} \:  \mbox{and} \quad  \alpha \geq \frac{\Lambda (n-1) - \lambda }{\lambda}. 
$$
Notice that  the normal derivative is:
$$
\dd{ \varphi }{\nu }(x) = -\alpha \, M_{2} \frac{1}{\abs{x}^{\alpha +1}}.
$$
And so when $r=\abs{x}=1,$
$$
\dd{ \varphi }{\nu }(x) = -\alpha \, M_{2}= - \alpha \frac{a^{\alpha}}{b^{\alpha}-a^{\alpha}} .
$$
Let $c= - \alpha \frac{a^{\alpha}}{b^{\alpha}-a^{\alpha}}. $\\
Now let u s consider a dilation and obtain the result for general $r $ and $M=1.$
Let $\tilde{\varphi}(x)= r \varphi(\frac{x}{r}).$ Then $\tilde{\varphi}$ defined on
$B_{r}(0) $ satisfies:
\begin{enumerate}
\item $\tilde{\varphi} (x)= 0$ when $\abs{x}=r$\\
\item $\tilde{\varphi} (x)= r$ when $\abs{x}=\frac{a\,r}{b}$\\
\item $\puccin (\tilde{\varphi}) \geq 0$ in $B_{r}(0) \backslash B_{\frac{a\,r}{2b}}(0)$
\item $\dd{ \tilde{\varphi} }{\nu }(x)= c $ when $\abs{x}=r$
\end{enumerate}
Finally for an arbitrary $M,$ let $\psi (x)= M \tilde{\varphi}(x).$
Now is easy to check that the barrier function $\psi$ satisfies:
\begin{enumerate}
\item $\psi (x)= 0$ when $\abs{x}=r$\\
\item $\psi (x)= r M$ when $\abs{x}=\frac{a\, r}{b}$\\
\item $\puccin (\psi) \geq 0$ in $B_{r}(0) \backslash B_{\frac{a\, r}{b}}(0)$
\item $\dd{ \psi }{\nu }(x)= c M$ when $\abs{x}=r$
\end{enumerate}
 \end{proof}

\begin{lemma}
\label{lemma: barrier super}
Given constants $0<\lambda< \Lambda$ and $M, r ,a,b, \rho \geq 0$,  $\frac{a}{b}r<r< \rho$ there exist a smooth function  defined on $B_{\rho} (0) \backslash B_{\frac{a\, r}{2b}}(0)$ and a constant $c=c(\alpha, a,b)$ and a universal constant $\alpha$ such that:

\begin{enumerate}
\item $\psi (x)=r\, M $ for $x \in \partial B_{r}(0)$ 
\item $\psi (x) =0 $ for $x \in \partial B_{r\frac{a}{b}}(0)$
\item $\puccip (\psi) \leq 0 $ for $x \in B_{r}(0) \backslash B_{r\frac{a}{2b}}(0)$ 
\item $\frac{\partial \psi}{ \partial \nu} = c M $ when $x \in \partial B_{r\frac{a}{b}}(0),$ where $c=\alpha \frac{1}{ \frac{a}{b}-\left( \frac{a}{b}\right)^{\alpha +1}}$
\end{enumerate}
\end{lemma}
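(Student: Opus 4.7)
The strategy is to mirror the construction in Lemma \ref{lemma: barrier sub}, but flipping the sign of the radial profile so that the radial direction contributes a negative eigenvalue (instead of a positive one) — this turns the "subsolution for $\puccin$" argument into a "supersolution for $\puccip$" argument. I first construct a model barrier on $B_{1}(0)\setminus B_{a/b}(0)$ in the normalized case $r = 1$, $M = 1$, and then rescale by $\psi(x) := rM\,\varphi(x/r)$ to obtain the general case.

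The natural ansatz is $\varphi(x) = A - B|x|^{-\alpha}$ with $A, B > 0$, which is strictly increasing in $|x|$ and so meets the correct boundary values on the two spheres. The conditions $\varphi = 0$ on $\partial B_{a/b}$ and $\varphi = 1$ on $\partial B_{1}$ pin down $B = a^{\alpha}/(b^{\alpha}-a^{\alpha})$ and $A = B(b/a)^{\alpha}$, leaving $\alpha > 0$ as a free parameter. By radial symmetry it suffices to diagonalize $D^{2}\varphi$ at $(\rho,0,\ldots,0)$: a computation identical to the one in the proof of Lemma \ref{lemma: barrier sub}, but with the opposite sign of $B$, yields one radial eigenvalue equal to $-\alpha(\alpha+1)B\rho^{-\alpha-2}$ and $n-1$ tangential eigenvalues equal to $\alpha B\rho^{-\alpha-2}$. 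Feeding these into the definition of $\puccip$ gives
$$
\puccip(\varphi)(x) = \alpha B |x|^{-\alpha-2}\bigl[\Lambda(n-1) - \lambda(\alpha+1)\bigr],
$$
which is $\leq 0$ as soon as $\alpha \geq \bigl(\Lambda(n-1)-\lambda\bigr)/\lambda$. I fix such a universal $\alpha = \alpha(n,\lambda,\Lambda)$, which settles property (3).

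For the normal derivative, differentiating $\varphi$ in the outward radial direction gives $\partial_{r}\varphi(x) = \alpha B |x|^{-\alpha-1}$, and at $|x| = a/b$ the explicit value of $B$ collapses after one line of algebra to $\alpha / \bigl(a/b - (a/b)^{\alpha+1}\bigr)$, which is exactly the constant $c$ in the statement. Finally, the rescaling $\psi(x) = rM\,\varphi(x/r)$ preserves all four properties: (1) and (2) by direct substitution; (3) by the homogeneity $D^{2}\psi(x) = (M/r)D^{2}\varphi(x/r)$ and the positive homogeneity of $\puccip$; and (4) by the chain rule, which gives $\partial_{r}\psi = M\,\partial_{r}\varphi(x/r) = cM$ on $\partial B_{ar/b}$. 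There is no serious obstacle: the only step that demands care is verifying the precise form of $c$, and that is a routine but non-trivial algebraic simplification — everything else is a mirror image of Lemma \ref{lemma: barrier sub}.
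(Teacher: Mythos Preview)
Your proposal is correct and follows essentially the same approach as the paper: the same ansatz $\varphi(x)=A-B|x|^{-\alpha}$, the same Hessian computation yielding $\puccip(\varphi)=\alpha B|x|^{-\alpha-2}[\Lambda(n-1)-\lambda(\alpha+1)]$, the same choice $\alpha\geq(\Lambda(n-1)-\lambda)/\lambda$, and the same rescaling $\psi(x)=rM\varphi(x/r)$. The only cosmetic difference is that the paper splits the rescaling into two steps (first $r$, then $M$) and writes $A=1+B$ rather than your equivalent $A=B(b/a)^{\alpha}$.
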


\begin{proof}
Consider first $r=1$  and $M=1,$ we will rescale afterwards.
Let $\alpha, M_{2}>0$   $\alpha > n-2 $ and
$$
\varphi (x)= M_{1} - M_{2}\frac{1}{\abs{x}^{\alpha}}. 
$$
where  $M_{1},$ $M_{2}$ and $\alpha$ are such that the following conditions are satisfied:
\begin{enumerate}
\item $\varphi (x)= 0$ when $\abs{x}=\frac{a}{b}$\\
\item $\varphi (x)= 1$ when $\abs{x}=1$\\
\item $\puccip (\varphi) \leq 0$ in $B_{1}(0) \backslash B_{\frac{a}{2 b}}(0)$
\end{enumerate}
In detail, (1) and (2) imply that
 $$
 M_{1}=1+M_{2} 
 \quad \mbox{and }  M_{2}=\frac{a^{\alpha}}{ b^{\alpha}-a^{\alpha}}
 $$ 
and so have that,
$$
\varphi (x)= 1+M_{2} - M_{2}\frac{1}{\abs{x}^{\alpha}} \quad \mbox{with} \quad M_{2}=\frac{a^{\alpha}}{ b^{\alpha}-a^{\alpha}} . 
$$
On the other hand, the second derivatives of $\varphi$ are given by:
$$
\partial_{ij} \varphi (x ) = \alpha M_{2} \abs{x}^{-\alpha -2} \delta_{ij}  + \alpha (-\alpha -2)M_{2} x_{i}x_{j}\abs{x}^{-\alpha -4}.
$$
Evaluating the Hessian of $\varphi$ at a point $(r, 0, \cdots, 0)$ one obtains:
\begin{eqnarray*}
\begin{split}
&\partial_{ij} \varphi  = 0 \quad i \neq j \\
&\partial_{11} \varphi = - M_{2} \alpha (\alpha +1 )r^{-\alpha -2}\\
&\partial_{ii} \varphi =  \alpha M_{2} r^{-\alpha -2} \quad i>1\\
\end{split}
\end{eqnarray*}
And so by radial symmetry 
\begin{eqnarray*}
\begin{split}
&\partial_{ij} \varphi (x ) = 0 \quad i \neq j\\
&\partial_{11} \varphi (x ) = - M_{2} \alpha (\alpha +1 ) \abs{x}^{-\alpha -2}\\
&\partial_{ii} \varphi (x ) = \alpha M_{2}\abs{x}^{-\alpha -2} \quad i>1\\
\end{split}
\end{eqnarray*}
which implies that the Pucci extremal operator is given by
\begin{eqnarray*}
\begin{split}
\puccip (\varphi (x))&=  \Lambda \, (n-1)\, \alpha M_{2} \abs{x}^{-\alpha -2}- \lambda  M_{2}   \alpha (\alpha +1 ) \abs{x}^{-\alpha -2}\\
&=  M_{2} \alpha\, \abs{x}^{-\alpha -2}  \left( \Lambda \, (n-1) -\lambda (\alpha +1 ) \right)
\end{split}
\end{eqnarray*}
In order to satisfy (3) one needs that:
$$
\Lambda \, (n-1) -\lambda (\alpha +1 ) \leq 0       \Leftrightarrow      \alpha \geq \frac{\Lambda (n-1) - \lambda }{\lambda}
$$
which gives that
$$
\varphi (x)= 1+M_{2} - M_{2}\frac{1}{\abs{x}^{\alpha}} \quad \mbox{with } \: M_{2}=\frac{a^{\alpha}}{ b^{\alpha}-a^{\alpha}}\:  \mbox{and} \quad  \alpha \geq \frac{\Lambda (n-1) - \lambda }{\lambda}. 
$$
Notice that  the normal derivative is:
$$
\dd{ \varphi }{\nu }(x) = \alpha \, M_{2} \frac{1}{\abs{x}^{\alpha +1}}.
$$
And so when $\abs{x}=\frac{a}{b},$
$$
\dd{ \varphi }{\nu }(x) = \alpha \, M_{2} \frac{1}{\left(\frac{a}{b}\right)^{\alpha+1}}=\frac{\alpha}{        \frac{a}{b}-\left( \frac{a}{b}\right)^{\alpha +1}  } .   $$
Let $c=\frac{\alpha}{        \frac{a}{b}-\left( \frac{a}{b}\right)^{\alpha +1} }  . $\\
Now let us consider a dilation and obtain the result for general $r $ and $M=1.$
Let $\tilde{\varphi}(x)= r \varphi(\frac{x}{r}).$ Then $\tilde{\varphi}$ defined on
$B_{r}(0) $ satisfies:
\begin{enumerate}
\item $\tilde{\varphi} (x)= 0$ when $\abs{x}=r\frac{a}{b}$,\\
\item $\tilde{\varphi} (x)= r$ when $\abs{x}=r$,\\
\item $\puccip (\tilde{\varphi}) \leq 0$ in $B_{r}(0) \backslash B_{r\frac{a}{2b}}(0)$,
\item $\dd{ \tilde{\varphi} }{\nu }(x)= c $ when $\abs{x}=r \frac{a}{b}.$
\end{enumerate}
Finally for an arbitrary $M,$ let $\psi (x)= M \tilde{\varphi}(x).$
Now, it is easy to check that the barrier function $\psi$    satisfies
\begin{enumerate}
\item $\psi (x)= 0$ when $\abs{x}=r\frac{a}{b}$,\\
\item $\psi (x)= r M$ when $\abs{x}=r$,\\
\item $\puccip (\psi) \leq 0$ in $B_{r}(0) \backslash B_{r\frac{a}{b}}(0),$
\item $\dd{ \psi }{\nu }(x)= c M$ when $\abs{x}=r\frac{a}{b}.$
\end{enumerate}
 \end{proof}


\bibliography{articlearxivevquitalo}
\bibliographystyle{plain}

\end{document}